\documentclass[11pt,a4paper]{article}

\usepackage{amssymb,amsmath,amsthm}
\usepackage[all]{xy}

\sloppy

\headheight=5mm
\topmargin=0mm
\oddsidemargin=4.6mm
\textheight=210mm
\textwidth=150mm

\theoremstyle{plain}
\newtheorem{thm}{Theorem}[section]
\newtheorem{prop}[thm]{Proposition}
\newtheorem{lem}[thm]{Lemma}
\newtheorem{cor}[thm]{Corollary}

\theoremstyle{definition}
\newtheorem{dfn}[thm]{Definition}
\newtheorem{rem}[thm]{Remark}

\numberwithin{equation}{section}

\makeatletter
\renewenvironment{proof}[1][\proofname]{\par
  \pushQED{\qed}%
  \normalfont \topsep6\p@\@plus6\p@\relax
  \trivlist
  \item[\hskip\labelsep
	\bfseries
    #1\@addpunct{.}]\ignorespaces
}{%
  \popQED\endtrivlist\@endpefalse
}
\makeatother

\DeclareMathOperator{\Hom}{Hom}
\DeclareMathOperator{\End}{End}
\DeclareMathOperator{\Ker}{Ker}
\DeclareMathOperator{\Ima}{Im}
\DeclareMathOperator{\ev}{ev}

\title{Affine Yangian action on the Fock space}

\author{Ryosuke Kodera}

\date{}

\makeatletter
\let\@old@@maketitle=\@maketitle
\def\@maketitle{%
\footnotetext{%
\hspace*{-1em}\hspace*{-\footnotesep}%
Research Institute for Mathematical Sciences, Kyoto University, Kyoto 606-8502, Japan.\\
E-mail address: kodera@kurims.kyoto-u.ac.jp
}
\@old@@maketitle
}
\makeatother

\begin{document}
\maketitle

\begin{abstract}
The localized equivariant homology of the quiver variety of type $A_{N-1}^{(1)}$ can be identified with the level one Fock space by assigning a normalized torus fixed point basis to certain symmetric functions, Jack($\mathfrak{gl}_N$) symmetric functions introduced by Uglov.
We show that this correspondence is compatible with actions of two algebras, the Yangian for $\mathfrak{sl}_N$ and the affine Lie algebra $\hat{\mathfrak{sl}}_N$, on both sides.
Consequently we obtain affine Yangian action on the Fock space.
\end{abstract}

\section{Introduction}

Yangian associated with an arbitrary semisimple Lie algebra was introduced by Drinfeld \cite{MR802128} as a one-parameter deformation of the universal enveloping algebra of the current Lie algebra.
Later Drinfeld \cite{MR914215} gave a new presentation for Yangians which is suitable to develop highest weight theory for their representations.

Drinfeld's new presentation is applicable to the data of the Cartan matrix of any symmetrizable Kac-Moody Lie algebra.
We then obtain Yangians associated with Cartan matrices of affine type as particular cases.
We call them affine Yangians.
By this definition affine Yangian may be regarded as a degeneration of quantum toroidal algebra.
In the case of affine type A, associated Yangian involves two parameters and the defining relations were first given by Guay \cite{MR2199856}.

Such a generalization of Yangian seems to be natural from the theory of quiver varieties.
Varagnolo \cite{MR1818101} constructed an action of the Yangian for a symmetric Kac-Moody Lie algebra on the sum of the equivariant homology groups of quiver varieties.
An original motivation of Nakajima \cite{MR1302318,MR1604167} to introduce quiver varieties comes from his study of moduli spaces of instantons on ALE spaces with Kronheimer \cite{MR1075769}.
In the original situation, the corresponding quivers are of affine type, therefore quantum toroidal algebras and affine Yangians have particular importance from this viewpoint.
The appearance of two parameters in the affine type A case can be explained as the quiver variety of affine type A admits an action of the two-dimensional torus.

Study of the representation theory of affine Yangians has not been done sufficiently.
Guay \cite{MR2199856} established a Schur-Weyl type duality between the affine Yangian and the trigonometric double affine Hecke algebra.
An action of the affine Yangian on the equivariant cohomology of the affine Laumon space was constructed by Feigin-Finkelberg-Negut-Rybnikov \cite{MR2827177}.

Our aim is to study the action constructed by Varagnolo more explicitly.
The localized equivariant homology of the quiver variety of type $A_{N-1}^{(1)}$ has a basis consisting of the classes of the torus fixed points parametrized by partitions.
We can derive an explicit formula for the action of the affine Yangian on the fixed point basis (Proposition~\ref{prop:formula} and Theorem~\ref{thm:formula}).
Our main result relates this affine Yangian module to the level one Fock space.

We turn to explain Yangian action on the Fock space due to Uglov \cite{MR1724950}.
Takemura-Uglov \cite{MR1457972} used a result of Nazarov-Tarasov \cite{MR1605817} to study Yangian symmetry of the spin Calogero-Sutherland model.
They described the decomposition of the space $F_n$ of states of the spin $N$ Calogero-Sutherland model with $n$ particles as a module over the Yangian for $\mathfrak{gl}_N$.
Modules appearing in the decomposition belong to a class studied in \cite{MR1605817} and in particular they have remarkable bases, called Gelfand-Zetlin bases.
The level one Fock space is realized as a certain limit $n \to \infty$ of $F_n$, and Uglov \cite{MR1724950} constructed an action of the Yangian for $\mathfrak{gl}_N$ on the Fock space.
The limit of the Gelfand-Zetlin bases form an orthogonal basis of the Fock space, which Uglov called Jack($\mathfrak{gl}_N$) symmetric functions since these functions reduce to the Jack symmetric functions when $N=1$.
We give an explicit formula for the actions of the Drinfeld generators of the Yangian for $\mathfrak{sl}_N$ on the basis of Jack($\mathfrak{gl}_N$) symmetric functions in Theorem~\ref{thm:main1}.

We can ask whether the actions of the Yangian for ${\mathfrak{sl}}_N$ and the affine Lie algebra $\hat{\mathfrak{sl}}_N$ on the Fock space can be glued and extended to an action of the affine Yangian of type $A_{N-1}^{(1)}$.
In fact, this has been settled for the nondegenerate case, that is, for the quantum toroidal algebra by Varagnolo-Vasserot \cite{MR1626481} and Saito-Takemura-Uglov \cite{MR1603798}, both based on a work of Takemura-Uglov \cite{MR1600311}.
Hence the quantum toroidal algebra has been known to act on the level one $q$-Fock space.
Moreover Nagao \cite{MR2583334} proved that this module is isomorphic to the equivariant $K$-group of the quiver variety, where an action of the quantum toroidal algebra on the latter is due to Varagnolo-Vasserot \cite{MR1722361}.

In this paper, we give an affirmative answer for the affine Yangian case by a different approach.
Since we know that the affine Yangian acts on the localized equivariant homology of the quiver variety, the Yangian for ${\mathfrak{sl}}_N$ and the affine Lie algebra $\hat{\mathfrak{sl}}_N$ act on the same space.
Hence it is enough to construct an isomorphism between the localized equivariant homology and the Fock space which is compatible with the actions of two algebras.
A candidate is found by norm formulas for the bases on both sides.
The norm of Jack($\mathfrak{gl}_N$) symmetric function has been known and it turns out to be exactly the same as that of the normalized basis element corresponding to the torus fixed point of the quiver variety of type $A_{N-1}^{(1)}$.
Our main result is Theorem~\ref{thm:main2}, where we show that the assignment of the normalized fixed point basis to the Jack($\mathfrak{gl}_N$) symmetric functions satisfies the desired properties.

\begin{thm}[Theorem~\ref{thm:main2}]
The assignment
\[
b_{\lambda} = (-1)^{\varepsilon_{\lambda}} \left( \displaystyle \prod_{\substack{s \in \lambda\\ h_{\lambda}(s) \equiv 0 \bmod N}} \dfrac{1}{\varepsilon_1 (l_{\lambda}(s) + 1) - \varepsilon_2 a_{\lambda}(s)} \right) [\lambda] \mapsto P_{\lambda}
\]
of the normalized torus fixed point basis to the Jack$(\mathfrak{gl}_N)$ symmetric functions gives
\begin{itemize}
\item a $\mathbb{C}(\varepsilon_1, \varepsilon_2)$-linear isometry,
\item an isomorphism of modules over the Yangian $Y_{\hbar}(\mathfrak{sl}_N)$,
\item an isomorphism of modules over the affine Lie algebra $\hat{\mathfrak{sl}}_N$
\end{itemize}
between the localized equivariant homology $H =\bigoplus_{{\bf v}} H_*^{T}(\mathfrak{M}({\bf v})) \otimes_{\mathbb{C}[\varepsilon_1, \varepsilon_2]} \mathbb{C}(\varepsilon_1, \varepsilon_2)$ of the quiver variety of type $A_{N-1}^{(1)}$ and the Fock space $F$.
\end{thm}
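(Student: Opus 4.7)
The plan is to verify each of the three compatibility statements—isometry, $Y_\hbar(\mathfrak{sl}_N)$-linearity, and $\hat{\mathfrak{sl}}_N$-linearity—by direct comparison of the explicit formulas available on both sides. The map is $\mathbb{C}(\varepsilon_1,\varepsilon_2)$-linear by definition and sends a basis to a basis, so it is automatically a linear isomorphism; all the content lies in the three compatibilities.

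For the isometry, the localized equivariant pairing of $[\lambda]$ and $[\mu]$ is computed by the Atiyah--Bott localization formula: it vanishes unless $\lambda=\mu$, and $\langle [\lambda],[\lambda]\rangle$ equals (up to sign) the reciprocal of the equivariant Euler class of the tangent space at the fixed point indexed by $\lambda$, which is a product of factors of the form $\varepsilon_1(l_\lambda(s)+1)-\varepsilon_2 a_\lambda(s)$ and $-\varepsilon_1 l_\lambda(s)+\varepsilon_2(a_\lambda(s)+1)$ over all $s\in\lambda$. Uglov's norm formula for $\langle P_\lambda,P_\lambda\rangle$ involves a similar product but restricted to boxes with $h_\lambda(s)\equiv 0\pmod N$. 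The factors over boxes with $h_\lambda(s)\not\equiv 0\pmod N$ cancel between the Euler class and the square of the normalizing product built into $b_\lambda$, and the surviving factors are exactly the ones appearing in Uglov's formula; this gives the isometry.

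For the Yangian statement, one compares Theorem~\ref{thm:formula}, which computes the action of the Drinfeld generators of $Y_\hbar(\mathfrak{sl}_N)$ on the fixed point basis $[\lambda]$, with Theorem~\ref{thm:main1}, which does the same on $P_\lambda$. Both actions are supported on partitions $\mu$ obtained from $\lambda$ by adding or removing a single box of prescribed residue modulo $N$, and the matrix coefficients are rational functions in $\varepsilon_1,\varepsilon_2$ built from arm, leg, and hook data at this box and at boxes in its row and column. Rescaling $[\lambda]\mapsto b_\lambda$ multiplies each matrix coefficient by the ratio of normalizing products of $\lambda$ and $\mu$; after this rescaling, and once the sign $(-1)^{\varepsilon_\lambda}$ has absorbed the sign discrepancy between the two conventions, the two formulas agree term by term on the Drinfeld generators. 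Since both algebra actions are determined by their effect on the basis, this suffices.

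The main obstacle is the third compatibility, with $\hat{\mathfrak{sl}}_N$, because the Chevalley generators $e_i,f_i$ are not directly among the Drinfeld generators handled in the previous step. On the quiver variety side they are realized via Nakajima's Hecke correspondences coming from the embedding $\hat{\mathfrak{sl}}_N\hookrightarrow$ affine Yangian, while on the Fock space side they act by the standard level one recipe of adding or removing a box of residue $i$ with a combinatorial Young-diagram sign. After transporting both actions through $b_\lambda\mapsto P_\lambda$, they are supported on the same partitions, and matching the coefficients reduces to a signed rational identity in hook data. The delicate point is that the sign $(-1)^{\varepsilon_\lambda}$ must be chosen precisely so as to convert Nakajima's sign convention into Uglov's; with the definition of $\varepsilon_\lambda$ already fixed, this is a short but careful verification, after which the theorem follows.
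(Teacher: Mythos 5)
Your first two bullets follow the paper's strategy, but with two inaccuracies in the isometry step: the pairing $\langle[\lambda],[\lambda]\rangle_H$ equals $(-1)^{\frac12\dim\mathfrak{M}(\mathbf{v})}\,e(T_\lambda\mathfrak{M}(\mathbf{v}))$, not its reciprocal, and the tangent space $T_\lambda\mathfrak{M}(\mathbf{v})$ is already a sum only over cells with $h_\lambda(s)\equiv 0\bmod N$, so there are no ``non-divisible hook'' factors to cancel against the normalizer --- if the Euler class really were a product over all of $\lambda$, your claimed cancellation would fail, since the normalizing product built into $b_\lambda$ contains only divisible-hook factors. In the Yangian step you should also record that the term-by-term comparison of Theorem~\ref{thm:main1} with Theorem~\ref{thm:formula} requires the shift $u\mapsto u-\frac12 i(\varepsilon_1-\varepsilon_2)$ identifying the Drinfeld generators $X_{i,r}^{\pm}$ of $Y_\hbar(\mathfrak{sl}_N)$ inside the affine Yangian (Remark~\ref{rem:Guay}); this is exactly what converts $(\varepsilon_1 x+\varepsilon_2 y)^r$ into $\bigl(\varepsilon_1(x+\frac12 i)+\varepsilon_2(y-\frac12 i)\bigr)^r$, and without it the two formulas do not literally agree.

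The genuine gap is in the third bullet. You propose to match the coefficients of the transported geometric action of $x_{0,0}^{\pm}$ against the standard Chevalley action on $F$ by ``a signed rational identity in hook data.'' But the standard action (\ref{eq:affine1})--(\ref{eq:affine3}) is explicit only on the Schur basis $s_\lambda$, and for $i=0$ there is no formula for $X_0^{\pm}P_\lambda$ in the $P_\mu$ basis (Theorem~\ref{thm:main1} covers only $i=1,\dots,N-1$); the transition matrix between $\{P_\lambda\}$ and $\{s_\lambda\}$ is not explicit, so one side of the identity you want to verify cannot be written down. The paper circumvents this with a specialization argument: the subalgebra $\langle x_{i,0}^{\pm},h_{i,0}\rangle$ of the affine Yangian defined over $\mathbb{C}[\varepsilon_1,\varepsilon_2]$ is undeformed under $\varepsilon_1=-\varepsilon_2$, the coefficients in Theorem~\ref{thm:formula} for $r=0$ all become $1$ at that specialization, and $P_\lambda$ degenerates to $s_\lambda$ there, so the comparison can be carried out at $\hbar=0$ where both sides are explicit. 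That specialization is the missing idea; without it (or some substitute) your third step does not go through.
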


\begin{cor}[Corollary~\ref{cor:cor}]
The actions of the Yangian $Y_{\hbar}(\mathfrak{sl}_N)$ and the affine Lie algebra $\hat{\mathfrak{sl}}_N$ on the Fock space $F$ can be uniquely extended to an action of the affine Yangian $Y_{\varepsilon_1, \varepsilon_2}(\hat{\mathfrak{sl}}_N)$.
\end{cor}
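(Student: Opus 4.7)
The plan is to deduce the corollary from Theorem~\ref{thm:main2} combined with the affine Yangian action on the localized equivariant homology $H$ coming from Varagnolo's construction. For existence, I would transport the affine Yangian action from $H$ to $F$ along the isomorphism $\Phi \colon H \xrightarrow{\sim} F$ of Theorem~\ref{thm:main2}. Since Varagnolo's $Y_{\varepsilon_1,\varepsilon_2}(\hat{\mathfrak{sl}}_N)$-action on $H$ restricts, by construction, to the actions of the subalgebras $Y_\hbar(\mathfrak{sl}_N)$ and $\hat{\mathfrak{sl}}_N$ on $H$, and since $\Phi$ intertwines both of these subalgebra actions with the known ones on $F$, the transported action on $F$ automatically extends the given actions. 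This settles existence.

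Uniqueness reduces to the purely algebraic statement that $Y_\hbar(\mathfrak{sl}_N)$ and $\hat{\mathfrak{sl}}_N$ together generate $Y_{\varepsilon_1,\varepsilon_2}(\hat{\mathfrak{sl}}_N)$ as an algebra. In Drinfeld's new realization the affine Yangian is generated by currents $x_{i,r}^\pm, h_{i,r}$ with $i \in \mathbb{Z}/N\mathbb{Z}$ and $r \geq 0$. All such generators with $i \neq 0$ already lie in $Y_\hbar(\mathfrak{sl}_N)$, and those with $r = 0$ (including $i = 0$) lie in $\hat{\mathfrak{sl}}_N$. The remaining currents at the affine node, $x_{0,r}^\pm$ and $h_{0,r}$ for $r \geq 1$, I would produce inductively via commutators of the form $[h_{i,1}, x_{0,0}^\pm]$ and $[x_{i,1}^\pm, x_{0,0}^\mp]$ with $i = 1$ or $i = N-1$ adjacent to the affine node in the Dynkin diagram; the affine Yangian relations pin these brackets down in terms of the $x_{0,r}^\pm$, allowing an induction on $r$.

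The main obstacle I anticipate is not existence, which is essentially formal given Theorem~\ref{thm:main2}, but rather executing the generation step in the uniqueness argument: one must verify that the inductive commutator construction genuinely produces $x_{0,r}^\pm$ and $h_{0,r}$ at each step with the correct leading coefficient, rather than merely an element congruent to them modulo lower-degree terms. This is an internal calculation in $Y_{\varepsilon_1,\varepsilon_2}(\hat{\mathfrak{sl}}_N)$, independent of any module structure, and should be readable off Guay's defining relations.
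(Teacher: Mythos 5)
Your proposal is correct and is essentially the paper's own proof: existence is obtained by transporting Varagnolo's affine Yangian action on $H$ to $F$ through the isomorphism of Theorem~\ref{thm:main2}, and uniqueness is reduced to the statement that $\langle x_{i,r}^{\pm}, h_{i,r} \mid i \neq 0 \rangle$ and $\langle x_{i,0}^{\pm}, h_{i,0} \mid i \in \mathbb{Z}/N\mathbb{Z} \rangle$ generate $Y_{\varepsilon_1,\varepsilon_2}(\hat{\mathfrak{sl}}_N)$, a fact the paper asserts without proof, so your commutator sketch is if anything more detailed. Note only that your induction works as written for $N \geq 3$, where (\ref{eq:6})--(\ref{eq:9}) together with $[h_{1,0}, x_{0,s+1}^{\pm}] = \mp x_{0,s+1}^{\pm}$ let you solve for $x_{0,s+1}^{\pm}$ with nonzero leading coefficient $a_{1,0} = -1$; for $N = 2$ the analogous relation (\ref{eq:17}) is second order in the shift, so the induction would need both $x_{0,0}^{\pm}$ and $x_{0,1}^{\pm}$ as seeds and hence a separate argument (e.g.\ via the degree filtration whose associated graded is a current algebra) --- a gap your writeup shares with the paper, which likewise does not justify the generation claim in this case.
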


The main result of this paper can be regarded as a degenerate analog of that of Nagao \cite{MR2583334} as explained.
The idea of comparing distinguished bases on both sides follows the paper \cite{MR2583334}, while an approach for actual calculation of the action on the basis of the Fock space is different.
We do not know whether our result can be derived directly from the quantum toroidal case by a certain degenerate procedure.

We note that a relation between the basis corresponding to the torus fixed points of the quiver variety of type $A_{N-1}^{(1)}$ and Uglov's Jack($\mathfrak{gl}_N$) symmetric functions has been already mentioned by Belavin-Bershtein-Tarnopolsky \cite{MR3046746}.

This paper is organized as follows.
In Section~\ref{sec:Uglov}, we review the Yangians associated with $\mathfrak{gl}_N$ and $\mathfrak{sl}_N$, and recall Uglov's construction of an action of the Yangian for $\mathfrak{gl}_N$ on the level one Fock space.
We also introduce Jack($\mathfrak{gl}_N$) symmetric functions, a distinguished basis of the Fock space.
In Section~\ref{sec:explicit}, we give an explicit formula for the actions of the Drinfeld generators of the Yangian for $\mathfrak{sl}_N$ on the Jack($\mathfrak{gl}_N$) symmetric functions using the theory of Gelfand-Zetlin basis developed by Nazarov-Tarasov.
In Section~\ref{sec:quiver}, we recall basic facts on the quiver variety of affine type A, including a description of the torus fixed points and the corresponding basis for the localized equivariant homology.
Then we make an observation that they can be identified with the basis of the Jack($\mathfrak{gl}_N$) symmetric functions for the Fock space after a normalization.
The affine Yangian and its action on the localized equivariant homology are recalled in Section~\ref{sec:Fock}.
We calculate the action explicitly in terms of the fixed point basis.
An argument on a normalization of signs in the formula is given.
Then comparing the actions of the Yangian for $\mathfrak{sl}_N$ and the affine Lie algebra $\hat{\mathfrak{sl}}_N$ on the Fock space and on the localized  equivariant homology, we conclude that the identification of them is compatible with those actions.
Consequently, we obtain affine Yangian action on the Fock space.

\subsection*{Acknowledgments}

The author is grateful to Hiraku Nakajima who taught him many things on quiver varieties.
His explanation of Jack symmetric functions was also helpful.
The author would like to thank Shintarou Yanagida for his interest and various discussions, which especially improved the understanding of the sign normalization of the fixed point basis.
He also thanks Yoshihisa Saito and Katsuyuki Naoi for discussions and comments.

The author is supported by Research Fellowships of the Japan Society for the Promotion of Science for Young Scientists.

\section{Uglov's construction of Yangian action}\label{sec:Uglov}

\subsection{Yangian for $\mathfrak{gl}_N$ and $\mathfrak{sl}_N$}

We introduce the Yangians associated with $\mathfrak{gl}_N$ and $\mathfrak{sl}_N$, and then recall a relation between them.
A standard reference is Molev's book \cite{MR2355506}.
We work over the field $\mathbb{C}(\varepsilon_1, \varepsilon_2)$ of rational functions in two parameters throughout the paper.

\begin{dfn}\label{dfn:gl}
Let $\hbar \in \mathbb{C}(\varepsilon_1, \varepsilon_2)$ be a parameter.
The Yangian $Y_{\hbar}(\mathfrak{gl}_N)$ is the algebra over $\mathbb{C}(\varepsilon_1, \varepsilon_2)$ generated by $T_{ij}^{(r)}$ $(1 \leq i,j \leq N, r \in \mathbb{Z}_{\geq 1})$ subject to the relations:
\[
[T_{ij}^{(r)}, T_{kl}^{(s)}] = \delta_{kj} T_{il}^{(r+s-1)} - \delta_{il} T_{kj}^{(r+s-1)} + \hbar \sum_{a=2}^{\min\{r,s\}} ( T_{kj}^{(a-1)} T_{il}^{(r+s-a)} - T_{kj}^{(r+s-a)} T_{il}^{(a-1)}).
\]
\end{dfn}

\begin{rem}
The notation $T_{ij}^{(r)}$ used by Nazarov-Tarasov~\cite{MR1605817} and Uglov~\cite{MR1724950} corresponds to our $T_{ji}^{(r)}$.
\end{rem}

Consider formal series
\[
T_{ij}(u) = \delta_{ij} + \hbar \sum_{r \geq 1} T_{ij}^{(r)} u^{-r}
\]
in $Y_{\hbar}(\mathfrak{gl}_N)[[u^{-1}]]$.
Then the above relation is equivalent to
\[
(u - v)[T_{ij}(u), T_{kl}(v)] = \hbar (T_{kj}(u) T_{il}(v) - T_{kj}(v) T_{il}(u)).
\]
A coproduct on $Y_{\hbar}(\mathfrak{gl}_N)$ is given by
\[
\Delta(T_{ij}(u)) = \sum_{k=1}^{N} T_{ik}(u) \otimes T_{kj}(u).
\]
We regard $U(\mathfrak{gl}_N)$ as a subalgebra of $Y_{\hbar}(\mathfrak{gl}_N)$ via
\[
E_{ij} \mapsto T_{ij}^{(1)}.
\]

There exists the algebra homomorphism $\ev_a$ for each $a \in \mathbb{C}(\varepsilon_1, \varepsilon_2)$ defined by
\begin{align*}
\ev_a \colon Y_{\hbar}(\mathfrak{gl}_N) &\to U(\mathfrak{gl}_N) \otimes \mathbb{C}(\varepsilon_1, \varepsilon_2) \\
T_{ij}^{(r)} &\mapsto a^{r-1} E_{ij}.
\end{align*}
Let $\varpi_1, \ldots, \varpi_N$ be the fundamental weights of $\mathfrak{gl}_N$ and $V(\varpi_i)$ the corresponding simple module.
We define the fundamental module $V(\varpi_i)_a$ of $Y_{\hbar}(\mathfrak{gl}_N)$ as the pullback of $V(\varpi_i)$ by $\ev_a$.
Let
\[
f(u) = 1 + \hbar \sum_{r \geq 1} f_r u^{-r} \in \mathbb{C}(\varepsilon_1, \varepsilon_2)[[u^{-1}]]
\]
be a formal power series in $u^{-1}$.
The map
\[
\omega_{f} \colon T_{ij}(u) \mapsto f(u) T_{ij}(u)
\]
defines an algebra automorphism of $Y_{\hbar}(\mathfrak{gl}_N)$.
Thus we can consider the pullback $\omega_{f}^* V$ for any $Y_{\hbar}(\mathfrak{gl}_N)$-module $V$.

\begin{dfn}\label{dfn:sl}
Let $\hbar \in \mathbb{C}(\varepsilon_1, \varepsilon_2)$ be a parameter.
The Yangian $Y_{\hbar}(\mathfrak{sl}_N)$ is the algebra over $\mathbb{C}(\varepsilon_1, \varepsilon_2)$ generated by $X_{i,r}^{\pm}, H_{i,r}$ $(1 \leq i \leq N-1, r \in \mathbb{Z}_{\geq 0})$ subject to the relations:
\[
[H_{i,r}, H_{j,s}] = 0,
\]
\[
[X_{i,r}^{+}, X_{j,s}^{-}] = \delta_{ij} H_{i, r+s},
\]
\[
[H_{i,0}, X_{j,s}^{\pm}] = \pm a_{ij}X_{j,s}^{\pm},
\]
\[
[H_{i,r+1}, X_{j, s}^{\pm}] - [H_{i, r}, X_{j, s+1}^{\pm}] = \pm \dfrac{1}{2}\hbar a_{ij} (H_{i, r}X_{j, s}^{\pm} + X_{j, s}^{\pm}H_{i, r}),
\]
\[
[X_{i,r+1}^{\pm}, X_{j, s}^{\pm}] - [X_{i, r}^{\pm}, X_{j, s+1}^{\pm}] = \pm \dfrac{1}{2}\hbar a_{ij} (X_{i, r}^{\pm}X_{j, s}^{\pm} + X_{j, s}^{\pm}X_{i, r}^{\pm}),
\]
\[
\sum_{w \in \mathfrak{S}_{1-a_{ij}}} [X_{i,r_{w(1)}}^{\pm}, [X_{i,r_{w(2)}}^{\pm}, \dots, [X_{i,r_{w(1 - a_{ij})}}^{\pm}, X_{j,s}^{\pm}]\dots]] = 0 \ (i \neq j), 
\]
where
\[
a_{ij} =
\begin{cases}
2  \text{ if } i=j, \\
-1 \text{ if } i=j \pm 1, \\
0  \text{ otherwise.}
\end{cases}
\]
\end{dfn}

The subalgebra of $Y_{\hbar}(\mathfrak{sl}_N)$ generated by $X_{i,0}^{\pm}, H_{i,0}$ $(1 \leq i \leq N-1)$ is isomorphic to $U(\mathfrak{sl}_N)$.
The generators $X_{i,0}^{\pm}, H_{i,0}$ coincide with the standard Chevalley generators $X_{i}^{\pm}, H_{i}$ of $\mathfrak{sl}_N$.

For sequences $(i_1, \ldots, i_m)$ and $(j_1, \ldots, j_m)$ in $\{1, \ldots, N\}$, define the quantum minor $T_{(i_1, \ldots, i_m),(j_1, \ldots, j_m)} (u)$ by
\begin{align*}
& T_{(i_1, \ldots, i_m),(j_1, \ldots, j_m)}(u)\\
&= \sum_{w \in \mathfrak{S}_m} (-1)^{l(w)} T_{i_{w(1)}, j_1}(u) T_{i_{w(2)}, j_2}(u-\hbar) \cdots T_{i_{w(m)}, j_m}(u - \hbar(m-1)).
\end{align*}
It is known that
\[
T_{(i_1, \ldots, i_m),(j_1, \ldots, j_m)}(u) = \sum_{w \in \mathfrak{S}_m} (-1)^{l(w)} T_{i_1, j_{w(1)}}(u - \hbar(m-1)) \cdots T_{i_m, j_{w(m)}}(u).
\]
We define formal series $A_i(u)$, $B_{i}(u)$ and $C_{i}(u)$ by
\begin{align*}
A_{i}(u) &= T_{(1, \ldots, i),(1, \ldots, i)}(u), \\
B_{i}(u) &= T_{(1, \ldots, i),(1, \ldots, i-1, i+1)}(u), \\
C_{i}(u) &= T_{(1, \ldots, i-1, i+1),(1, \ldots, i)}(u).
\end{align*}
The Gelfand-Zetlin subalgebra $A_{\hbar}(\mathfrak{gl}_N)$ of the Yangian $Y_{\hbar}(\mathfrak{gl}_N)$ is defined to be generated by all coefficients of $A_i(u)$ for all $i$.
It is known that $A_{\hbar}(\mathfrak{gl}_N)$ is commutative.

We introduce elements $X_{i,r}^{\pm}, H_{i,r}$ of $Y_{\hbar}(\mathfrak{gl}_N)$ by the following equations (see \cite[Remark~3.1.8]{MR2355506} ):
\begin{align*}
\hbar \sum_{r \geq 0} X_{i,r}^+ u^{-r-1} &= C_i(u + \dfrac{1}{2}\hbar(i-1)) A_i(u + \dfrac{1}{2}\hbar(i-1))^{-1}, \\
\hbar \sum_{r \geq 0} X_{i,r}^- u^{-r-1} &= A_i(u + \dfrac{1}{2}\hbar(i-1))^{-1} B_i(u + \dfrac{1}{2}\hbar(i-1)), \\
1 + \hbar \sum_{r \geq 0} H_{i,r} u^{-r-1} &= \dfrac{A_{i+1}(u + \dfrac{1}{2}\hbar(i+1)) A_{i-1}(u + \dfrac{1}{2}\hbar(i-1))}{A_i(u + \dfrac{1}{2}\hbar(i+1)) A_i(u + \dfrac{1}{2}\hbar(i-1))}.
\end{align*}

\begin{thm}[Drinfeld~\cite{MR914215}, see also \cite{MR2355506}]
The elements $X_{i,r}^{\pm}, H_{i,r}$ satisfy the defining relations of the Yangian $Y_{\hbar}(\mathfrak{sl}_N)$.
The subalgebra of $Y_{\hbar}(\mathfrak{gl}_N)$ generated by $X_{i,r}^{\pm}, H_{i,r}$ $(1 \leq i \leq N-1$, $r \in \mathbb{Z}_{\geq 0})$ is isomorphic to $Y_{\hbar}(\mathfrak{sl}_N)$.
\end{thm}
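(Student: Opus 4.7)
The plan is to prove the two claims in parallel through the RTT-formalism of $Y_\hbar(\mathfrak{gl}_N)$. First I would rewrite everything in terms of the generating series $T_{ij}(u)$ and exploit the defining quadratic relation
\[
(u-v)[T_{ij}(u),T_{kl}(v)] = \hbar\bigl(T_{kj}(u)T_{il}(v)-T_{kj}(v)T_{il}(u)\bigr).
\]
From this I would derive the standard commutation identities for quantum minors: for the leading minors $A_i(u)$ one gets $[A_i(u),A_j(v)]=0$ (commutativity of the Gelfand-Zetlin subalgebra), and for pairs $(A_i,B_j)$, $(A_i,C_j)$ one obtains closed-form relations with two regimes according to whether $j<i$ or $j\ge i$. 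The key cross relation is the one between $B_i(u)$ and $C_j(v)$, which after the standard computation produces a $\delta_{ij}$ times a combination of $A_{i\pm 1}$'s and $A_i^{-1}$'s on the right hand side.

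Once these minor identities are in place, I would substitute the definitions of $X_{i,r}^\pm$ and $H_{i,r}$ through their generating series
\[
\hbar X_i^+(u) = C_i(u+\tfrac{1}{2}\hbar(i-1))A_i(u+\tfrac{1}{2}\hbar(i-1))^{-1},
\]
etc., and read off each Drinfeld relation by matching coefficients of $u^{-r-1}v^{-s-1}$. The commutativity $[H_{i,r},H_{j,s}]=0$ and the Cartan-type relation $[H_{i,0},X_{j,s}^\pm]=\pm a_{ij}X_{j,s}^\pm$ come from the $A$-$A$ and $A$-$B/C$ commutation. The bracket $[X_{i,r}^+,X_{j,s}^-]=\delta_{ij}H_{i,r+s}$ follows directly from the $B$-$C$ relation together with the defining generating function of $H_{i,r}$; in particular the $A_{i+1}A_{i-1}/(A_iA_i)$-shape of $1+\hbar\sum H_{i,r}u^{-r-1}$ is exactly what emerges on the right. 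The "loop" relations for $[H_{i,r+1},X_{j,s}^\pm]$ and $[X_{i,r+1}^\pm,X_{j,s}^\pm]$ are obtained by multiplying the appropriate bilinear identity by $(u-v\mp\tfrac{\hbar}{2}a_{ij})$ and symmetrizing; the factor $\pm\tfrac{1}{2}\hbar a_{ij}$ on the right hand side appears naturally from the shift by $\tfrac{1}{2}\hbar(i-1)$ in the definitions of $X_i^\pm(u)$.

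The hardest step will be the Serre relations. I would reduce them, as in Molev, to a rank-$2$ computation: for $|i-j|=1$ the Serre identity is equivalent to a cubic relation among $B_i,B_j$ (respectively $C_i,C_j$), which in turn reduces to a finite combinatorial identity among products of $T_{ij}(u)$ at shifted spectral parameters. This identity can be checked in $Y_\hbar(\mathfrak{gl}_3)$ via the quantum minor manipulations, after which the general case follows because only three consecutive indices interact nontrivially.

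For the second assertion, that the subalgebra of $Y_\hbar(\mathfrak{gl}_N)$ generated by the $X_{i,r}^\pm,H_{i,r}$ is isomorphic to the abstract $Y_\hbar(\mathfrak{sl}_N)$, I would argue by constructing a two-sided inverse at the level of PBW data. The map $\varphi\colon Y_\hbar(\mathfrak{sl}_N)\to Y_\hbar(\mathfrak{gl}_N)$ induced by the formulas above is surjective onto the subalgebra in question by definition; injectivity follows by comparing the induced filtrations and noting that the associated graded of $Y_\hbar(\mathfrak{sl}_N)$ is $U(\mathfrak{sl}_N[t])$, which embeds into $U(\mathfrak{gl}_N[t])\subset \mathrm{gr}\,Y_\hbar(\mathfrak{gl}_N)$ compatibly with $\varphi$. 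The genuine obstacle throughout is purely computational, namely keeping track of the many half-integer $\hbar$-shifts in the arguments of $A_i,B_i,C_i$; a clean way to handle this is to make the change of variables $u\mapsto u+\tfrac{1}{2}\hbar(i-1)$ once and for all at the outset of each block of identities.
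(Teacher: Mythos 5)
The paper offers no proof of this theorem: it is quoted verbatim from Drinfeld, with Molev's book cited as the reference, so there is nothing in the text to compare against except that citation. Your sketch is essentially the standard argument from the cited source --- derive the quantum-minor commutation identities from the RTT relation, define the Drinfeld generators via $A_i,B_i,C_i$ with the half-integer shifts, match coefficients, and reduce the Serre relations to small rank --- so it follows the same route the paper is implicitly relying on. The one place to be careful is your injectivity argument: you cannot \emph{assume} that $\mathrm{gr}\,Y_{\hbar}(\mathfrak{sl}_N)\cong U(\mathfrak{sl}_N[t])$, since that Poincar\'e--Birkhoff--Witt statement for the Drinfeld presentation is exactly what is usually \emph{proved} by means of the embedding into $Y_{\hbar}(\mathfrak{gl}_N)$; the non-circular version only uses the surjection $U(\mathfrak{sl}_N[t])\twoheadrightarrow \mathrm{gr}\,Y_{\hbar}(\mathfrak{sl}_N)$ coming from the defining relations, together with the known PBW theorem for $Y_{\hbar}(\mathfrak{gl}_N)$ from the RTT side, to conclude that the composite is injective and hence so is $\varphi$.
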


In the sequel, the parameter is taken to be $\hbar = \varepsilon_1 + \varepsilon_2$ or $\hbar' = - \hbar = - (\varepsilon_1 + \varepsilon_2)$.
We easily see that $Y_{\hbar}(\mathfrak{sl}_N)$ and $Y_{\hbar'}(\mathfrak{sl}_N)$ are isomorphic via
\begin{align*}
X_{i,r}^{\pm} &\mapsto X_{i,r}^{\mp}, \\
H_{i,r} &\mapsto -H_{i,r}.
\end{align*}
Hereafter we always use the symbols $X_{i,r}^{\pm}, H_{i,r}$ for the elements of $Y_{\hbar}(\mathfrak{sl}_N)$, but $T_{ij}^{(r)}$ and all related formal series for $Y_{\hbar'}(\mathfrak{gl}_N)$.
Then the composite of
\[
Y_{\hbar}(\mathfrak{sl}_N) \overset{\cong}{\to} Y_{\hbar'}(\mathfrak{sl}_N) \hookrightarrow Y_{\hbar'}(\mathfrak{gl}_N)
\]
is given by
\begin{align}
-\hbar \sum_{r \geq 0} X_{i,r}^+ u^{-r-1} &= A_i(u - \dfrac{1}{2}\hbar(i-1))^{-1} B_i(u - \dfrac{1}{2}\hbar(i-1)), \label{eq:X_{i,r}^+} \\
-\hbar \sum_{r \geq 0} X_{i,r}^- u^{-r-1} &= C_i(u - \dfrac{1}{2}\hbar(i-1)) A_i(u - \dfrac{1}{2}\hbar(i-1))^{-1}, \label{eq:X_{i,r}^-} \\
1 + \hbar \sum_{r \geq 0} H_{i,r} u^{-r-1} &= \dfrac{A_{i+1}(u - \dfrac{1}{2}\hbar(i+1)) A_{i-1}(u - \dfrac{1}{2}\hbar(i-1))}{A_i(u - \dfrac{1}{2}\hbar(i+1)) A_i(u - \dfrac{1}{2}\hbar(i-1))}. \label{eq:H_{i,r}}
\end{align}
We set
\[
H_i(u) = 1 + \hbar \sum_{r \geq 0} H_{i,r} u^{-r-1}.
\]
The inclusion $Y_{\hbar}(\mathfrak{sl}_N) \hookrightarrow Y_{\hbar'}(\mathfrak{gl}_N)$ is compatible with the standard inclusion $\mathfrak{sl}_N \hookrightarrow \mathfrak{gl}_N$, that is, the diagram
\[
\xymatrix{
Y_{\hbar}(\mathfrak{sl}_N) \ \ar@{^{(}->}[r] & \ Y_{\hbar'}(\mathfrak{gl}_N) \\
\mathfrak{sl}_N \ \ar@{^{(}->}[u] \ar@{^{(}->}[r] & \ \mathfrak{gl}_N \ar@{^{(}->}[u]
}
\]
commutes.
By the construction of the generators, the automorphism $\omega_{f}$ acts identically on $Y_{\hbar}(\mathfrak{sl}_N)$.

\subsection{Preliminaries on partitions}\label{subsection:preliminaries}

Let $\mathcal{P}$ be the set of partitions.
Each element of $\mathcal{P}$ is a nonincreasing sequence $\lambda = (\lambda_1 \geq \lambda_2 \geq \cdots)$ where $\lambda_a \in \mathbb{Z}_{\geq 0}$ and $\lambda_a = 0$ for all but finitely many $a$.
The maximum $a$ such that $\lambda_a \neq 0$ is called the length of $\lambda$ and denoted by $l(\lambda)$.
We denote by ${}^t \lambda$ the transpose of $\lambda$.
The dominance order on $\mathcal{P}$ is denoted by $>$.

We identify a partition $\lambda = (\lambda_a) \in \mathcal{P}$ with the subset
\[
\{ (x,y) \mid x = 0, \ldots, l(\lambda) - 1, y = 0, \ldots, \lambda_{x+1} - 1 \}
\]
of $(\mathbb{Z}_{\geq 0})^2$.
Although a partition $\lambda$ is usually identified with the Young diagram $\{ (x,y) \mid x = 1, \ldots, l(\lambda), y = 1, \ldots, \lambda_{x} \}$, we use the above identification in this paper.
This is suitable to write down various formulas arising from quiver variety.
Each $(x,y) \in (\mathbb{Z}_{\geq 0})^2$ is called a cell.
For a cell $s = (x,y) \in \lambda$, we define the arm length $a_{\lambda}$, the leg length $l_{\lambda}$, and the hook length $h_{\lambda}$ by
\begin{align*}
a_{\lambda}(s) &= \lambda_{x+1} - (y + 1), \\
l_{\lambda}(s) &= {}^t \lambda_{y+1} - (x + 1), \\
h_{\lambda}(s) &= a_{\lambda}(s) + l_{\lambda}(s) + 1.
\end{align*}
For a fixed $N$, the residue of a cell $(x,y) \in (\mathbb{Z}_{\geq 0})^2$ is
\[
y - x \mod N.
\]
An $i$-cell is a cell whose residue is $i$.

For a partition $\lambda \in \mathcal{P}$, we say that a cell $(x,y) \in(\mathbb{Z}_{\geq 0})^2$ is removable if $(x,y) \in \lambda$ and $(x+1, y), (x, y+1) \notin \lambda$.
We say that a cell $(x,y) \in (\mathbb{Z}_{\geq 0})^2$ is addable if either $(x,y) \notin \lambda$ and $(x-1, y), (x, y-1) \in \lambda$, or $(x,y) = (0,\lambda_1), (l(\lambda),0)$.
The sets of removable $i$-cells and addable $i$-cells for $\lambda$ are denoted by $R_{\lambda,i}$ and $A_{\lambda,i}$.
For $(x,y) \in R_{\lambda,i}$ define
\begin{align*}
R_{\lambda,i,(x,y)}^{l} &= \{ (x',y') \in R_{\lambda,i} \mid x' < x \}, \\
R_{\lambda,i,(x,y)}^{r} &= \{ (x',y') \in R_{\lambda,i} \mid x < x' \},
\end{align*}
and similarly for $(x,y) \in A_{\lambda,i}$ define
\begin{align*}
A_{\lambda,i,(x,y)}^{l} &= \{ (x',y') \in A_{\lambda,i} \mid x' < x \}, \\
A_{\lambda,i,(x,y)}^{r} &= \{ (x',y') \in A_{\lambda,i} \mid x < x' \}.
\end{align*}

Given a nondecreasing sequence of integers ${\bf m} = (m_1 \leq m_2 \leq \cdots)$, we can write it as ${\bf m} = ((r_1)^{p_1}, (r_2)^{p_2}, \ldots)$ with $r_a < r_{a+1}$.
Define the sequence ${\bf m}^0$ as ${\bf m}^0 = (1^N 2^N \dots)$.
We denote by $M$ the set of nondecreasing sequence of integers satisfying
\begin{itemize}
\item $p_a \leq N$ for all $a$,
\item $m_a = m_a^0$ for all but finitely many $a$.
\end{itemize}
This set is introduced in \cite[Section~10, Definition~5]{MR1724950} and is denoted by $W$ there.
For each element ${\bf m} \in M$, we take the smallest $l$ such that
\[
{\bf m} = ((r_1)^{p_1},(r_2)^{p_2}, \ldots, (r_l)^{p_l}, (r_{l+1})^N, \ldots)
\]
and $m_a = m_a^0$ for $a > p_1 + \cdots + p_l$.
It is clear that $p_1 + \cdots + p_l$ is a multiple of $N$.
For $\lambda \in \mathcal{P}$, define $j(\lambda)_a \in \{1, \ldots, N\}$ and $m(\lambda)_a \in \mathbb{Z}$ so that
\[
\lambda_{a} - a + 1 = j(\lambda)_a - N m(\lambda)_a.
\]
Then the sequence $(m(\lambda)_a)$ is an element of $M$.
Note that $j(\lambda)_a > j(\lambda)_{a+1}$ if $m(\lambda)_a = m(\lambda)_{a+1}$.

We have a bijection between the sets $R_{\lambda,i}$ and
\begin{equation}
\left\{a \, \middle|\, 1 \leq a \leq l(\lambda),\ j(\lambda)_a = i+1, \text{ and either } 
\begin{array}{l}
\bullet\, m(\lambda)_a < m(\lambda)_{a+1}; \text{ or }\\
\bullet\, m(\lambda)_a = m(\lambda)_{a+1} \text{ and } j(\lambda)_{a+1} < i
\end{array}
\right\}\label{eq:SetR}
\end{equation}
via $(x,y) = (a-1, \lambda_a - 1)$.
Indeed any removable cell is of the form $(a-1, \lambda_a - 1)$ for some $a$ such that $\lambda_a > \lambda_{a+1}$.
The condition $\lambda_a > \lambda_{a+1}$ is equivalent to that
\[
(j(\lambda)_a - 1) - j(\lambda)_{a+1} > N(m(\lambda)_a - m(\lambda)_{a+1}).
\]
Since the residue of $(a-1, \lambda_a - 1)$ is $i$ if and only if $j(\lambda)_{a} = i+1$, we see the above correspondence is one-to-one.
Similarly we have a bijection between the sets $A_{\lambda,i}$ and
\begin{equation}
\left\{a \, \middle|\, 1 \leq a \leq l(\lambda) + 1,\ j(\lambda)_a = i, \text{ and either } 
\begin{array}{l}
\bullet\, m(\lambda)_{a-1} < m(\lambda)_{a}; \text{ or }\\
\bullet\, m(\lambda)_{a-1} = m(\lambda)_{a} \text{ and } j(\lambda)_{a-1} > i+1
\end{array}
\right\}\label{eq:SetA}
\end{equation}
via $(x,y) = (a-1, \lambda_a)$.
Indeed any addable cell is of the form $(a-1, \lambda_a)$ for some $a$ such that $\lambda_{a-1} > \lambda_{a}$.
The condition $\lambda_{a-1} > \lambda_{a}$ is equivalent to that
\[
j(\lambda)_{a-1} - (j(\lambda)_{a} + 1) > N(m(\lambda)_{a-1} - m(\lambda)_{a}).
\]
Since the residue of $(a-1, \lambda_a)$ is $i$ if and only if $j(\lambda)_{a} = i$, we see the above correspondence is one-to-one.
We note that
\begin{equation}
y - x = i - N m(\lambda)_a \label{eq:(x,y)}
\end{equation}
holds in both cases.

We have a decomposition $\mathcal{P} = \bigsqcup_{{\bf m} \in M} \mathcal{P}_{\bf m}$, where $\mathcal{P}_{\bf m}$ is the subset of $\mathcal{P}$ which consists of partitions satisfying $(m(\lambda)_a) = {\bf m}$.

\subsection{Yangian action on the Fock space}

Let $F$ be the space of symmetric functions in variables $x_1, x_2, \ldots$ over the field $\mathbb{C}(\varepsilon_1, \varepsilon_2)$.
It has a basis $\{s_{\lambda}\}_{\lambda \in \mathcal{P}}$ consisting of the Schur symmetric functions and admits the action of the affine Lie algebra $\hat{\mathfrak{sl}}_N$ defined by
\begin{align}
X_i^{+} s_{\lambda} &= \sum_{s \in R_{\lambda,i}} s_{\lambda \setminus s}, \label{eq:affine1} \\
X_i^{-} s_{\lambda} &= \sum_{s \in A_{\lambda,i}} s_{\lambda \cup s}, \label{eq:affine2} \\
H_i s_{\lambda} &= (\# A_{\lambda,i} - \# R_{\lambda,i}) s_{\lambda}, \label{eq:affine3}
\end{align}
where $X_i^{\pm}, H_i$ $(i \in \mathbb{Z} / N \mathbb{Z})$ are the standard Chevalley generators of $\hat{\mathfrak{sl}}_N$.
We call $F = \bigoplus_{\lambda \in \mathcal{P}} \mathbb{C}(\varepsilon_1, \varepsilon_2) s_{\lambda}$ the level one Fock space.

We recall a construction of an action of the Yangian $Y_{\hbar'}(\mathfrak{gl}_N)$ on the Fock space $F$ due to Uglov \cite{MR1724950}.
Fix $n \in \mathbb{Z}_{\geq 1}$.
We define the Dunkl-Cherednik operators $D_i$ $(1 \leq i \leq n)$ acting on $\mathbb{C}(\varepsilon_1, \varepsilon_2) [z_1^{\pm 1}, \ldots, z_n^{\pm 1}]$ by
\[
D_i = t z_i\dfrac{\partial}{\partial z_i} + c\left( \sum_{j < i} \dfrac{z_i}{z_i - z_j} (1 - K_{ji}) + \sum_{i<j} \dfrac{z_j}{z_i - z_j}(1 - K_{ij}) + n - i - \dfrac{1}{2} \right)
\]
with parameters $t,c \in \mathbb{C}(\varepsilon_1, \varepsilon_2)$.
Here $K_{ij}$ denotes the permutation of $z_i$ and $z_j$.
This is shifted by the constant $c((1/2)n - 1)$ from the usual Dunkl-Cherednik operator (see \cite[Part I, Definition~2.4]{MR1805058})
\[
t z_i\dfrac{\partial}{\partial z_i} + c\left( \sum_{j < i} \dfrac{z_i}{z_i - z_j} (1 - K_{ji}) + \sum_{i<j} \dfrac{z_j}{z_i - z_j}(1 - K_{ij}) + \dfrac{1}{2}(n - 2i + 1) \right),
\]
where $R^+$ in \cite{MR1805058} is taken to be the set of standard negative roots $\{ e_i - e_j \mid i > j \}$ of $\mathfrak{gl}_N$.
Uglov~\cite[(4.6)]{MR1724950} used
\[
\beta^{-1} z_i\dfrac{\partial}{\partial z_i} + \left( \sum_{j < i} \dfrac{z_i}{z_i - z_j} (1 - K_{ji}) + \sum_{i<j} \dfrac{z_j}{z_i - z_j}(1 - K_{ij}) + n - i \right).
\]
Hence we identify the parameters by $\beta = c/t$.

We have the relations
\[
K_{i,i+1} D_i - D_{i+1} K_{i,i+1} = c
\]
for $i = 1, \ldots, n-1$.
Thus we can define a right action of the degenerate affine Hecke algebra, generated by $s_i$ $(1 \leq i \leq n-1),$ $u_j$ $(1 \leq j \leq n)$ subject to the relations
\[
s_i^2 = 1,\ s_i s_j = s_j s_i \ (i \neq j, j \pm 1),\ s_i s_{i+1} s_i = s_{i+1} s_i s_{i+1},
\]
\[
u_i u_j = u_j u_i,\ s_i u_j = u_j s_i \ (i \neq j, j \pm 1),\ s_i u_i - u_{i+1} s_i = c,
\]
on $\mathbb{C}(\varepsilon_1, \varepsilon_2) [z_1^{\pm 1}, \ldots, z_n^{\pm 1}]$ by
\begin{align*}
s_i &\mapsto - K_{i,i+1}, \\
u_i &\mapsto - D_i.
\end{align*}
Set $c = \hbar' = -\hbar$ to apply Drinfeld's result.
Let $V$ be the $N$-dimensional vector representation of $\mathfrak{gl}_N$ with a standard basis $v_1, \ldots, v_N$.
By the Drinfeld correspondence~\cite{MR831053} (see also \cite{MR1706920}), we obtain a one-parameter family of actions of $Y_{\hbar'}(\mathfrak{gl}_N)$ on
\[
F_n = \mathbb{C}(\varepsilon_1, \varepsilon_2) [z_1^{\pm 1}, \ldots, z_n^{\pm 1}] \otimes _{\mathbb{C}\mathfrak{S}_n} (V^{\otimes n}),
\]
which depend on $t$.

We denote the tensor product $V \otimes \mathbb{C}(\varepsilon_1, \varepsilon_2) [z^{\pm 1}]$ by $V[z^{\pm 1}]$ and its element $v_j \otimes z^m$ by $v_j z^m$.
We have an obvious identification
\[
F_n \overset{\cong}{\to} \bigwedge^n V[z^{\pm 1}],
\]
\[ 
z_1^{m_1} \cdots z_n^{m_n} \otimes ( v_{j_1} \otimes \cdots \otimes v_{j_n} )  \mapsto v_{j_1} z^{m_1} \wedge \cdots \wedge v_{j_n} z^{m_n}.
\]
The latter wedge space has a basis $\{ v_{j_1} z^{m_1} \wedge \cdots \wedge v_{j_n} z^{m_n} \mid j_1 - N m_1 > j_2 - N m_2 > \cdots > j_n - N m_n \}$.
It is isomorphic to the space of symmetric Laurent polynomials:
\[
\bigwedge^n V[z^{\pm 1}] \overset{\cong}{\to} \mathbb{C}(\varepsilon_1, \varepsilon_2)[x_1^{\pm 1}, \ldots, x_n^{\pm 1} ]^{\mathfrak{S}_n},
\]
\begin{equation}
v_{j_1} z^{m_1} \wedge \cdots \wedge v_{j_n} z^{m_n} \mapsto s_{\lambda} \label{eq:wedge-Schur}
\end{equation}
by assigning the basis element $v_{j_1} z^{m_1} \wedge \cdots \wedge v_{j_n} z^{m_n}$ with $j_1 - N m_1 > j_2 - N m_2 > \cdots > j_n - N m_n$ to the Schur symmetric Laurent polynomial $s_{\lambda}$ associated with $\lambda = (\lambda_1 \geq \cdots \geq \lambda_n) \in \mathbb{Z}^n$ which is given by
\[
\lambda_a - a +1 = j_a - N m_a.
\]
See \cite[(8.7) and (8.9)]{MR1724950}.
Then we can take the subspace $F_n^{0}$ of $F_n$ which is isomorphic to the space of symmetric polynomials~\cite[(8.41), (8.42)]{MR1724950}.
This $F_n^{0}$ has two gradings, which are called principal and homogeneous.
We denote by $F_n^{0, \langle d \rangle}$ the principal grading and by $F_n^{0, (d)}$ the homogeneous grading.
The principal grading corresponds to the obvious one on the symmetric polynomials.
Hence the Fock space $F$ is realized by taking the inverse limit of $F_n^0$ degreewise, that is,
\[
F = \bigoplus_{d \geq 0} \varprojlim_{n} F_n^{0, \langle d \rangle}.
\]
To define a Yangian action, we consider the homogeneous grading.
For each $d \geq 0$ and $n=rN$, the Yangian action on $F_{rN}$ preserves the subspace $F_{rN}^{0, (d)}$.
Moreover $(F_{rN}^{0, (d)})_{r}$ forms a inverse system.
Take an element $v = (v_r)_r$ of the inverse limit
\[
\displaystyle\varprojlim_{r} F_{rN}^{0, (d)}
\]
and define an action of $T_{ij}(u)$ by
\begin{equation}
T_{ij}(u) v = ( f(u; r) T_{ij}(u) v_r )_r, \label{eq:intertwining}
\end{equation}
where
\[
f(u; r) = \prod_{s=1}^r \dfrac{u + (t + \hbar' N)s - \hbar'}{u + (t + \hbar' N)s}
\]
as in \cite[(10.7)]{MR1724950}.
It is nontrivial and is proved by Uglov that this is compatible with the inverse system and extends to a well-defined action of $Y_{\hbar'}(\mathfrak{gl}_N)$ on the inverse limit.
See \cite[Section~10]{MR1724950} for details.
Finally the two gradings are compatible and the Fock space is also realized as
\[
F = \bigoplus_{d \geq 0} \varprojlim_{r} F_{rN}^{0, (d)}.
\]
Thus we have a one-parameter family of Yangian actions on $F$ which depend on $t$.
We set $t = N \varepsilon_2$ from now on.

Let us describe the action of $\mathfrak{gl}_N$ on $F_n$ obtained by restricting the Yangian action.
By \cite[(10.5)]{MR1724950}, the element $E_{ij}$ of $\mathfrak{gl}_N$ acts on $F_n \cong \bigwedge^n V[z^{\pm 1}]$ by
\[
E_{ij} ( v_{j_1} z^{m_1} \wedge \cdots \wedge v_{j_n} z^{m_n} ) = \sum_{a=1}^n v_{j_1} z^{m_1} \wedge \cdots \wedge E_{ij} ( v_{j_a} z^{m_a} ) \wedge \cdots \wedge v_{j_n} z^{m_n}.
\]
Note that Uglov's $T_{ij}^{(r)}$ in \cite[(10.5)]{MR1724950} corresponds to our $T_{ji}^{(r)}$.
Considering the identification given by (\ref{eq:wedge-Schur}), the above formula implies that the action of $\mathfrak{sl}_N$ on $F$ obtained by restricting the Yangian action coincides with the standard action given by (\ref{eq:affine1})--(\ref{eq:affine3}) for $i=1, \ldots, N-1$.

\begin{rem}
The Yangian module $F_n$ can be regarded as the space of states of the spin Calogero-Sutherland model with $n$ particles.
See \cite{MR1457972}, \cite{MR1608527}, \cite{MR1724950}.
\end{rem}

\subsection{Jack($\mathfrak{gl}_N$) symmetric functions}

For a given ${\bf m} \in M$, we take the smallest $l$ such that
\[
{\bf m} = ((r_1)^{p_1},(r_2)^{p_2}, \ldots, (r_l)^{p_l}, (r_{l+1})^N, \ldots)
\]
and $m_a = m_a^0$ for $a > p_1 + \cdots + p_l$ as in Subsection~\ref{subsection:preliminaries}.
Let $r$ be the integer such that $p_1 + \cdots + p_l = rN$.
For $s = 1, \ldots, l$, put
\[
a_s = t r_s + \hbar'(p_1 + \cdots + p_s - \dfrac{3}{2})
\]
as in \cite[Proposition~10.5]{MR1724950} with shift $-(1/2)\hbar'$ because of our choice of the constant part of the Dunkl-Cherednik operator.
By \cite[Proposition 10.5]{MR1724950}, the Fock space $F$ decomposes as
\[
F = \bigoplus_{{\bf m} \in M} F_{\bf m}
\]
and each $F_{\bf m}$ is isomorphic to
\[
\omega_{f(u; r)}^* ( V(\varpi_{p_1})_{a_1} \otimes \cdots \otimes V(\varpi_{p_l})_{a_l} )
\]
as a $Y_{\hbar'}(\mathfrak{gl}_N)$-module.
The twist by the automorphism $\omega_{f(u; r)}$ is caused by the definition of the action of $T_{ij}(u)$ in (\ref{eq:intertwining}).
See \cite[Proposition 7.7]{MR1724950} for a decomposition of the Yangian module $F_n$.

These modules appearing in the decomposition of $F$ belong to a class of simple $Y_{\hbar'}(\mathfrak{gl}_N)$-modules studied by Nazarov-Tarasov~\cite{MR1605817}.
In particular, all simultaneous eigenspaces of the Gelfand-Zetlin subalgebra $A_{\hbar'}(\mathfrak{gl}_N)$ are one-dimensional. 

Uglov introduced a unique basis $\{P_{\lambda}\}_{\lambda \in \mathcal{P}}$ of $F$ satisfying the following conditions:
\begin{enumerate}
\item[(U1)] $\{P_{\lambda}\}_{\lambda \in \mathcal{P}_{\bf m}}$ forms a basis of each $F_{\bf m}$,
\item[(U2)] $P_{\lambda} \in s_{\lambda} + \sum_{\mu < \lambda} \mathbb{C}(\varepsilon_1, \varepsilon_2) s_{\mu},$
\item[(U3)] each $P_{\lambda}$ is a simultaneous eigenvector of $A_{\hbar'}(\mathfrak{gl}_N)$.
\end{enumerate}
Let $p_{r}$ be the $r$-th power sum symmetric function and define $p_{\lambda}$ by
\[
p_{\lambda} = p_{\lambda_1} p_{\lambda_2} \cdots
\]
for each $\lambda \in \mathcal{P}$.
Then $\{ p_{\lambda} \}_{\lambda \in \mathcal{P}}$ forms a basis of $F$.
Uglov defined a symmetric bilinear form $\langle\ ,\  \rangle_{F}$ on $F$ by
\[
\langle p_{\lambda}, p_{\mu} \rangle_{F} = \delta_{\lambda \mu} z_{\lambda} (- \varepsilon_2 / \varepsilon_1)^{l_N(\lambda)},
\]
where $z_{\lambda} = \displaystyle\prod_i i^{m_i} m_i !$ for $\lambda = (1^{m_1} 2^{m_2} \ldots)$ and $l_N(\lambda) = \# \{a \mid \lambda_a \neq 0,\ \lambda_a \equiv 0 \bmod N\}$.

\begin{rem}
Let us explain how to identify the parameters in \cite{MR1724950} with ours.
Uglov's bilinear form~\cite[9.4]{MR1724950} is defined as
\[
\langle p_{\lambda}, p_{\mu} \rangle_{F} = \delta_{\lambda \mu} z_{\lambda} \gamma^{-l_N(\lambda)}
\]
where $\gamma = N \beta + 1$ is a parameter.
The parameter $\beta$ can be identified with our $c/t$.
Therefore we set $t = N \varepsilon_2$ and obtain
\begin{align*}
\gamma &= N \beta + 1 \\
&= - \hbar / \varepsilon_2 + 1 \\
&= - \varepsilon_1 / \varepsilon_2. 
\end{align*}
\end{rem}

Let us explain the relation between the symmetric function $P_{\lambda}$ and the Macdonald symmetric function following \cite[9.4]{MR1724950}.
The bilinear form $\langle \ ,\ \rangle_F$ is a certain limit of the Macdonald scalar product $\langle \ ,\ \rangle_{q,t}$ defined by
\[
\langle p_{\lambda}, p_{\mu} \rangle_{q,t} = \delta_{\lambda \mu} z_{\lambda} \prod_{a=1}^{l(\lambda)} \dfrac{1 - q^{\lambda_a}}{1 - t^{\lambda_a}}.
\]
To obtain $\langle \ ,\ \rangle_F$, we formally take the limit $p \to 1$ after putting $q = p\, \omega_N$, $t = p^{\gamma} \omega_N$ where $\omega_N$ denotes a primitive $N$-th root of unity and $\gamma = - \varepsilon_1 / \varepsilon_2$ as above.

\begin{prop}\label{prop:adjoint}
We have
\[
\langle T_{ij}^{(r)} v, w \rangle_F = \langle v, T_{ji}^{(r)} w \rangle_F.
\]
In particular, elements of the Gelfand-Zetlin subalgebra $A_{\hbar'}(\mathfrak{gl}_N)$ are self-adjoint with respect to the bilinear form $\langle\ ,\  \rangle_{F}$ and we have
\[
\langle X_{i,r}^+ v, w \rangle_F = \langle v, X_{i,r}^- w \rangle_F.
\]
\end{prop}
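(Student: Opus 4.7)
The three assertions will be deduced from the first by computing how the anti-automorphism $\tau \colon T_{ij}(u) \mapsto T_{ji}(u)$ of $Y_{\hbar'}(\mathfrak{gl}_N)$ acts on the generators of $A_{\hbar'}(\mathfrak{gl}_N)$ and on the Drinfeld generators. Using the two equivalent expansions of the quantum minors $T_{(i_1,\ldots,i_m),(j_1,\ldots,j_m)}(u)$ and the fact that $\tau$ reverses the order of multiplication, a direct calculation (easily verified by hand in the case $i=2$ using the RTT relation) gives $\tau(A_i(u)) = A_i(u)$, $\tau(B_i(u)) = C_i(u)$, and $\tau(C_i(u)) = B_i(u)$. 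The first identity says $A_{\hbar'}(\mathfrak{gl}_N)$ is self-adjoint under $\langle\ ,\ \rangle_F$; the remaining two, combined with the defining formulas (\ref{eq:X_{i,r}^+})--(\ref{eq:X_{i,r}^-}) for the $X_{i,r}^{\pm}$, yield the adjoint relation $\langle X_{i,r}^{+} v, w\rangle_F = \langle v, X_{i,r}^{-} w\rangle_F$ once the first assertion is established.

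To prove the main assertion, the plan is to verify it at finite level $F_n$ and then pass to the inverse limit. At level $n = rN$, the Yangian action on $F_n$ is produced via Drinfeld's functor from the polynomial representation of the degenerate affine Hecke algebra with $c = -\hbar$, whose generators are the Dunkl-Cherednik operators $D_j$ and the coordinate permutations $K_{jk}$. Under the identification (\ref{eq:wedge-Schur}), the pairing $\langle\ ,\ \rangle_F$ is recovered (following the remark preceding this proposition) as the $p \to 1$ limit of the Macdonald scalar product at $q = p\omega_N$, $t = p^{\gamma}\omega_N$. It is standard in trigonometric Cherednik theory that the $D_j$ are self-adjoint and the $K_{jk}$ symmetric under the Macdonald pairing; combined with the explicit symmetrization formulas for the $T_{ij}^{(r)}$ furnished by Drinfeld's functor, this yields $\tau$-adjointness on each $F_n$. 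The prefactor $f(u;r)$ in the inverse limit definition (\ref{eq:intertwining}) is a scalar series in $u^{-1}$, so it commutes with the pairing, and the adjoint relation descends to $F$.

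The main technical obstacle is the precise matching of normalizations across these identifications, in particular showing that the Macdonald-limit form really is $\tau$-invariant (not merely invariant up to an unwanted scalar) and that passing to the inverse limit with the twist $\omega_{f(u;r)}$ preserves this. An alternative strategy that circumvents the finite-level computation uses the decomposition $F = \bigoplus_{{\bf m}} F_{\bf m}$ into irreducible Nazarov-Tarasov modules with one-dimensional joint $A_{\hbar'}$-eigenspaces: any $\tau$-invariant bilinear form on such a module is unique up to scalar, and existence on $V(\varpi_{p_1})_{a_1} \otimes \cdots \otimes V(\varpi_{p_l})_{a_l}$ can be constructed from the standard $\mathfrak{gl}_N$-invariant pairings on the fundamental modules via the coproduct. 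The problem then reduces to identifying, up to a nonzero scalar on each $F_{\bf m}$, the restriction of $\langle\ ,\ \rangle_F$ with this canonical form, the bookkeeping for which should be extractable from \cite{MR1724950}.
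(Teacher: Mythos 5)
Your main line of argument is correct and is essentially the paper's own proof: the paper likewise reduces to the finite level $F_n$, where it simply cites \cite[Proposition~9]{MR1457972} for the adjointness $\langle T_{ij}^{(r)}v,w\rangle_{F_n}=\langle v,T_{ji}^{(r)}w\rangle_{F_n}$ (the statement you re-derive from self-adjointness of the Dunkl--Cherednik operators under the Drinfeld functor), then identifies $\langle\ ,\ \rangle_{F_n}$ with the limit of Macdonald's $\langle\ ,\ \rangle_n'$ and realizes $\langle\ ,\ \rangle_F$ as the $n\to\infty$ limit of $\langle\ ,\ \rangle_{F_n}/\langle 1,1\rangle_{F_n}$, the scalar factors $f(u;r)$ and $\langle 1,1\rangle_{F_n}$ being harmless exactly as you observe. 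Your opening computation $\tau(A_i(u))=A_i(u)$, $\tau(B_i(u))=C_i(u)$, $\tau(C_i(u))=B_i(u)$ is the standard deduction of the two ``in particular'' claims, which the paper leaves implicit.
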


\begin{proof}
It is shown in \cite[Proposition~9]{MR1457972} that a certain scalar product $\langle \ ,\ \rangle_{F_n}$ (see \cite[2.1]{MR1457972} or \cite[2.1]{MR1724950} for the definition) defined on $F_n$ has a property
\[
\langle T_{ij}^{(r)} v, w \rangle_{F_n} = \langle v, T_{ji}^{(r)} w \rangle_{F_n}.
\]
By \cite[Proposition~8.1 and Subsection~8.4]{MR1724950}, this $\langle \ ,\ \rangle_{F_n}$ is identified with the limit, same as mentioned before this proposition, of another scalar product which is denoted by $\langle \ ,\ \rangle_n'$ in Macdonald's book \cite[Section~9]{MR1354144}.
Since the Macdonald scalar product on $F$ is the limit $n \to \infty$ of $\langle\ ,\ \rangle_n' / \langle 1, 1 \rangle_n'$ \cite[(9.9)]{MR1354144}, our bilinear form $\langle \ ,\ \rangle_{F}$ is the limit $n \to \infty$ of $\langle \ ,\ \rangle_{F_n} / \langle 1,1 \rangle_{F_n}$.
This implies the desired property of $\langle \ ,\ \rangle_{F}$.
\end{proof}

Since $\{P_{\lambda}\}$ are simultaneous eigenvectors of $A_{\hbar'}(\mathfrak{gl}_N)$ with one-dimensional eigenspaces and $A_{\hbar'}(\mathfrak{gl}_N)$ are self-adjoint with respect to $\langle\ ,\ \rangle_F$, we have the orthogonality of $\{P_{\lambda}\}$:
\begin{enumerate}
\item[(U4)] $\langle P_{\lambda},  P_{\mu} \rangle_F = 0$ if $\lambda \neq \mu$.\end{enumerate}
The conditions (U2) and (U4) uniquely characterize the basis $\{P_{\lambda}\}_{\lambda \in \mathcal{P}}$.
Note that this characterization coincides with the one for Jack symmetric functions when $N=1$.
By this reason, Uglov calls $P_{\lambda}$ the Jack($\mathfrak{gl}_N$) symmetric function.
A polynomial version of $P_{\lambda}$ was first defined in \cite{MR1608527}.
Specializing the parameter $\hbar = \varepsilon_1 + \varepsilon_2$ to $0$, the bilinear form $\langle \ ,\  \rangle_{F}$ becomes the usual bilinear form on $F$ which the Schur symmetric functions are orthonormal with respect to.
Hence the Jack($\mathfrak{gl}_N$) symmetric function $P_{\lambda}$ specializes to the Schur symmetric function $s_{\lambda}$ at $\hbar = 0$.

The orthogonality condition (U4) can be deduced also from the fact that $P_{\lambda}$ is obtained from the Macdonald symmetric function $P_{\lambda}(q,t)$ by taking the limit
\[
P_{\lambda} = \lim_{p \to 1} P_{\lambda}(p\, \omega_N, p^{\gamma} \omega_N).
\]
In particular, $P_{\lambda}$ can be defined without Yangian action.
We obtain a norm formula for Jack($\mathfrak{gl}_N$) symmetric functions
\begin{equation}
\langle P_{\lambda}, P_{\lambda} \rangle_F = \displaystyle \prod_{\substack{s \in \lambda\\ h_{\lambda}(s) \equiv 0 \bmod N}} \dfrac{\varepsilon_1 l_{\lambda}(s) - \varepsilon_2 (a_{\lambda}(s) + 1)}{\varepsilon_1 (l_{\lambda}(s) + 1) - \varepsilon_2 a_{\lambda}(s)} \label{eq:norm}
\end{equation}
by this description \cite[(9.46)]{MR1724950}.

The eigenvalue of the action of $A_i(u)$ on $P_{\lambda}$ has been calculated in \cite{MR1724950}.
Recall that for each $\lambda \in \mathcal{P}$, we associate $j(\lambda)_a \in \{1, \ldots, N\}$ and $m(\lambda)_a \in \mathbb{Z}$ so that
\[
\lambda_{a} - a + 1 = j(\lambda)_a - N m(\lambda)_a.
\]
The sequence $(m(\lambda)_a) \in M$ gives rise to the numbers $p_1, \ldots, p_l$ such that $p_1 + \cdots + p_l = rN$ for some $r$.
Then we have
\begin{equation}
A_i(u) P_{\lambda} = g_i(u) \displaystyle \prod_{a=1}^{rN} \dfrac{u + t m(\lambda)_a + \hbar'(a - \dfrac{3}{2} + \delta(j(\lambda)_a \leq i))}{u + t m(\lambda)_a+ \hbar'(a - \dfrac{3}{2})} P_{\lambda} \label{eq:eigenvalue}
\end{equation}
by \cite[Proposition 10.3]{MR1724950}.
Here
\[
g_i(u) = f(u;r)f(u-\hbar';r) \cdots f(u-\hbar'(i-1);r).
\]
The above formula is shifted by $-(1/2)\hbar'$ from \cite{MR1724950} because of the choice of the Dunkl-Cherednik operator.
We will derive an explicit formula for the actions of the generators $X_{i,r}^{\pm}, H_{i,r}$ on the basis $P_{\lambda}$ in the next section.

We end this subsection with some formulas.

\begin{lem}\label{lem:ratio}
\begin{enumerate}
Let $(x,y) \in R_{\lambda,i}$ and $\mu = \lambda \setminus (x,y)$.
Then we have
\item
\begin{align*}
& \dfrac{\displaystyle\prod_{\substack{s \in \lambda\\ h_{\lambda}(s) \equiv 0 \bmod N}} \varepsilon_1 l_{\lambda}(s) - \varepsilon_2 (a_{\lambda}(s) + 1)}{\displaystyle\prod_{\substack{s \in \mu\\ h_{\mu}(s) \equiv 0 \bmod N}} \varepsilon_1 l_{\mu}(s) - \varepsilon_2 (a_{\mu}(s) + 1)} \\
& \\
&= (-1)^{\# A_{\lambda, i, (x,y)}^{r} - \# R_{\mu, i, (x,y)}^{r}} \\
& \dfrac{\displaystyle\prod_{(x', y') \in A_{\lambda, i, (x,y)}^{l}} \varepsilon_1 (x - x') + \varepsilon_2 (y - y')}{\displaystyle\prod_{(x', y') \in R_{\mu, i, (x,y)}^{l}} \varepsilon_1 (x - x' - 1) + \varepsilon_2 (y - y' - 1)} \dfrac{\displaystyle\prod_{(x', y') \in A_{\lambda, i, (x,y)}^{r}} \varepsilon_1 (x - x' + 1) + \varepsilon_2 (y - y' + 1)}{\displaystyle\prod_{(x', y') \in R_{\mu, i, (x,y)}^{r}} \varepsilon_1 (x - x') + \varepsilon_2 (y - y')},
\end{align*}
\item
\begin{align*}
& \dfrac{\displaystyle\prod_{\substack{s \in \mu\\ h_{\mu}(s) \equiv 0 \bmod N}} \varepsilon_1 (l_{\mu}(s) + 1) - \varepsilon_2 a_{\mu}(s)}{\displaystyle\prod_{\substack{s \in \lambda\\ h_{\lambda}(s) \equiv 0 \bmod N}} \varepsilon_1 (l_{\lambda}(s) + 1) - \varepsilon_2 a_{\lambda}(s)} \\
& \\
&= (-1)^{\# A_{\lambda, i, (x,y)}^{r} - \# R_{\mu, i, (x,y)}^{r}} \\
& \dfrac{\displaystyle\prod_{(x', y') \in R_{\mu, i, (x,y)}^{l}} \varepsilon_1 (x - x') + \varepsilon_2 (y - y')}{\displaystyle\prod_{(x', y') \in A_{\lambda, i, (x,y)}^{l}} \varepsilon_1 (x - x' + 1) + \varepsilon_2 (y - y' + 1)} \dfrac{\displaystyle\prod_{(x', y') \in R_{\mu, i, (x,y)}^{r}} \varepsilon_1 (x - x' - 1) + \varepsilon_2 (y - y' - 1)}{\displaystyle\prod_{(x', y') \in A_{\lambda, i, (x,y)}^{r}} \varepsilon_1 (x - x') + \varepsilon_2 (y - y')},
\end{align*}
\item
\begin{align*}
&\dfrac{\langle P_{\lambda},  P_{\lambda} \rangle_F}{\langle P_{\mu},  P_{\mu} \rangle_F} \\
& = \prod_{(x', y') \in A_{\lambda, i, (x,y)}^{l}} \dfrac{\varepsilon_1 (x - x') + \varepsilon_2 (y - y')}{\varepsilon_1 (x - x' + 1) + \varepsilon_2 (y - y' + 1)} \prod_{(x', y') \in A_{\lambda, i, (x,y)}^{r}} \dfrac{\varepsilon_1 (x - x' + 1) + \varepsilon_2 (y - y' + 1)}{\varepsilon_1 (x - x') + \varepsilon_2 (y - y')} \\
& \prod_{(x', y') \in R_{\mu, i, (x,y)}^{l}} \dfrac{\varepsilon_1 (x - x') + \varepsilon_2 (y - y')}{\varepsilon_1 (x - x' - 1) + \varepsilon_2 (y - y' - 1)} \prod_{(x', y') \in R_{\mu, i, (x,y)}^{r}} \dfrac{\varepsilon_1 (x - x' - 1) + \varepsilon_2 (y - y' - 1)}{\varepsilon_1 (x - x') + \varepsilon_2 (y - y')}.
\end{align*}
\end{enumerate}
\end{lem}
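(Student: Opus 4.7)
My plan is to reduce (iii) to (i) and (ii) via the norm formula, then to prove (i) directly; the proof of (ii) will proceed identically. For the reduction, observe that by \eqref{eq:norm}, $\langle P_\lambda,P_\lambda\rangle_F/\langle P_\mu,P_\mu\rangle_F$ factorizes as the product of the left-hand sides of (i) and (ii); upon substituting the right-hand sides, the squared sign $(-1)^{2(\#A^r-\#R^r)}=1$ disappears and the four boundary-cell products combine precisely into the right-hand side of (iii).

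To prove (i), set $\mu=\lambda\setminus\{(x,y)\}$. Any cell $s\in\lambda\cap\mu$ lying neither in row $x$ nor in column $y$ has $a_\lambda(s)=a_\mu(s)$, $l_\lambda(s)=l_\mu(s)$, and hence $h_\lambda(s)=h_\mu(s)$; such cells contribute identical factors to numerator and denominator and cancel. What remains comes from three families: (a) cells $(x,y')$ with $y'<y$, where $a_\mu=a_\lambda-1$, $l_\mu=l_\lambda$, and $h_\mu=h_\lambda-1$; (b) cells $(x',y)$ with $x'<x$, where $l_\mu=l_\lambda-1$, $a_\mu=a_\lambda$, and $h_\mu=h_\lambda-1$; and (c) the removed cell $(x,y)$, with $h_\lambda(x,y)=1$, which contributes only when $N=1$.

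The heart of the argument will be the rewriting of the row and column factors in terms of coordinates of boundary $i$-cells. For a row cell $(x,y')$, using $l_\lambda(x,y')={}^t\lambda_{y'+1}-x-1$ and $a_\lambda(x,y')=y-y'$ one obtains
\[
\varepsilon_1 l_\lambda(x,y')-\varepsilon_2(a_\lambda(x,y')+1) = -\bigl(\varepsilon_1(x-{}^t\lambda_{y'+1}+1)+\varepsilon_2(y-y'+1)\bigr),
\]
\[
\varepsilon_1 l_\mu(x,y')-\varepsilon_2(a_\mu(x,y')+1) = -\bigl(\varepsilon_1(x-({}^t\lambda_{y'+1}-1))+\varepsilon_2(y-y')\bigr).
\]
Setting $(x',y')=({}^t\lambda_{y'+1},y')$ in the first case and $(x',y')=({}^t\lambda_{y'+1}-1,y')$ in the second recovers, up to a sign of $-1$ per row cell, the coordinate factors appearing in $\prod_{A^r}$ and $\prod_{R^r}$ on the right-hand side. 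I then use the descriptions \eqref{eq:SetR}--\eqref{eq:SetA} together with the residue identity \eqref{eq:(x,y)} to see that $h_\lambda(x,y')\equiv 0\pmod N$ (respectively $\equiv 1\pmod N$) is equivalent to the target cell being an addable $i$-cell of $\lambda$ below $(x,y)$ (respectively a removable $i$-cell of $\mu$ below $(x,y)$). This produces bijections between row cells contributing to the numerator (resp.\ denominator) and $A^r_{\lambda,i,(x,y)}$ (resp.\ $R^r_{\mu,i,(x,y)}$), with total accumulated sign $(-1)^{\#A^r-\#R^r}$. A symmetric analysis of the column cells $(x',y)$ with $x'<x$, mapping each to the boundary cell $(x',\lambda_{x'+1})$ or $(x',\lambda_{x'+1}-1)$, produces the $\prod_{A^l}/\prod_{R^l}$ factor with no additional sign, since the corresponding rewriting is orientation-preserving. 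Identity (ii) follows by exactly the same method with $\varepsilon_1(l+1)-\varepsilon_2 a$ in place of $\varepsilon_1 l-\varepsilon_2(a+1)$.

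The principal obstacle will be the combinatorial matching: verifying that the hook-length residue condition on a row (or column) cell is precisely equivalent to the residue condition for the corresponding target boundary cell, and that the $\pm 1$ coordinate shifts agree with those appearing in the four boundary products. The sign asymmetry between row (which picks up $-1$) and column (no sign) rewritings is exactly what accounts for the overall sign $(-1)^{\#A^r-\#R^r}$ in the statements of (i) and (ii).
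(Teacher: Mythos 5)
Your proposal is correct and follows essentially the same route as the paper: cancel all cells off the removed cell's row and column, rewrite the surviving row factors via $x'={}^t\lambda_{y'+1}$ (resp.\ ${}^t\lambda_{y'+1}-1$) and the column factors via $y'=\lambda_{x'+1}$ (resp.\ $\lambda_{x'+1}-1$), match the hook-residue condition with the $i$-cell condition on the boundary cells, extract the sign $(-1)^{\#A^{r}-\#R^{r}}$ from the row (column-index) family, and deduce (iii) from (i), (ii) and the norm formula \eqref{eq:norm}. Your explicit remark that the removed cell itself only matters when $N=1$ is a small point the paper leaves implicit, but otherwise the two arguments coincide.
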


\begin{proof}
Let us prove (i).
All factors cancel except concerning with cells
\begin{enumerate}
\item[(L)] $s = (x', y)$ for $0 \leq x' < x$; or
\item[(R)] $s = (x, y')$ for $0 \leq y' < y$.
\end{enumerate}
In the case (L) we have
\[
l_{\lambda}(s) = x - x',\ l_{\mu}(s) = x - x' - 1,
\]
\[
a_{\lambda}(s) = a_{\mu}(s) = \lambda_{x'+1} - (y+1).
\]
If we put $y' = \lambda_{x'+1}$ so that $h_{\lambda}(s) \equiv 0 \bmod N$ is equivalent to that $(x', y')$ is an $i$-cell, then the numerator is
\[
\prod_{\substack{s = (x', y) \in \lambda\\ h_{\lambda}(s) \equiv 0 \bmod N\\ 0 \leq x' < x}} \varepsilon_1 l_{\lambda}(s) - \varepsilon_2 (a_{\lambda}(s) + 1) = \prod_{0 \leq x' < x} \varepsilon_1 (x -x ') - \varepsilon_2 (y' - y).
\]
If we put $y' = \lambda_{x'+1} - 1$ so that $h_{\mu}(s) \equiv 0 \bmod N$ is equivalent to that $(x', y')$ is an $i$-cell, then the denominator is
\[
\displaystyle\prod_{\substack{s = (x', y) \in \mu\\ h_{\mu}(s) \equiv 0 \bmod N\\ 0 \leq x' < x}} \varepsilon_1 l_{\mu}(s) - \varepsilon_2 (a_{\mu}(s) + 1) = \prod_{0 \leq x' < x} \varepsilon_1 (x - x ' - 1) - \varepsilon_2 (y' - y + 1).
\]
Then their ratio is given by
\[
\dfrac{\displaystyle\prod_{(x', y') \in A_{\lambda, i, (x,y)}^{l}} \varepsilon_1 (x - x') + \varepsilon_2 (y - y')}{\displaystyle\prod_{(x', y') \in R_{\mu, i, (x,y)}^{l}} \varepsilon_1 (x - x' - 1) + \varepsilon_2 (y - y' - 1)}
\]
after cancellations.
In the case (R) we have
\[
l_{\lambda}(s) = l_{\mu}(s) = {}^t\lambda_{y'+1} - (x+1),
\]
\[
a_{\lambda}(s) = y - y',\  a_{\mu}(s) = y - y' - 1.
\]
If we put $x' = {}^t\lambda_{y'+1}$ so that $h_{\lambda}(s) \equiv 0 \bmod N$ is equivalent to that $(x', y')$ is an $i$-cell, then the numerator is
\[
\prod_{\substack{s = (x,y') \in \lambda\\ h_{\lambda}(s) \equiv 0 \bmod N\\ 0 \leq y' < y}} \varepsilon_1 l_{\lambda}(s) - \varepsilon_2 (a_{\lambda}(s) + 1) = \prod_{0 \leq y' < y} \varepsilon_1 (x ' - x - 1) - \varepsilon_2 (y - y' + 1).
\]
If we put $x' = {}^t\lambda_{y'+1} - 1$ so that $h_{\mu}(s) \equiv 0 \bmod N$ is equivalent to that $(x', y')$ is an $i$-cell, then the denominator is
\[
\displaystyle\prod_{\substack{s = (x,y') \in \mu\\ h_{\mu}(s) \equiv 0 \bmod N\\ 0 \leq y' < y}} \varepsilon_1 l_{\mu}(s) - \varepsilon_2 (a_{\mu}(s) + 1) = \prod_{0 \leq y' < y} \varepsilon_1 (x ' - x) - \varepsilon_2 (y - y').
\]
Then their ratio is
\[
(-1)^{\# A_{\lambda, i, (x,y)}^{r} - \# R_{\mu, i, (x,y)}^{r}} \dfrac{\displaystyle\prod_{(x', y') \in A_{\lambda, i, (x,y)}^{r}} \varepsilon_1 (x - x' + 1) + \varepsilon_2 (y - y' + 1)}{\displaystyle\prod_{(x', y') \in R_{\mu, i, (x,y)}^{r}} \varepsilon_1 (x - x') + \varepsilon_2 (y - y')}.
\]
The proof of (i) is complete.

We prove (ii) by a similar argument.
In the case (L), if we put $y' = \lambda_{x'+1} - 1$ so that $h_{\mu}(s) \equiv 0 \bmod N$ is equivalent to that $(x', y')$ is an $i$-cell, then the numerator is
\[
\displaystyle\prod_{\substack{s = (x', y) \in \mu\\ h_{\mu}(s) \equiv 0 \bmod N\\ 0 \leq x' < x}} \varepsilon_1 (l_{\mu}(s) + 1) - \varepsilon_2 a_{\mu}(s) = \prod_{0 \leq x' < x} \varepsilon_1 (x - x ') - \varepsilon_2 (y' - y).
\]
If we put $y' = \lambda_{x'+1}$ so that $h_{\lambda}(s) \equiv 0 \bmod N$ is equivalent to that $(x', y')$ is an $i$-cell, then the denominator is
\[
\prod_{\substack{s = (x',y) \in \lambda\\ h_{\lambda}(s) \equiv 0 \bmod N\\ 0 \leq x' < x}} \varepsilon_1 (l_{\lambda}(s) + 1) - \varepsilon_2 a_{\lambda}(s) = \prod_{0 \leq x' < x} \varepsilon_1 (x - x ' + 1) - \varepsilon_2 (y' - y - 1).
\]
Then their ratio is
\[
\dfrac{\displaystyle\prod_{(x', y') \in R_{\mu, i, (x,y)}^{l}} \varepsilon_1 (x - x') + \varepsilon_2 (y - y')}{\displaystyle\prod_{(x', y') \in A_{\lambda, i, (x,y)}^{l}} \varepsilon_1 (x - x' + 1) + \varepsilon_2 (y - y' + 1)}.
\]
In the case (R), if we put $x' = {}^t\lambda_{y'+1} - 1$ so that $h_{\mu}(s) \equiv 0 \bmod N$ is equivalent to that $(x', y')$ is an $i$-cell, then the numerator is
\[
\displaystyle\prod_{\substack{s = (x, y') \in \mu\\ h_{\mu}(s) \equiv 0 \bmod N\\ 0 \leq y' < y}} \varepsilon_1 (l_{\mu}(s) + 1) - \varepsilon_2 a_{\mu}(s) = \prod_{0 \leq y' < y} \varepsilon_1 (x ' - x + 1) - \varepsilon_2 (y - y' - 1).
\]
If we put $x' = {}^t\lambda_{y'+1}$ so that $h_{\lambda}(s) \equiv 0 \bmod N$ is equivalent to that $(x', y')$ is an $i$-cell, then the denominator is
\[
\prod_{\substack{s = (x, y') \in \lambda\\ h_{\lambda}(s) \equiv 0 \bmod N\\ 0 \leq y' < y}} \varepsilon_1 (l_{\lambda}(s) + 1) - \varepsilon_2 a_{\lambda}(s) = \prod_{0 \leq y' < y} \varepsilon_1 (x ' - x) - \varepsilon_2 (y - y').
\]
Then their ratio is
\[
(-1)^{\# A_{\lambda, i, (x,y)}^{r} - \# R_{\mu, i, (x,y)}^{r}} \dfrac{\displaystyle\prod_{(x', y') \in R_{\mu, i, (x,y)}^{r}} \varepsilon_1 (x - x' - 1) + \varepsilon_2 (y - y' - 1)}{\displaystyle\prod_{(x', y') \in A_{\lambda, i, (x,y)}^{r}} \varepsilon_1 (x - x') + \varepsilon_2 (y - y')}.
\]
The proof of (ii) is complete.

The assertion (iii) follows from (i) and (ii).
\end{proof}

\section{Explicit formula}\label{sec:explicit}

\subsection{Gelfand-Zetlin basis}

Recall the set $M$ defined in Subsection~\ref{subsection:preliminaries}.
We associate $r_1, \ldots, r_l$ and $p_1, \ldots, p_l$ with an element ${\bf m} \in M$. 

Let $\mathcal{S}_{\bf m}$ be the set which consists of collections of integers $\Lambda = (\lambda_{i,p}^{(s)})_{i,p,s}$, where indices run through $i = 1, \ldots, N,$ $p = 1, \ldots, i,$ $s = 1, \ldots, l$, satisfying:
\[
\lambda_{N,p}^{(s)} = 
\begin{cases}
1 \text{ if } 1 \leq p \leq p_s, \\
0 \text{ otherwise,} 
\end{cases}
\]
\[
\lambda_{i,p}^{(s)} \geq \lambda_{i-1,p}^{(s)} \geq \lambda_{i,p+1}^{(s)}.
\]
Although the conditions depend only on the data $p_1, \ldots, p_l$, we distinguish them for different ${\bf m}$.

For each $s$, a collection $\Lambda^{(s)} = (\lambda_{i,p}^{(s)})_{i,p}$ of integers is called a Gelfand-Zetlin scheme.
In fact, this is a restricted class of usual Gelfand-Zetlin schemes.
A Gelfand-Zetlin scheme used by Nazarov-Tarasov~\cite[Section~2]{MR1605817} is defined for any dominant integral weight $\lambda$ of $\mathfrak{gl}_N$.
The above definition corresponds to the fundamental weight $\varpi_{p_s}$.

Let $\Lambda = (\lambda_{i,p}^{(s)}) \in \mathcal{S}_{\bf m}$ be a tuple of Gelfand-Zetlin schemes associated with ${\bf m} \in M$.
For a fixed $(s,i)$ there exists a unique $p$ such that $\lambda_{i,p}^{(s)} = 1$ and $\lambda_{i,p+1}^{(s)} = 0$.
We write $l_{i}^{(s)}$ for such $p$.

The $Y_{\hbar'}(\mathfrak{gl}_N)$-module $V(\varpi_{p_1})_{a_1} \otimes \cdots \otimes V(\varpi_{p_l})_{a_l}$ with
\[
a_s = t r_s + \hbar'(p_1 + \cdots + p_s - \dfrac{3}{2})
\]
has a basis $\{\xi_{\Lambda}\}_{\Lambda \in \mathcal{S}_{\bf m}}$, the Gelfand-Zetlin basis constructed by Nazarov-Tarasov~\cite[Section~3]{MR1605817}.
Each $\xi_{\Lambda}$ is a simultaneous eigenvector of $A_{\hbar'}(\mathfrak{gl}_N)$ and explicit formulas for the actions of $A_i(u), B_i(u), C_i(u)$ are known.

The element of $\mathcal{S}_{\bf m}$ corresponding to the highest weight vector is $(\kappa_{i,p}^{(s)})$, where
\[
\kappa_{i,p}^{(s)} =
\begin{cases}
1 \text{ if } 1 \leq p \leq \min\{i, p_s\}, \\
0 \text{ otherwise.} 
\end{cases}
\]
We have $\kappa_{i,p}^{(s)} \geq \lambda_{i,p}^{(s)}$ for every $(s,i,p)$ and $\Lambda = (\lambda_{i,p}^{(s)}) \in \mathcal{S}_{\bf m}$.
For $\Lambda \in \mathcal{S}_{\bf m}$ and $(s,i,p)$, $\Lambda \pm \delta_{i,p}^{(s)}$ denotes the collection of integers whose entries are the same as in $\Lambda$ except for the $(s,i,p)$-entry $\lambda_{i,p}^{(s)} \pm 1$.

For a fixed $i = 1, \ldots, N-1$ and $\Lambda \in \mathcal{S}_{\bf m}$, we define 
\begin{align*}
R_{\Lambda, i} &= \{ (s,p) \mid \Lambda + \delta_{i,p}^{(s)} \in \mathcal{S}_{\bf m}\}, \\
A_{\Lambda, i} &= \{ (s,p) \mid \Lambda - \delta_{i,p}^{(s)} \in \mathcal{S}_{\bf m}\}.
\end{align*}
Set
\[
\nu_{i,p}^{(s)} = \hbar'(p - 1 - \lambda_{i,p}^{(s)}) - a_s,
\]
where
\[
a_s = t r_s + \hbar'(p_1 + \cdots + p_s - \dfrac{3}{2}).
\]
Fix $(s,i,p)$ and set
\begin{align*}
K_{+,p'}^{(s')} &= \hbar'( p - p' + \kappa_{i+1, p'}^{(s')} ) - (a_s - a_{s'}),  \\
L_{+,p'}^{(s')} &= \hbar'( p - p' + \lambda_{i+1, p'}^{(s')} ) - (a_s - a_{s'}), \\
K_{-,p'}^{(s')} &= \hbar'( p - p' - 1 + \kappa_{i-1, p'}^{(s')}) - (a_s - a_{s'}), \\
L_{-,p'}^{(s')} &= \hbar'( p - p' - 1 + \lambda_{i-1, p'}^{(s')} ) - (a_s - a_{s'})
\end{align*}
for each $(s', p')$.
Put
\begin{align*}
\gamma_{i,p}^{(s)} &= \prod_{s'=1}^{l} \left( \prod_{p'=1}^{p} K_{+,p'}^{(s')} \prod_{p'=p+1}^{i+1} L_{+,p'}^{(s')} \prod_{p'=1}^{p-1} K_{-,p'}^{(s')} \prod_{p'=p}^{i-1} L_{-,p'}^{(s')} \right), \\
\beta_{i,p}^{(s)} &= \prod_{s'=1}^{l} \left( \prod_{p'=1}^{p} \dfrac{L_{+,p'}^{(s')}}{K_{+,p'}^{(s')}} \prod_{p'=1}^{p-1} \dfrac{L_{-,p'}^{(s')}}{K_{-,p'}^{(s')}} \right).
\end{align*}
These are introduced in \cite[p.\ 203 and p.\ 205]{MR1605817}.
In our case, note that $\lambda_{i,p}^{(s)} = 0$ if $(s,p) \in R_{\Lambda,i}$ and $\lambda_{i,p}^{(s)} = 1$ if $(s,p) \in A_{\Lambda,i}$.
Also note that the values $M^{(s)}$ in \cite{MR1605817} are all zero.
Hence we have these forms of $\gamma_{i,p}^{(s)}, \beta_{i,p}^{(s)}$.
Then the formulas given by Nazarov-Tarasov~\cite[Theorem~3.2, Theorem~3.5, Theorem~3.8]{MR1605817} read as
\begin{equation}
\left( \prod_{s=1}^{l}\prod_{p=1}^{i} (u - \hbar'(p - 1) + a_s) \right) A_i(u) \xi_{\Lambda} = \left( \prod_{s=1}^{l}\prod_{p=1}^{i} (u - \nu_{i,p}^{(s)}) \right) \xi_{\Lambda}, \label{eq:A_i(u)}
\end{equation}
\begin{equation}
\left( \prod_{s=1}^{l}\prod_{p=1}^{i} (u - \hbar'(p - 1) + a_s) \right) B_i(u) \xi_{\Lambda} = \sum_{(s,p) \in R_{\Lambda,i}}(-\gamma_{i,p}^{(s)}) \left( \prod_{(s',p') \neq (s,p)} \dfrac{u - \nu_{i,p'}^{(s')}}{\nu_{i,p}^{(s)} - \nu_{i,p'}^{(s')}} \right) \xi_{\Lambda + \delta_{i,p}^{(s)}}, \label{eq:B_i(u)} \\
\end{equation}
\begin{equation}
\left( \prod_{s=1}^{l}\prod_{p=1}^{i} (u - \hbar'(p - 1) + a_s) \right) C_i(u) \xi_{\Lambda} = \sum_{(s,p) \in A_{\Lambda,i}} \beta_{i,p}^{(s)} \left( \prod_{(s',p') \neq (s,p)} \dfrac{u - \nu_{i,p'}^{(s')}}{\nu_{i,p}^{(s)} - \nu_{i,p'}^{(s')}} \right) \xi_{\Lambda - \delta_{i,p}^{(s)}}. \label{eq:C_i(u)}
\end{equation}
Here we use the Lagrange interpolation to derive (\ref{eq:B_i(u)}), (\ref{eq:C_i(u)}) from the formulas in \cite{MR1605817} thanks to the fact that the left hand sides of (\ref{eq:B_i(u)}) and (\ref{eq:C_i(u)}) are polynomials in $u$ whose degree do not exceed $il - 1$ \cite[Proposition 3.1]{MR1605817}.

\begin{rem}
We need to swap $B_i(u)$ and $C_i(u)$ in the formulas of \cite{MR1605817} since the generator $T_{ij}^{(r)}$ in \cite{MR1605817} corresponds to our $T_{ji}^{(r)}$.
\end{rem}

\subsection{Gelfand-Zetlin schemes and partitions}

We give an identification between Gelfand-Zetlin schemes and partitions.
Let $\Lambda \in \mathcal{S}_{\bf m}$ be a tuple of Gelfand-Zetlin schemes.
We associate a sequence $(i_p^{(s)})$ with $\Lambda=(\lambda_{i,p}^{(s)})$ by
\[
i_p^{(s)} = \min\{i \mid \lambda_{i,p}^{(s)} \neq 0\}
\]
for $p=1,\ldots,p_s$.
We can recover $\Lambda$ by
\[
\lambda_{i,p}^{(s)} = \delta(i_{p}^{(s)} \leq i),
\]
where $\delta(P)$ takes $1$ if $P$ is true and $0$ if $P$ is false.
Note that $\lambda_{i,p}^{(s)} = 0$ if $p > p_s$ by definition.
Set $n = p_1 + \cdots +p_l$.
Define $j_a$ for $a = 1, \ldots, n$ by
\[
j_{p_1 + \cdots + p_s - p +1} = i_p^{(s)}
\]
for $p=1, \ldots, p_s$ and $s = 1, \ldots l$, that is, 
\[
(j_1, j_2, \ldots, j_n) = (i_{p_1}^{(1)}, \ldots, i_{1}^{(1)}, i_{p_2}^{(2)}, \ldots, i_{1}^{(2)},\ldots, i_{p_l}^{(l)}, \ldots, i_{1}^{(l)})
\]
and assign a partition $\lambda$ so that
\[
\lambda_a - a + 1 = j_a - N m_a.
\]
This gives a one-to-one correspondence between $\mathcal{S}_{\bf m}$ and $\mathcal{P}_{\bf m}$.
Note that we have $l(\lambda) \leq n$.

We can see that the subsets $R_{\Lambda,i}$ and $A_{\Lambda, i}$ correspond to $R_{\lambda,i}$ and $A_{\lambda,i}$ respectively, under this identification of $\Lambda$ and $\lambda$ as follows.
Suppose $(s,p) \in R_{\Lambda,i}$, then we have $i_{p}^{(s)} = i+1$.
If we put $x = p_1 + \cdots + p_s - p$ and $y=\lambda_{x+1} - 1$ then $(x, y) \in R_{\lambda,i}$.
Moreover $\Lambda + \delta_{i,p}^{(s)}$ corresponds to the partition obtained from $\lambda$ by removing the $i$-cell $(x,y)$.
Similarly suppose $(s,p) \in A_{\Lambda,i}$.
Then we have $i_{p}^{(s)} = i$.
If we put $x = p_1 + \cdots + p_s - p$ and $y=\lambda_{x+1}$ then $(x, y) \in A_{\lambda,i}$.
Moreover $\Lambda - \delta_{i,p}^{(s)}$ corresponds to the partition obtained from $\lambda$ by adding the $i$-cell $(x,y)$.
We remark that in the case $l(\lambda) = n$, the residue of $(l(\lambda), 0)$ is equal to $0$ since $n$ is a multiple of $N$.
Therefore we have $(l(\lambda), 0) \in A_{\lambda,0}$ and $\lambda$ does not correspond to any element of $A_{\Lambda,i}$ for $i=1, \ldots, N-1$.

We sometimes write $\xi_{\lambda}$ instead of $\xi_{\Lambda}$ under this identification.

\subsection{Explicit formula}

In the sequel, we regard the basis $\{\xi_{\lambda}\}_{\lambda \in \mathcal{P}_{\bf m}}$ as elements of $F_{\bf m} \cong \omega_{f(u; r)}^* ( V(\varpi_{p_1})_{a_1} \otimes \cdots \otimes V(\varpi_{p_l})_{a_l} )$ via pullback.
This does not affect the $Y_{\hbar}(\mathfrak{sl}_N)$-module structure.

First we show that the eigenvalue of $A_i(u)$ for $P_{\lambda}$ and that for $\xi_{\lambda}$ coincide, and then calculate $H_i(u) P_{\lambda}$.
We rewrite the formula (\ref{eq:A_i(u)}) for $A_i(u)$:
\begin{align*}
A_i(u) \xi_{\lambda} 
= g_i(u) \left( \prod_{s=1}^{l}\prod_{p=1}^{i} \dfrac{u - \hbar'(p - 1 - \lambda_{i,p}^{(s)}) + a_s}{u - \hbar'(p - 1) + a_s} \right) \xi_{\lambda}.
\end{align*}
We have
\begin{align*}
\prod_{s=1}^{l}\prod_{p=1}^{i} \dfrac{u - \hbar'(p - 1 - \lambda_{i,p}^{(s)}) + a_s}{u - \hbar'(p - 1) + a_s} = \prod_{s=1}^{l}\prod_{p=1}^{p_s} \dfrac{u - \hbar'(p - 1 - \delta( i_{p}^{(s)} \leq i )) + a_s}{u - \hbar'(p - 1) + a_s}
\end{align*}
since
\[
\lambda_{i,p}^{(s)} = \begin{cases}
\delta( i_{p}^{(s)} \leq i ) \text{ if } 1 \leq p \leq p_s, \\
0 \text{ if } p > p_s.
\end{cases}
\]
We have
\begin{align}
-\hbar'(p - 1) + a_s &= t r_s + \hbar'(p_1 + \cdots + p_s - p + 1 - \dfrac{3}{2}) \nonumber \\
&= t m_{p_1 + \cdots + p_s - p +1} + \hbar'(p_1 + \cdots + p_s - p + 1 - \dfrac{3}{2}) \label{eq:3.3.1}
\end{align}
for $p= 1, \ldots, p_s$.
If we put
\[
a = p_1 + \cdots + p_s - p + 1
\]
and vary $p = 1, \ldots, p_s$, $s=1, \ldots, l$, then $a$ runs through $1, \ldots, n$.
Hence we have
\[
\prod_{s=1}^{l}\prod_{p=1}^{p_s} \dfrac{u - \hbar'(p - 1 - \delta( i_{p}^{(s)} \leq i )) + a_s}{u - \hbar'(p - 1) + a_s} = \prod_{a=1}^n \dfrac{u + tm_a + \hbar'(a - \dfrac{3}{2} + \delta( j_a \leq i ))}{u + t m_a + \hbar'(a - \dfrac{3}{2})}. 
\]
Comparing with (\ref{eq:eigenvalue}), we conclude that $\xi_{\lambda}$ is a constant multiple of $P_{\lambda}$ since both $\{P_{\lambda}\}_{\lambda \in \mathcal{P}}$ and $\{\xi_{\lambda}\}_{\lambda \in \mathcal{P}}$ are eigenbases with pairwise distinct eigenvalues.
Hence we have
\[
P_{\lambda} = \alpha_{\lambda} \xi_{\lambda}
\]
with some nonzero $\alpha_{\lambda} \in \mathbb{C}(\varepsilon_1, \varepsilon_2)$.
Recall the formula (\ref{eq:H_{i,r}}):
\[
H_i(u) = \dfrac{A_{i-1}(u - \dfrac{1}{2}\hbar(i-1)) A_{i+1}(u - \dfrac{1}{2}\hbar(i+1))}{A_i(u - \dfrac{1}{2}\hbar(i-1)) A_i(u - \dfrac{1}{2}\hbar(i+1))}.
\]
The eigenvalue of
\[
\dfrac{A_{i-1}(u - \dfrac{1}{2}\hbar(i-1))}{A_i(u - \dfrac{1}{2}\hbar(i-1))}
\] is given by
\begin{equation}
 \dfrac{g_{i-1}(u - \dfrac{1}{2}\hbar(i-1))}{g_i(u - \dfrac{1}{2}\hbar(i-1))} \prod_{a=1}^n \dfrac{u + tm_a - \hbar(a - \dfrac{3}{2} + \delta( j_a \leq i-1 )) - \dfrac{1}{2}\hbar(i-1)}{u + tm_a - \hbar(a - \dfrac{3}{2} + \delta( j_a \leq i)) - \dfrac{1}{2}\hbar(i-1)}. \label{eq:factor1}
\end{equation}
We have
\begin{align*}
& \prod_{a=1}^n \dfrac{u + tm_a - \hbar(a - \dfrac{3}{2} + \delta( j_a \leq i-1 )) - \dfrac{1}{2}\hbar(i-1)}{u + tm_a - \hbar(a - \dfrac{3}{2} + \delta( j_a \leq i)) - \dfrac{1}{2}\hbar(i-1)} \\
&= \prod_{j_a = i} \dfrac{u + tm_a - \hbar(a - \dfrac{3}{2}) - \dfrac{1}{2}\hbar(i-1)}{u + tm_a - \hbar(a - \dfrac{3}{2} + 1) - \dfrac{1}{2}\hbar(i-1)} \\
&= \prod_{j_a = i} \dfrac{u + tm_a - \hbar(a + \dfrac{1}{2}i - 2)}{u + tm_a - \hbar(a + \dfrac{1}{2}i - 1)} \\
&= \prod_{(a-1, \lambda_a) \in A_{\lambda,i}} \dfrac{u + tm_a - \hbar(a + \dfrac{1}{2}i - 2)}{u + tm_a - \hbar(a + \dfrac{1}{2}i - 1)} \prod_{\substack{ m_{a-1} = m_a \\ j_{a-1} = i+1 \\ j_a = i}} \dfrac{u + tm_a - \hbar(a + \dfrac{1}{2}i - 2)}{u + tm_a - \hbar(a + \dfrac{1}{2}i - 1)}.
\end{align*}
The last equality follows from (\ref{eq:SetA}), the description of $A_{\lambda, i}$.
Recall that if $\lambda$ satisfies $l(\lambda) = n$, then $(l(\lambda), 0) \in A_{\lambda,0}$ holds, hence it does not appear in the product.
Similarly the eigenvalue of
\[
\dfrac{A_{i+1}(u - \dfrac{1}{2}\hbar(i+1))}{A_i(u - \dfrac{1}{2}\hbar(i+1))}
\]
is given by
\begin{equation}
\dfrac{g_{i+1}(u - \dfrac{1}{2}\hbar(i+1))}{g_i(u - \dfrac{1}{2}\hbar(i+1))} \prod_{a = 1}^n \dfrac{u + tm_a - \hbar(a - \dfrac{3}{2} + \delta( j_a \leq i+1 )) - \dfrac{1}{2}\hbar(i+1)}{u + tm_a - \hbar(a - \dfrac{3}{2} + \delta( j_a \leq i)) - \dfrac{1}{2}\hbar(i+1)}. \label{eq:factor2}
\end{equation}
We have
\begin{align*}
& \prod_{a = 1}^n \dfrac{u + tm_a - \hbar(a - \dfrac{3}{2} + \delta( j_a \leq i+1 )) - \dfrac{1}{2}\hbar(i+1)}{u + tm_a - \hbar(a - \dfrac{3}{2} + \delta( j_a \leq i)) - \dfrac{1}{2}\hbar(i+1)} \\
&= \prod_{j_a = i+1} \dfrac{u + tm_a - \hbar(a - \dfrac{3}{2} + 1) - \dfrac{1}{2}\hbar(i+1)}{u + tm_a - \hbar(a - \dfrac{3}{2}) - \dfrac{1}{2}\hbar(i+1)} \\
&= \prod_{j_a = i+1} \dfrac{u + tm_a - \hbar(a + \dfrac{1}{2}i)}{u + tm_a - \hbar(a + \dfrac{1}{2}i - 1)} \\
&= \prod_{(a-1, \lambda_a - 1) \in R_{\lambda,i}} \dfrac{u + tm_a - \hbar(a + \dfrac{1}{2}i)}{u + tm_a - \hbar(a + \dfrac{1}{2}i - 1)} \prod_{\substack{ m_{a} = m_{a+1} \\ j_{a} = i+1 \\ j_{a+1} = i}} \dfrac{u + tm_a - \hbar(a + \dfrac{1}{2}i)}{u + tm_a - \hbar(a + \dfrac{1}{2}i - 1)}.
\end{align*}
The last equality follows from (\ref{eq:SetR}), the description of $R_{\lambda, i}$.
Consider the product of (\ref{eq:factor1}) and (\ref{eq:factor2}).
We have
\[
\dfrac{g_{i-1}(u - \dfrac{1}{2}\hbar(i-1)) g_{i+1}(u - \dfrac{1}{2}\hbar(i+1))}{g_i(u - \dfrac{1}{2}\hbar(i-1)) g_i(u - \dfrac{1}{2}\hbar(i+1))} = 1
\]
and
\[
\prod_{\substack{ m_{a-1} = m_a \\ j_{a-1} = i+1 \\ j_a = i}} \dfrac{u + tm_a - \hbar(a + \dfrac{1}{2}i - 2)}{u + tm_a - \hbar(a + \dfrac{1}{2}i - 1)} \prod_{\substack{ m_{a} = m_{a+1} \\ j_{a} = i+1 \\ j_{a+1} = i}} \dfrac{u + tm_a - \hbar(a + \dfrac{1}{2}i)}{u + tm_a - \hbar(a + \dfrac{1}{2}i - 1)} = 1.
\]
In both cases when $(x,y) = (a-1, \lambda_a) \in A_{\lambda,i}$ and $(x,y) = (a-1, \lambda_a - 1) \in R_{\lambda,i}$, we have
\begin{align}
tm_a &= N \varepsilon_2 m_a \nonumber \\
&= \varepsilon_2 (x - y + i) \label{eq:3.3.2}
\end{align}
by (\ref{eq:(x,y)}).
Therefore we have
\begin{align*}
& H_i(u) P_{\lambda}\\
&= \prod_{(x,y) \in A_{\lambda,i}}\dfrac{u - \varepsilon_1(x + \dfrac{1}{2}i - 1) - \varepsilon_2(y - \dfrac{1}{2}i - 1)}{u - \varepsilon_1(x + \dfrac{1}{2}i) - \varepsilon_2(y - \dfrac{1}{2}i)} \prod_{(x,y) \in R_{\lambda,i}}\dfrac{u - \varepsilon_1(x + \dfrac{1}{2}i + 1) - \varepsilon_2(y - \dfrac{1}{2}i + 1)}{u - \varepsilon_1(x + \dfrac{1}{2}i) - \varepsilon_2(y - \dfrac{1}{2}i)} P_{\lambda}.
\end{align*}

Next we calculate $X_{i,r}^{\pm} P_{\lambda}$.
Let $\lambda, \mu$ be partitions and define $E_{\lambda \mu}^{(r)}, F_{\lambda \mu}^{(r)}$ by
\begin{align*}
X_{i,r}^{+} P_{\lambda} &= \sum_{\mu} E_{\lambda \mu}^{(r)} P_{\mu}, \\
X_{i,r}^{-} P_{\lambda} &= \sum_{\mu} F_{\lambda \mu}^{(r)} P_{\mu}.
\end{align*}
Similarly we define $\tilde{E}_{\lambda \mu}^{(r)}, \tilde{F}_{\lambda \mu}^{(r)}$ by
\begin{align*}
X_{i,r}^{+} \xi_{\lambda} &= \sum_{\mu} \tilde{E}_{\lambda \mu}^{(r)} \xi_{\mu}, \\
X_{i,r}^{-} \xi_{\lambda} &= \sum_{\mu} \tilde{F}_{\lambda \mu}^{(r)} \xi_{\mu}.\end{align*}
Then we have
\begin{equation}
E_{\lambda \mu}^{(r)} F_{\mu \lambda}^{(r)} = \tilde{E}_{\lambda \mu}^{(r)} \tilde{F}_{\mu \lambda}^{(r)} \label{eq:product}
\end{equation}
since $P_{\lambda}$ is a scalar multiple of $\xi_{\lambda}$.
By Proposition~\ref{prop:adjoint}, we have
\[
\langle X_{i,r}^{+} P_{\lambda}, P_{\mu} \rangle_{F} = \langle P_{\lambda}, X_{i,r}^{-} P_{\mu} \rangle_{F},
\]
and hence the equality
\[
E_{\lambda \mu}^{(r)} \langle P_{\mu}, P_{\mu} \rangle_{F} = F_{\mu \lambda}^{(r)} \langle P_{\lambda}, P_{\lambda} \rangle_{F}.
\]
Substituting (\ref{eq:product}) to this, we have
\[
\left( E_{\lambda \mu}^{(r)} \right)^2 = \tilde{E}_{\lambda \mu}^{(r)} \tilde{F}_{\mu \lambda}^{(r)} \dfrac{\langle P_{\lambda}, P_{\lambda} \rangle_{F}}{\langle P_{\mu}, P_{\mu} \rangle_{F}}.
\]
On the right-hand side, $\tilde{E}_{\lambda \mu}^{(r)} \tilde{F}_{\mu \lambda}^{(r)}$ can be calculated and $\dfrac{\langle P_{\lambda}, P_{\lambda} \rangle_{F}}{\langle P_{\mu}, P_{\mu} \rangle_{F}}$ has been calculated in Lemma~\ref{lem:ratio}.
Hence we can determine $E_{\lambda \mu}^{(r)}$ up to sign.
Our goal is to calculate $\tilde{E}_{\lambda \mu}^{(r)} \tilde{F}_{\mu \lambda}^{(r)}$ explicitly and to determine the sign.

By the formulas (\ref{eq:A_i(u)}), (\ref{eq:B_i(u)}), (\ref{eq:C_i(u)}), we have
\begin{align*}
A_i(u)^{-1} B_i(u) \xi_{\Lambda} &= \sum_{(s,p) \in R_{\Lambda,i}} \dfrac{-\gamma_{i,p}^{(s)}}{u - \hbar'(p-2) + a_s} \left( \prod_{(s',p') \neq (s,p)} \dfrac{1}{\nu_{i,p}^{(s)} - \nu_{i,p'}^{(s')}} \right) \xi_{\Lambda + \delta_{i,p}^{(s)}}, \\
C_i(u) A_i(u)^{-1} \xi_{\Lambda} &= \sum_{(s,p) \in A_{\Lambda,i}} \dfrac{\beta_{i,p}^{(s)}}{u - \hbar'(p-2) + a_s} \left( \prod_{(s',p') \neq (s,p)} \dfrac{1}{\nu_{i,p}^{(s)} - \nu_{i,p'}^{(s')}} \right) \xi_{\Lambda - \delta_{i,p}^{(s)}}.
\end{align*}
We have
\[
-\hbar'(p-2) + a_s = t m_a - \hbar(a- \dfrac{1}{2})
\]
by (\ref{eq:3.3.1}) where we put $a = p_1 + \cdots + p_s - p + 1$.
Let $(x,y) = (a - 1, \lambda_a - 1) \in R_{\lambda,i}$ be the cell corresponding to $(s,p)$ in the case $(s,p) \in R_{\Lambda,i}$, and $(x,y) = (a - 1, \lambda_a) \in A_{\lambda,i}$ in the case $(s,p) \in A_{\Lambda,i}$.
In both cases, we have
\[
-\hbar'(p-2) + a_s = \varepsilon_2(x-y+i) - \hbar(x + \dfrac{1}{2})
\]
by (\ref{eq:3.3.2}).

Recall the formulas (\ref{eq:X_{i,r}^+}) and (\ref{eq:X_{i,r}^-}):
\begin{align*}
-\hbar \sum_{r \geq 0} X_{i,r}^+ u^{-r-1} &= A_i(u - \dfrac{1}{2}\hbar(i-1))^{-1} B_i(u - \dfrac{1}{2}\hbar(i-1)), \\
-\hbar \sum_{r \geq 0} X_{i,r}^- u^{-r-1} &= C_i(u - \dfrac{1}{2}\hbar(i-1)) A_i(u - \dfrac{1}{2}\hbar(i-1))^{-1}. 
\end{align*}
Hence we see that 
\begin{align*}
\tilde{E}_{\lambda \mu}^{(r)} & = 
\begin{cases}
-\hbar^{-1} \left( \varepsilon_1 (x + \dfrac{1}{2}i) + \varepsilon_2 (y - \dfrac{1}{2}i) \right)^r  \left(  (-\gamma_{i,p}^{(s)}) \displaystyle\prod_{(s',p') \neq (s,p)} \dfrac{1}{\nu_{i,p}^{(s)} - \nu_{i,p'}^{(s')}} \right) \\
\quad \text{ if $\mu$ corresponds to $\Lambda + \delta_{i,p}^{(s)}$,} \\
0 \text{ otherwise,}
\end{cases}\\
\tilde{F}_{\lambda \mu}^{(r)} & = 
\begin{cases}
-\hbar^{-1} \left( \varepsilon_1 (x + \dfrac{1}{2}i) + \varepsilon_2 (y - \dfrac{1}{2}i) \right)^r  \left( \beta_{i,p}^{(s)} \displaystyle\prod_{(s',p') \neq (s,p)} \dfrac{1}{\nu_{i,p}^{(s)} - \nu_{i,p'}^{(s')}} \right) \\
\quad \text{ if $\mu$ corresponds to $\Lambda - \delta_{i,p}^{(s)}$,} \\
0 \text{ otherwise.}
\end{cases}
\end{align*}
Set
\begin{align*}
\tilde{E}_{\lambda \mu} &= -\hbar^{-1} (-\gamma_{i,p}^{(s)}) \displaystyle\prod_{(s',p') \neq (s,p)} \dfrac{1}{\nu_{i,p}^{(s)} - \nu_{i,p'}^{(s')}},\\
\tilde{F}_{\lambda \mu} &= -\hbar^{-1} \beta_{i,p}^{(s)} \displaystyle\prod_{(s',p') \neq (s,p)} \dfrac{1}{\nu_{i,p}^{(s)} - \nu_{i,p'}^{(s')}}.
\end{align*}
Note that $\gamma_{i,p}^{(s)}$ and $\beta_{i,p}^{(s)}$ can be written as
\begin{align*}
\gamma_{i,p}^{(s)} &= \prod_{s'=1}^{l} \left( \prod_{p'=1}^{p} K_{+,p'}^{(s')} \prod_{p'=p+1}^{i+1} L_{+,p'}^{(s')} \prod_{p'=1}^{p-1} K_{-,p'}^{(s')} \prod_{p'=p}^{i-1} L_{-,p'}^{(s')} \right) \\
&= \prod_{s'=1}^{l} \left( \prod_{p'=1}^{i+1} L_{+,p'}^{(s')} \prod_{p'=1}^{p} \dfrac{K_{+,p'}^{(s')}}{L_{+,p'}^{(s')}} \prod_{p'=1}^{p-1} K_{-,p'}^{(s')} \prod_{p'=p}^{i-1} L_{-,p'}^{(s')} \right)
\end{align*}
and
\begin{align*}
\beta_{i,p}^{(s)} &= \prod_{s'=1}^{l} \left( \prod_{p'=1}^{p} \dfrac{L_{+,p'}^{(s')}}{K_{+,p'}^{(s')}} \prod_{p'=1}^{p-1} \dfrac{L_{-,p'}^{(s')}}{K_{-,p'}^{(s')}} \right) \\
&= \prod_{s'=1}^{l} \left( \prod_{p'=1}^{i-1} L_{-,p'}^{(s')} \prod_{p'=1}^{p} \dfrac{L_{+,p'}^{(s')}}{K_{+,p'}^{(s')}} \prod_{p'=1}^{p-1} \dfrac{1}{K_{-,p'}^{(s')}} \prod_{p'=p}^{i-1} \dfrac{1}{L_{-,p'}^{(s')}} \right).
\end{align*}
We calculate
\[
\left( \prod_{s'=1}^{l} \prod_{p'=1}^{i+1} L_{+,p'}^{(s')} \right) \left( \prod_{(s',p') \neq (s,p)} \dfrac{1}{\nu_{i,p}^{(s)} - \nu_{i,p'}^{(s')}} \right)
= \left( \dfrac{\displaystyle\prod_{p'=1}^{i+1} L_{+,p'}^{(s)}}{\displaystyle\prod_{\substack{1 \leq p'\leq i\\ p' \neq p}} (\nu_{i,p}^{(s)} - \nu_{i,p'}^{(s)})} \right) \left( \prod_{s' \neq s} \dfrac{\displaystyle\prod_{p'=1}^{i+1} L_{+,p'}^{(s')}}{\displaystyle\prod_{p'=1}^{i} (\nu_{i,p}^{(s)} - \nu_{i,p'}^{(s')})} \right)
\]
and
\[
\left( \prod_{s'=1}^{l} \prod_{p'=1}^{i-1} L_{-,p'}^{(s')} \right) \left( \prod_{(s',p') \neq (s,p)} \dfrac{1}{\nu_{i,p}^{(s)} - \nu_{i,p'}^{(s')}} \right) = \left( \dfrac{\displaystyle\prod_{p'=1}^{i-1} L_{-,p'}^{(s)}}{\displaystyle\prod_{\substack{1 \leq p'\leq i\\ p' \neq p}} (\nu_{i,p}^{(s)} - \nu_{i,p'}^{(s)})} \right) \left( \prod_{s' \neq s} \dfrac{\displaystyle\prod_{p'=1}^{i-1} L_{-,p'}^{(s')}}{\displaystyle\prod_{p'=1}^{i} (\nu_{i,p}^{(s)} - \nu_{i,p'}^{(s')})} \right)
\]
in the next lemma.

\begin{lem}\label{lem:factor}
\begin{enumerate}
\item Let $(s,p) \in R_{\Lambda,i}$.
Then we have
\[
\dfrac{\displaystyle\prod_{p'=1}^{i+1} L_{+,p'}^{(s)}}{\displaystyle\prod_{\substack{1 \leq p'\leq i\\ p' \neq p}} (\nu_{i,p}^{(s)} - \nu_{i,p'}^{(s)})} = \hbar'^2 (p - i -1)
\]
and
\begin{align*}
& \dfrac{\displaystyle\prod_{p'=1}^{i+1} L_{+,p'}^{(s')}}{\displaystyle\prod_{p'=1}^{i} (\nu_{i,p}^{(s)} - \nu_{i,p'}^{(s')})}
&= \begin{cases}
\ \hbar'(p - i -1) - (a_s - a_{s'}) \text{ if } l_{i+1}^{(s')} = l_i^{(s')}, \\
\\
\ \hbar'(p - i) - (a_s - a_{s'}) \text{ if } l_{i+1}^{(s')} = l_i^{(s')}+1 \text{ and } l_i^{(s')} = i, \\
\\
\ \dfrac{\Big( \hbar'(p - l_{i}^{(s')}) - (a_s - a_{s'}) \Big) \Big( \hbar'(p - i -1) - (a_s - a_{s'}) \Big)}{\hbar'(p - l_{i}^{(s')} - 1) - (a_s - a_{s'})} \\
\\
\quad \text{ if } l_{i+1}^{(s')} = l_i^{(s')}+1 \text{ and } l_i^{(s')} \neq i
\end{cases}
\end{align*}
for $s' \neq s$.
\item Let $(s,p) \in A_{\Lambda,i}$.
Then we have
\[
\dfrac{\displaystyle\prod_{p'=1}^{i-1} L_{-,p'}^{(s)}}{\displaystyle\prod_{\substack{1 \leq p'\leq i\\ p' \neq p}} (\nu_{i,p}^{(s)} - \nu_{i,p'}^{(s)})} = -\dfrac{1}{p - i -1}
\]
and
\begin{align*}
& \dfrac{\displaystyle\prod_{p'=1}^{i-1} L_{-,p'}^{(s')}}{\displaystyle\prod_{p'=1}^{i} (\nu_{i,p}^{(s)} - \nu_{i,p'}^{(s')})} 
&= \begin{cases}
\ \dfrac{1}{\hbar'(p - i -1) - (a_s - a_{s'})} \text{ if } l_{i-1}^{(s')} = l_i^{(s')}, \\
\\
\ \dfrac{1}{\hbar'(p - i) - (a_s - a_{s'})} \text{ if } l_{i-1}^{(s')} = l_i^{(s')}-1 \text{ and } l_i^{(s')} = i, \\
\\
\ \dfrac{\hbar'(p - l_{i}^{(s')} - 1) - (a_s - a_{s'})}{\Big( \hbar'(p - l_{i}^{(s')}) - (a_s - a_{s'}) \Big) \Big( \hbar'(p - i -1) - (a_s - a_{s'}) \Big)} \\
\\
\quad \text{ if } l_{i-1}^{(s')} = l_i^{(s')}-1 \text{ and } l_i^{(s')} \neq i
\end{cases}
\end{align*}
for $s' \neq s$.
\end{enumerate}
\end{lem}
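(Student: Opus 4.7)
The plan is a direct computational verification, exploiting the very restricted form of Gelfand-Zetlin schemes in our setting: each entry $\lambda_{i,p}^{(s)}$ is $0$ or $1$, and is determined by the cutoff $l_i^{(s)}$, namely $\lambda_{i,p}^{(s)}=1$ iff $p\le l_i^{(s)}$. The interlacing $\lambda_{i,p}^{(s)} \ge \lambda_{i-1,p}^{(s)} \ge \lambda_{i,p+1}^{(s)}$ forces $l_{i-1}^{(s)} \in \{l_i^{(s)}-1,l_i^{(s)}\}$ and $l_{i+1}^{(s)} \in \{l_i^{(s)}, l_i^{(s)}+1\}$. Analyzing when $\Lambda \pm \delta_{i,p}^{(s)} \in \mathcal{S}_{\bf m}$ shows that $(s,p)\in R_{\Lambda,i}$ forces $l_{i+1}^{(s)} = l_i^{(s)}+1 = p$ and $l_{i-1}^{(s)} = l_i^{(s)}$, while $(s,p)\in A_{\Lambda,i}$ forces $l_i^{(s)}=p$, $l_{i-1}^{(s)}=p-1$, $l_{i+1}^{(s)}=p$. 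These observations pin down the relevant $\lambda$-values and will drive the case split.

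First I would treat the diagonal case $s'=s$. Substituting $\lambda_{i,p'}^{(s)}$ (which takes the known values dictated above) into $\nu_{i,p}^{(s)}-\nu_{i,p'}^{(s)}=\hbar'(p-p'+\lambda_{i,p'}^{(s)})$ and into $L_{+,p'}^{(s)}=\hbar'(p-p'+\lambda_{i+1,p'}^{(s)})$ (resp.\ $L_{-,p'}^{(s)}$), both numerator and denominator become products of signed integers times powers of $\hbar'$. Splitting each product at the relevant cutoff and counting gives the numerator $\hbar'^{\,i+1} p!\,(-1)^{i+1-p}(i+1-p)!$ and denominator $\hbar'^{\,i-1} p!\,(-1)^{i-p}(i-p)!$ in case (i), whose ratio simplifies to $\hbar'^2(p-i-1)$. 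The analogous count in case (ii) yields $-1/(p-i-1)$.

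For $s'\ne s$, both $L_{\pm,p'}^{(s')}$ and $\nu_{i,p}^{(s)}-\nu_{i,p'}^{(s')}$ have the shape $\hbar'(p-p'+\epsilon)-(a_s-a_{s'})$, where $\epsilon\in\{0,1\}$ for case (i) and $\epsilon\in\{-1,0\}$ for case (ii), switching values at the cutoffs $l_{i\pm 1}^{(s')}$ (numerator) and $l_i^{(s')}$ (denominator). Writing $[k]:=\hbar' k-(a_s-a_{s'})$, one lists out the multisets of factors in numerator and denominator and cancels the common terms. In case (i) the three subcases in the statement correspond respectively to: no offset between the cutoffs (everything cancels except the tail $[p-i-1]$); the cutoffs differ by one but $l_i^{(s')}=i$ so the second product in the denominator is empty (leaving $[p-i]$); and the cutoffs differ by one with $l_i^{(s')}<i$, producing a three-term fraction $[p-l_i^{(s')}][p-i-1]/[p-l_i^{(s')}-1]$. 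Case (ii) is parallel with numerator and denominator roles interchanged. The only delicate step is bookkeeping the endpoints of each index range and the skipped factor $[p-l_i^{(s')}-1]$ or $[p-l_i^{(s')}]$ in the mixed subcase, but once the multisets are written down explicitly the cancellation is mechanical.
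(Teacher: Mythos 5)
Your proposal is correct and takes essentially the same route as the paper: it exploits that the entries $\lambda_{i,p}^{(s)}$ are $0$/$1$ and determined by the cutoffs $l_i^{(s)}$, computes the diagonal $s'=s$ factor by splitting the products at the cutoff and telescoping, and handles $s'\neq s$ by the same three-way case split on $l_{i\pm1}^{(s')}$ versus $l_i^{(s')}$ with explicit cancellation of common factors. The only difference is presentational: you state the forced values of $l_{i\pm1}^{(s)}$ for $(s,p)\in R_{\Lambda,i}$ or $A_{\Lambda,i}$ explicitly, which the paper leaves implicit in its index ranges.
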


\begin{proof}
Let us prove (i).
We have $\lambda_{i,p}^{(s)} = 0$ by the assumption $(s,p) \in R_{\Lambda,i}$.
The ($s$)-factor is calculated as
\begin{align*}
\dfrac{\displaystyle\prod_{p'=1}^{i+1} L_{+,p'}^{(s)}}{\displaystyle\prod_{\substack{1 \leq p'\leq i\\ p' \neq p}} (\nu_{i,p}^{(s)} - \nu_{i,p'}^{(s)})} &= \dfrac{\displaystyle\prod_{p'=1}^{i+1} \hbar'( p - p' +  \lambda_{i+1, p'}^{(s)})}{\displaystyle\prod_{\substack{1 \leq p'\leq i\\ p' \neq p}} \hbar'( p - p' + \lambda_{i, p'}^{(s)})}\\
&= \dfrac{\displaystyle\prod_{p'=1}^{p} \hbar'(p - p' + 1) \displaystyle\prod_{p'=p+1}^{i+1} \hbar'(p - p')}{\displaystyle\prod_{p'=1}^{p-1} \hbar'(p - p' + 1) \displaystyle\prod_{p'=p+1}^{i} \hbar'(p - p')} \\
&= \hbar'^2 (p - i -1).
\end{align*}

Suppose  $s' \neq s$ and let us calculate
\begin{align*}
\dfrac{\displaystyle\prod_{p'=1}^{i+1} L_{+,p'}^{(s')}}{\displaystyle\prod_{p'=1}^{i} ( \nu_{i,p}^{(s)} - \nu_{i,p'}^{(s')}) } &= \dfrac{\displaystyle\prod_{p'=1}^{i+1} \Big( \hbar'(p - p' + \lambda_{i+1,p'}^{(s')}) - (a_s - a_{s'}) \Big)}{\displaystyle\prod_{p'=1}^{i} \Big( \hbar'(p - p' + \lambda_{i,p'}^{(s')}) - (a_s - a_{s'}) \Big)}.
\end{align*}
\begin{itemize}
\item If $l_{i+1}^{(s')} = l_i^{(s')} $ then we have $\lambda_{i+1,p'}^{(s')} = \lambda_{i,p'}^{(s')}$ for $p' = 1, \ldots, i$ and $\lambda_{i+1,i+1}^{(s')} = 0$.
Hence all but the ($p' = i+1$)-factor cancel and we obtain the result.
\item If $l_{i+1}^{(s')} = l_i^{(s')} + 1$ and $l_{i}^{(s')} = i$ then we have $\lambda_{i+1,p'}^{(s')} = \lambda_{i,p'}^{(s')}$ for $p' = 1, \ldots, i$ and $\lambda_{i+1,i+1}^{(s')} = 1$.
Hence all but the ($p' = i+1$)-factor cancel and we obtain the result.
\item If $l_{i+1}^{(s')} = l_i^{(s')} + 1$ and $l_{i}^{(s')} \neq i$ then we have $\lambda_{i+1,p'}^{(s')} = \lambda_{i,p'}^{(s')}$ except for $p' = l_i^{(s')} + 1, i+1$.
The result follows from $\lambda_{i+1,l_i^{(s')} + 1}^{(s')} = 1, \lambda_{i,l_i^{(s')} + 1}^{(s')} = 0$ and $\lambda_{i+1,i+1}^{(s')} = 0$.
\end{itemize}

Let us prove (ii).
We have $\lambda_{i,p}^{(s)} = 1$ by the assumption $(s,p) \in A_{\Lambda,i}$.
The ($s$)-factor is calculated as
\begin{align*}
\dfrac{\displaystyle\prod_{p'=1}^{i-1} L_{-,p'}^{(s)}}{\displaystyle\prod_{\substack{1 \leq p'\leq i\\ p' \neq p}} (\nu_{i,p}^{(s)} - \nu_{i,p'}^{(s)})} &= \dfrac{\displaystyle\prod_{p'=1}^{i-1} \hbar'( p - p' - 1 +  \lambda_{i-1, p'}^{(s)})}{\displaystyle\prod_{\substack{1 \leq p'\leq i\\ p' \neq p}} \hbar'( p - p' - 1 + \lambda_{i, p'}^{(s)})}\\
&= \dfrac{\displaystyle\prod_{p' = 1}^{p-1} \hbar'(p - p') \displaystyle\prod_{p' = p}^{i-1} \hbar'(p - p' - 1)}{\displaystyle\prod_{p' = 1}^{p-1} \hbar'(p - p') \displaystyle\prod_{p' = p+1}^i \hbar'(p - p' - 1) }.
\end{align*}
This is equal to
\[
\dfrac{-\hbar'}{\hbar'(p-i-1)} = - \dfrac{1}{p - i - 1}
\]
if $p \neq i$
and $1$ if $p = i$ for which the above is also true.

Suppose $s' \neq s$ and let us calculate
\begin{align*}
\dfrac{\displaystyle\prod_{p'=1}^{i-1} L_{-,p'}^{(s')}}{\displaystyle\prod_{p'=1}^{i} (\nu_{i,p}^{(s)} - \nu_{i,p'}^{(s')})} &= \dfrac{\displaystyle\prod_{p'=1}^{i-1} \Big( \hbar'(p - p' -1 + \lambda_{i-1,p'}^{(s')}) - (a_s - a_{s'}) \Big) }{\displaystyle\prod_{p'=1}^{i} \Big( \hbar'(p - p' -1 + \lambda_{i,p'}^{(s')}) - (a_s - a_{s'}) \Big) }.
\end{align*}
\begin{itemize}
\item If $l_{i-1}^{(s')} = l_i^{(s')}$ then we have $\lambda_{i-1,p'}^{(s')} = \lambda_{i,p'}^{(s')}$ for $p' = 1, \ldots, i-1$ and $\lambda_{i,i}^{(s')} = 0$.
Hence all but the ($p' = i$)-factor cancel and we obtain the result.
\item If $l_{i-1}^{(s')} = l_i^{(s')} - 1$ and $l_{i}^{(s')} = i$ then we have $\lambda_{i-1,p'}^{(s')} = \lambda_{i,p'}^{(s')}$ for $p' = 1, \ldots, i-1$ and $\lambda_{i,i}^{(s')} = 1$.
Hence all but the ($p' = i$)-factor cancel and we obtain the result.
\item If $l_{i-1}^{(s')} = l_i^{(s')} - 1$ and $l_{i}^{(s')} \neq i$ then we have $\lambda_{i-1,p'}^{(s')} = \lambda_{i,p'}^{(s')}$ except for $p' = l_i^{(s')}, i$.
The result follows from $\lambda_{i,l_i^{(s')}}^{(s')} = 1, \lambda_{i-1,l_i^{(s')}}^{(s')} = 0$ and $\lambda_{i,i}^{(s')} = 0$.
\end{itemize}
\end{proof}

\begin{lem}\label{lem:product}
Let $(x,y) \in R_{\lambda,i}$ and $\mu = \lambda \setminus (x,y)$.
Then we have
\begin{align*}
&\tilde{E}_{\lambda \mu} \tilde{F}_{\mu \lambda}\\
&= \displaystyle\prod_{(x',y') \in A_{\lambda,i}} \dfrac{\varepsilon_1 (x - x' + 1) + \varepsilon_2 (y - y' + 1)}{\varepsilon_1 (x - x') + \varepsilon_2 (y - y')} \displaystyle\prod_{(x',y') \in R_{\mu,i}} \dfrac{\varepsilon_1 (x - x' - 1) + \varepsilon_2 (y - y' - 1)}{\varepsilon_1 (x - x') + \varepsilon_2 (y - y')}.
\end{align*}
\end{lem}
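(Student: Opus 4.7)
The plan is to multiply the explicit expressions for $\tilde{E}_{\lambda \mu}$ and $\tilde{F}_{\mu \lambda}$ given just above the statement, and then simplify systematically using Lemma~\ref{lem:factor}. Let $M = \Lambda + \delta_{i,p}^{(s)}$ denote the scheme associated with $\mu$, so that $(s,p) \in R_{\Lambda,i}$ appears in $\tilde{E}_{\lambda\mu}$ and $(s,p) \in A_{M,i}$ appears in $\tilde{F}_{\mu\lambda}$. Since $M$ differs from $\Lambda$ only at the entry $(i,p,s)$, all $K_\pm^{(s')}, L_\pm^{(s')}$ and all $\nu_{i,p'}^{(s')}$ with $(s',p') \neq (s,p)$ are unchanged between $\Lambda$ and $M$. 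Using the rewritten forms of $\gamma_{i,p}^{(s)}$ and $\beta_{i,p}^{(s)}$ recorded just before Lemma~\ref{lem:factor}, the ``extra'' $K_\pm/L_\pm$ ratios inside $\gamma$ cancel block by block against the $L_\pm/K_\pm$ ratios inside $\beta$, leaving
\[
\gamma_{i,p}^{(s)}\,\beta_{i,p}^{(s)} = \prod_{s'=1}^{l}\prod_{p'=1}^{i+1} L_{+,p'}^{(s')} \cdot \prod_{s'=1}^{l}\prod_{p'=1}^{i-1} L_{-,p'}^{(s')},
\]
where $\gamma$ is evaluated on $\Lambda$ and $\beta$ on $M$.

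I would then pair the first product with the $\Lambda$-version of $\prod_{(s',p') \neq (s,p)}(\nu_{i,p}^{(s)} - \nu_{i,p'}^{(s')})^{-1}$ via Lemma~\ref{lem:factor}(i), and the second with the $M$-version via Lemma~\ref{lem:factor}(ii). The diagonal $s' = s$ contributions are $\hbar'^2(p-i-1)$ from (i) and $-(p-i-1)^{-1}$ from (ii); their product is $-\hbar'^2 = -\hbar^2$, which exactly cancels the prefactor $-\hbar^{-2}$ in $\tilde{E}_{\lambda\mu}\tilde{F}_{\mu\lambda}$. This yields
\[
\tilde{E}_{\lambda \mu}\,\tilde{F}_{\mu \lambda} = \prod_{s' \neq s} \Phi^{(s')},
\]
where $\Phi^{(s')}$ denotes the product of the block-$s'$ factors supplied by Lemma~\ref{lem:factor}(i) and~(ii).

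It remains to identify $\prod_{s' \neq s} \Phi^{(s')}$ with the cell factors on the right-hand side. For fixed $s' \neq s$, of the nine combinations of cases in parts~(i) and~(ii), only six are realizable: those demanding $l_{i-1}^{(s')} = i$ are vacuous since $l_{i-1}^{(s')} \leq i - 1$, and the conditions ``$l_i^{(s')} = i$'' and ``$l_i^{(s')} \neq i$'' conflict in the rest. Among the realizable cases $(1,1), (1,2), (1,3), (2,2), (3,1), (3,3)$, the diagonal combinations $(1,1), (2,2), (3,3)$ each give $\Phi^{(s')} = 1$, and by the bijections~(\ref{eq:SetR}) and~(\ref{eq:SetA}) block $s'$ contributes no cell to $A_{\lambda,i} \cup R_{\mu,i}$ in these cases. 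Cases $(1,2)$ and $(1,3)$ correspond to an addable $i$-cell in block $s'$ at $p^A = l_i^{(s')}$, and case $(3,1)$ to a removable $i$-cell at $p^R = l_i^{(s')} + 1$. For each, using~(\ref{eq:(x,y)}) to rewrite $y - y' = (x - x') + N(r_{s'} - r_s)$, together with $t = N\varepsilon_2$, $\hbar' = -(\varepsilon_1+\varepsilon_2)$, and $a_s - a_{s'} = t(r_s - r_{s'}) + \hbar'(p_1+\cdots+p_s - p_1 - \cdots - p_{s'})$, one derives the identity $\varepsilon_1(x - x' + \eta) + \varepsilon_2(y - y' + \eta) = \hbar'(p - p' - \eta) - (a_s - a_{s'})$ for any $\eta$; taking $\eta \in \{-1, 0, 1\}$, the factor $\Phi^{(s')}$ coincides with the product of cell factors contributed by block $s'$ on the right-hand side. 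Finally, block $s$ itself contributes no cell: an addable in row $i$ of block $s$ would force $l_{i+1}^{(s)} = l_i^{(s)}$, contradicting $(s,p) \in R_{\Lambda,i}$ (which requires $l_{i+1}^{(s)} = l_i^{(s)}+1$); and $M$ satisfies $l_{i+1}^{(s),M} = l_i^{(s),M} = p$, precluding any block-$s$ removable of $\mu$.

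The main obstacle is the case-by-case identification in the last step: bookkeeping which subcases are compatible, matching each to the right partition cells through the GZ/partition dictionary of Subsection~\ref{subsection:preliminaries}, and correctly translating the scheme-coordinate factor $\hbar'(p - p' - \eta) - (a_s - a_{s'})$ into the partition-coordinate factor $\varepsilon_1(x - x' + \eta) + \varepsilon_2(y - y' + \eta)$ with consistent shifts.
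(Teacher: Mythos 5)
Your proposal is correct and follows essentially the same route as the paper's proof: the same decomposition of $\tilde{E}_{\lambda\mu}\tilde{F}_{\mu\lambda}$ into the $(s)$-factor (which cancels $-\hbar^{-2}$) and the $(s'\neq s)$-factors, the same six-case analysis via Lemma~\ref{lem:factor} (your six realizable pairs are exactly the paper's cases, with the three trivial diagonal combinations and the three contributing the addable/removable cell factors), and the same coordinate translation $\hbar'(p-l_i^{(s')})-(a_s-a_{s'})=\varepsilon_1(x-x')+\varepsilon_2(y-y')$. You merely make explicit a few points the paper leaves implicit, such as the $\gamma\beta$ ratio cancellation and the fact that block $s$ contributes no cell to $A_{\lambda,i}$ or $R_{\mu,i}$.
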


\begin{proof}
The product $\tilde{E}_{\lambda \mu} \tilde{F}_{\mu \lambda}$ is written as
\[
 (-\hbar^{-2}) \times (\text{($s$)-factor}) \times \prod_{s' \neq s} (\text{($s'$)-factors})
.\]
We see that
\[
(-\hbar^{-2}) \times (\text{($s$)-factor}) = (-\hbar^{-2}) \times \left( \hbar^2(p-i-1) \dfrac{-1}{p-i-1} \right) = 1
\]
by Lemma~\ref{lem:factor}.
We classify ($s' \neq s$)-factors as follows;
\begin{enumerate}
\item $l_{i+1}^{(s')} = l_{i}^{(s')} = l_{i-1}^{(s')}$;
\item $l_{i+1}^{(s')} = l_{i}^{(s')}$, $l_{i-1}^{(s')} = l_{i}^{(s')} - 1$ and $l_{i}^{(s')} = i$;
\item $l_{i+1}^{(s')} = l_{i}^{(s')}$, $l_{i-1}^{(s')} = l_{i}^{(s')} - 1$ and $l_{i}^{(s')} \neq i$;
\item $l_{i+1}^{(s')} = l_{i}^{(s')} + 1$, $l_{i-1}^{(s')} = l_{i}^{(s')} - 1$ and $l_{i}^{(s')} = i$;
\item $l_{i+1}^{(s')} = l_{i}^{(s')} + 1$, $l_{i-1}^{(s')} = l_{i}^{(s')}$ and $l_{i}^{(s')} \neq i$;
\item $l_{i+1}^{(s')} = l_{i}^{(s')} + 1$, $l_{i-1}^{(s')} = l_{i}^{(s')} - 1$ and $l_{i}^{(s')} \neq i$.
\end{enumerate}
We calculate the factors in each case using Lemma~\ref{lem:factor}.
In (i), we have
\[
\Big( \hbar'(p - i - 1) - (a_s - a_{s'}) \Big) \times \dfrac{1}{\hbar'(p - i - 1) - (a_s - a_{s'})} = 1.
\]
In (ii), we have
\begin{align*}
& \Big( \hbar'(p - i - 1) - (a_s - a_{s'}) \Big) \times \dfrac{1}{\hbar'(p - i) - (a_s - a_{s'})} = \dfrac{\hbar'(p - l_{i}^{(s')} - 1) - (a_s - a_{s'})}{\hbar'(p - l_{i}^{(s')}) - (a_s - a_{s'})}.
\end{align*}
In (iii), we have
\begin{align*}
& \Big( \hbar'(p - i - 1) - (a_s - a_{s'}) \Big) \times \dfrac{\hbar'(p - l_{i}^{(s')} - 1) - (a_s - a_{s'})}{\Big( \hbar'(p - l_{i}^{(s')}) - (a_s - a_{s'}) \Big) \Big( \hbar'(p - i -1) - (a_s - a_{s'}) \Big)} \\
& = \dfrac{\hbar'(p - l_{i}^{(s')} - 1) - (a_s - a_{s'})}{\hbar'(p - l_{i}^{(s')}) - (a_s - a_{s'})}.
\end{align*}
In (iv), we have
\[
\Big( \hbar'(p - i) - (a_s - a_{s'}) \Big) \times \dfrac{1}{\hbar'(p - i) - (a_s - a_{s'})} = 1.
\]
In (v), we have
\begin{align*}
& \dfrac{\Big( \hbar'(p - l_{i}^{(s')}) - (a_s - a_{s'}) \Big) \Big( \hbar'(p - i -1) - (a_s - a_{s'}) \Big)}{\hbar'(p - l_{i}^{(s')} - 1) - (a_s - a_{s'})} \times \dfrac{1}{\hbar'(p - i - 1) - (a_s - a_{s'})} \\
&= \dfrac{\hbar'(p - l_{i}^{(s')}) - (a_s - a_{s'})}{\hbar'(p - l_{i}^{(s')} - 1) - (a_s - a_{s'})}.
\end{align*}
In (vi), we have
\begin{align*}
& \dfrac{\Big( \hbar'(p - l_{i}^{(s')}) - (a_s - a_{s'}) \Big) \Big( \hbar'(p - i -1) - (a_s - a_{s'}) \Big)}{\hbar'(p - l_{i}^{(s')} - 1) - (a_s - a_{s'})} \\
& \times \dfrac{\hbar'(p - l_{i}^{(s')} - 1) - (a_s - a_{s'})}{\Big( \hbar'(p - l_{i}^{(s')}) - (a_s - a_{s'}) \Big) \Big( \hbar'(p - i -1) - (a_s - a_{s'}) \Big)} \\
& = 1.
\end{align*}
Recall that $(s,p) \in R_{\Lambda,i}$ corresponds to $(x,y) \in R_{\lambda,i}$ with
\[
x = a - 1 = p_1 + \cdots + p_s - p,\ y = \lambda_{x+1} - 1.
\]
In (ii) and (iii), put
\[
x' = a' - 1 = p_1 + \cdots + p_{s'} - l_{i}^{(s')}.
\]
Then $(x', y' = \lambda_{x'+1}) \in A_{\lambda,i}$ and
\begin{align*}
\hbar'(p - l_{i}^{(s')}) - (a_s - a_{s'}) &= (\hbar'(p - 1) - a_{s}) - (\hbar'(l_{i}^ {(s')} - 1) - a_{s'}) \\
&= - (t m_a + \hbar'(a - \dfrac{1}{2})) + (t m_{a'} + \hbar'(a' - \dfrac{1}{2})) \\
&= \hbar(x - x') - t (m_a - m_{a'}) \\
&= \varepsilon_1 (x - x') + \varepsilon_2 (y - y').
\end{align*}
Hence we have
\begin{align*}
\prod_{s' \text{satisfies (ii) or (iii)}}\dfrac{\hbar'(p - l_{i}^{(s')} - 1) - (a_s - a_{s'})}{\hbar'(p - l_{i}^{(s')}) - (a_s - a_{s'})} = \prod_{(x',y') \in A_{\lambda,i}} \dfrac{ \varepsilon_1 (x - x' + 1) + \varepsilon_2 (y - y' + 1)}{ \varepsilon_1 (x - x') + \varepsilon_2 (y - y')}.
\end{align*}
In (v), put
\[
x' = a' - 1 = p_1 + \cdots + p_{s'} - (l_{i}^{(s')} + 1).
\]
Then $(x', y' = \lambda_{x'+1} - 1) \in R_{\mu,i}$ and
\[
\hbar'(p - l_{i}^{(s')}) - (a_s - a_{s'}) = \varepsilon_1 (x - x' - 1) + \varepsilon_2 (y - y' - 1).
\]
Hence we have
\[
\prod_{s' \text{satisfies (v)}}\dfrac{\hbar'(p - l_{i}^{(s')}) - (a_s - a_{s'})}{\hbar'(p - l_{i}^{(s')} - 1) - (a_s - a_{s'})} = \prod_{(x',y') \in R_{\mu,i}} \dfrac{ \varepsilon_1 (x - x' - 1) + \varepsilon_2 (y - y' - 1)}{ \varepsilon_1 (x - x') + \varepsilon_2 (y - y')}.
\]
This completes the proof.
\end{proof}

By Lemma~\ref{lem:ratio} (iii) and Lemma~\ref{lem:product} we have
\begin{align*}
& \left( E_{\lambda \mu}^{(r)} \right)^2 = \tilde{E}_{\lambda \mu}^{(r)} \tilde{F}_{\mu \lambda}^{(r)} \dfrac{\langle P_{\lambda}, P_{\lambda} \rangle_{F}}{\langle P_{\mu}, P_{\mu} \rangle_{F}} \\
&= \left( \varepsilon_1 (x + \dfrac{1}{2}i) + \varepsilon_2 (y - \dfrac{1}{2}i) \right)^{2r} \\
& \left( \displaystyle\prod_{(x',y') \in A_{\lambda,i, (x,y)}^{r}} \dfrac{\varepsilon_1 (x - x' + 1) + \varepsilon_2 (y - y' + 1)}{\varepsilon_1 (x - x') + \varepsilon_2 (y - y')} \displaystyle\prod_{(x',y') \in R_{\mu,i,(x,y)}^{r}} \dfrac{\varepsilon_1 (x - x' - 1) + \varepsilon_2 (y - y' - 1)}{\varepsilon_1 (x - x') + \varepsilon_2 (y - y')} \right)^2
\end{align*}
and hence
\begin{align*}
& E_{\lambda \mu}^{(r)} = c_{\lambda \mu}^{(r)} \left( \varepsilon_1 (x + \dfrac{1}{2}i) + \varepsilon_2 (y - \dfrac{1}{2}i) \right)^{r} \\
& \displaystyle\prod_{(x',y') \in A_{\lambda,i, (x,y)}^{r}} \dfrac{\varepsilon_1 (x - x' + 1) + \varepsilon_2 (y - y' + 1)}{\varepsilon_1 (x - x') + \varepsilon_2 (y - y')} \displaystyle\prod_{(x',y') \in R_{\mu,i,(x,y)}^{r}} \dfrac{\varepsilon_1 (x - x' - 1) + \varepsilon_2 (y - y' - 1)}{\varepsilon_1 (x - x') + \varepsilon_2 (y - y')}
\end{align*}
with some $c_{\lambda \mu}^{(r)} \in \{\pm 1\}$.
We have
\begin{align*}
& F_{\mu \lambda}^{(r)} = E_{\lambda \mu}^{(r)} \dfrac{\langle P_{\mu}, P_{\mu} \rangle_{F}}{\langle P_{\lambda}, P_{\lambda} \rangle_{F}} \\
&= c_{\lambda \mu}^{(r)} \left( \varepsilon_1 (x + \dfrac{1}{2}i) + \varepsilon_2 (y - \dfrac{1}{2}i) \right)^{r} \\
& \displaystyle\prod_{(x',y') \in A_{\lambda,i, (x,y)}^{l}} \dfrac{\varepsilon_1 (x - x' + 1) + \varepsilon_2 (y - y' + 1)}{\varepsilon_1 (x - x') + \varepsilon_2 (y - y')} \displaystyle\prod_{(x',y') \in R_{\mu,i,(x,y)}^{l}} \dfrac{\varepsilon_1 (x - x' - 1) + \varepsilon_2 (y - y' - 1)}{\varepsilon_1 (x - x') + \varepsilon_2 (y - y')}.
\end{align*}
Let us determine the signs $c_{\lambda \mu}^{(r)}$.
By the definition we have
\begin{align*}
E_{\lambda \mu}^{(r)} &= \dfrac{\alpha_{\lambda}}{\alpha_{\mu}} \tilde{E}_{\lambda \mu}^{(r)}\\
&= \dfrac{\alpha_{\lambda}}{\alpha_{\mu}} \left( \varepsilon_1 (x + \dfrac{1}{2}i) + \varepsilon_2 (y - \dfrac{1}{2}i) \right)^{r} \tilde{E}_{\lambda \mu}.
\end{align*}
This implies that $c_{\lambda \mu}^{(r)}$ is independent of $r$ since
\[
\varepsilon_1 (x + \dfrac{1}{2}i) + \varepsilon_2 (y - \dfrac{1}{2}i) \neq 0.
\]
Thus it is enough to determine $c_{\lambda \mu}^{(0)}$.
The value of $E_{\lambda \mu}^{(0)}$ has no pole at $\hbar = 0$, equivalently $\varepsilon_2 = - \varepsilon_1$, and specializes to $c_{\lambda \mu}^{(0)}$ at $\hbar = 0$.
When $\hbar = 0$, the Jack($\mathfrak{gl}_N$) symmetric function $P_{\lambda}$ specializes to the Schur symmetric function $s_{\lambda}$.
Then by the formula of $X_i^{+} s_{\lambda}$ we conclude $c_{\lambda \mu}^{(0)} = 1$.
We have obtained the following.

\begin{thm}\label{thm:main1}
The action of the Yangian $Y_{\hbar}(\mathfrak{sl}_N)$ on the Fock space $F$ is given by
\begin{align*}
& X_{i,r}^+ P_{\lambda} \\
&= \sum_{(x,y) \in R_{\lambda,i}} \left( \varepsilon_1 (x + \dfrac{1}{2}i) + \varepsilon_2 (y - \dfrac{1}{2}i) \right)^{r} \\
& \displaystyle\prod_{(x',y') \in A_{\lambda,i, (x,y)}^{r}} \dfrac{\varepsilon_1 (x - x' + 1) + \varepsilon_2 (y - y' + 1)}{\varepsilon_1 (x - x') + \varepsilon_2 (y - y')} \displaystyle\prod_{(x',y') \in R_{\lambda \setminus (x,y),i,(x,y)}^{r}} \dfrac{\varepsilon_1 (x - x' - 1) + \varepsilon_2 (y - y' - 1)}{\varepsilon_1 (x - x') + \varepsilon_2 (y - y')} P_{\lambda \setminus (x,y)},
\end{align*}
\begin{align*}
& X_{i,r}^- P_{\mu} \\
&= \sum_{(x,y) \in A_{\mu,i}} \left( \varepsilon_1 (x + \dfrac{1}{2}i) + \varepsilon_2 (y - \dfrac{1}{2}i) \right)^{r} \\
& \displaystyle\prod_{(x',y') \in A_{\mu \cup (x,y),i, (x,y)}^{l}} \dfrac{\varepsilon_1 (x - x' + 1) + \varepsilon_2 (y - y' + 1)}{\varepsilon_1 (x - x') + \varepsilon_2 (y - y')} \displaystyle\prod_{(x',y') \in R_{\mu,i,(x,y)}^{l}} \dfrac{\varepsilon_1 (x - x' - 1) + \varepsilon_2 (y - y' - 1)}{\varepsilon_1 (x - x') + \varepsilon_2 (y - y')} P_{\mu \cup (x,y)},
\end{align*}
\begin{align*}
& H_i(u) P_{\lambda}\\
&= \prod_{(x,y) \in A_{\lambda,i}}\dfrac{u - \varepsilon_1(x + \dfrac{1}{2}i - 1) - \varepsilon_2(y - \dfrac{1}{2}i - 1)}{u - \varepsilon_1(x + \dfrac{1}{2}i) - \varepsilon_2(y - \dfrac{1}{2}i)} \prod_{(x,y) \in R_{\lambda,i}}\dfrac{u - \varepsilon_1(x + \dfrac{1}{2}i + 1) - \varepsilon_2(y - \dfrac{1}{2}i + 1)}{u - \varepsilon_1(x + \dfrac{1}{2}i) - \varepsilon_2(y - \dfrac{1}{2}i)} P_{\lambda}.
\end{align*}
\end{thm}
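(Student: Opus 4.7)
The plan is to leverage the identification of the Jack($\mathfrak{gl}_N$) basis $\{P_\lambda\}$ with the Nazarov--Tarasov Gelfand--Zetlin basis $\{\xi_\lambda\}$, for which formulas (\ref{eq:A_i(u)})--(\ref{eq:C_i(u)}) give explicit actions of $A_i(u)$, $B_i(u)$, $C_i(u)$. First I would establish $P_\lambda = \alpha_\lambda \xi_\lambda$ for some nonzero scalar $\alpha_\lambda \in \mathbb{C}(\varepsilon_1,\varepsilon_2)$: both are joint eigenbases of the commutative Gelfand--Zetlin subalgebra $A_{\hbar'}(\mathfrak{gl}_N)$ whose simultaneous spectrum on $F_{\bf m}$ is simple, and their $A_i(u)$-eigenvalues are seen to agree by changing indexing via $a = p_1 + \cdots + p_s - p + 1$ and using (\ref{eq:3.3.1}), matching Uglov's (\ref{eq:eigenvalue}) with Nazarov--Tarasov's (\ref{eq:A_i(u)}). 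Substituting the $A_i(u)$-eigenvalue into the definition (\ref{eq:H_{i,r}}) of $H_i(u)$ and using the bijections (\ref{eq:SetR}), (\ref{eq:SetA}) together with (\ref{eq:3.3.2}), the cross terms cancel and one is left precisely with the claimed product over $A_{\lambda,i} \cup R_{\lambda,i}$.

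For $X_{i,r}^\pm$ the scalar $\alpha_\lambda$ is not known a priori, so I would work with the matrix coefficients $E_{\lambda\mu}^{(r)}, F_{\mu\lambda}^{(r)}$ in the $\{P_\lambda\}$-basis and the corresponding $\tilde E_{\lambda\mu}^{(r)}, \tilde F_{\mu\lambda}^{(r)}$ in the $\{\xi_\lambda\}$-basis, whose explicit form comes from (\ref{eq:X_{i,r}^+})--(\ref{eq:X_{i,r}^-}) combined with (\ref{eq:A_i(u)})--(\ref{eq:C_i(u)}). The key observation is that the unknown scalars cancel in the product, giving $E_{\lambda\mu}^{(r)} F_{\mu\lambda}^{(r)} = \tilde E_{\lambda\mu}^{(r)} \tilde F_{\mu\lambda}^{(r)}$, while the adjointness relation in Proposition~\ref{prop:adjoint} yields $E_{\lambda\mu}^{(r)} \langle P_\mu, P_\mu\rangle_F = F_{\mu\lambda}^{(r)} \langle P_\lambda, P_\lambda\rangle_F$. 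Combining these two identities solves for $(E_{\lambda\mu}^{(r)})^2$ as the product $\tilde E_{\lambda\mu}^{(r)} \tilde F_{\mu\lambda}^{(r)} \cdot \langle P_\lambda,P_\lambda\rangle_F / \langle P_\mu,P_\mu\rangle_F$. The first factor is computed in Lemma~\ref{lem:product} and the norm ratio in Lemma~\ref{lem:ratio}(iii); the two expressions pair into a perfect square that, after extracting the $(\varepsilon_1(x + i/2) + \varepsilon_2(y - i/2))^r$ prefactor from (\ref{eq:X_{i,r}^+}), matches the square of the formula claimed in the theorem.

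To pin down the signs, write $E_{\lambda\mu}^{(r)} = c_{\lambda\mu}^{(r)} \cdot (\text{claimed expression})$ with $c_{\lambda\mu}^{(r)} \in \{\pm 1\}$. Since $E_{\lambda\mu}^{(r)} = (\alpha_\lambda/\alpha_\mu)(\varepsilon_1(x + i/2) + \varepsilon_2(y - i/2))^r \tilde E_{\lambda\mu}$ and the Euclidean factor is nonzero, the sign is independent of $r$, so it suffices to compute $c_{\lambda\mu}^{(0)}$. I would then specialize $\hbar = \varepsilon_1 + \varepsilon_2 \to 0$: in this limit $P_\lambda$ degenerates to the Schur function $s_\lambda$, and $X_{i,0}^+ = X_i^+$ acts on $F$ by the formula (\ref{eq:affine1}) with unit coefficient on each $s_{\lambda \setminus (x,y)}$, forcing $c_{\lambda\mu}^{(0)} = 1$ (one also checks the specialization has no pole, which follows from the explicit form of the claimed coefficient). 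Finally, the formula for $X_{i,r}^-$ is obtained from that of $X_{i,r}^+$ via adjointness, multiplying by the norm ratio from Lemma~\ref{lem:ratio}(iii).

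The main obstacle is the combinatorial bookkeeping in the computation of $\tilde E_{\lambda\mu}^{(r)} \tilde F_{\mu\lambda}^{(r)}$: the Nazarov--Tarasov factors $\gamma_{i,p}^{(s)}$ and $\beta_{i,p}^{(s)}$ must be translated from the Gelfand--Zetlin indexing $(s,i,p)$ into partition data $(x,y)$ and matched, case by case, with the six possible local configurations of $l_{i\pm 1}^{(s')}$ relative to $l_i^{(s')}$ (as laid out in Lemma~\ref{lem:factor} and used in Lemma~\ref{lem:product}); each case must contribute exactly the expected factor in $A_{\lambda,i}$ or $R_{\mu,i}$ indexed by the neighbor cells of $(x,y)$. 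A secondary subtlety is the interplay between the twist $\omega_{f(u;r)}$ in the definition of $F_{\bf m}$ and the shifts by $\tfrac{1}{2}\hbar(i\pm 1)$ appearing in (\ref{eq:X_{i,r}^+})--(\ref{eq:H_{i,r}}), which must conspire so that the $g_i(u)$-factors cancel out completely in the final $H_i(u)$ and $X^\pm$ formulas.
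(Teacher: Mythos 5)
Your proposal is correct and follows essentially the same route as the paper: identify $P_\lambda$ with the Gelfand--Zetlin eigenvector $\xi_\lambda$ up to scalar by matching $A_i(u)$-eigenvalues, read off $H_i(u)$ from the ratio of $A$'s with the $g_i$-factors and cross terms cancelling, recover $(E_{\lambda\mu}^{(r)})^2$ from the scalar-independent product $\tilde E_{\lambda\mu}^{(r)}\tilde F_{\mu\lambda}^{(r)}$ together with the adjointness relation and the norm ratio of Lemma~\ref{lem:ratio}(iii), and fix the sign by the $r$-independence of $c_{\lambda\mu}^{(r)}$ and the specialization $\hbar=0$ to the Schur-function action. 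You also correctly isolate the two genuine technical burdens (the case analysis of Lemma~\ref{lem:factor}/\ref{lem:product} and the cancellation of the $\omega_{f(u;r)}$-twist against the half-integer shifts), so nothing essential is missing.
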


\section{Quiver variety}\label{sec:quiver}

\subsection{Basics}
We consider the quiver varieties associated with the cyclic quiver $(I, \Omega)$, where the set of vertices is $I = \mathbb{Z} / N \mathbb{Z}$ and the set of arrows is $\Omega = \{\tau_i \colon i \to i+1 \mid i \in I\}$.
Let $\bar{\Omega} = \{\bar{\tau}_{i-1} \colon i \to i-1 \mid i \in I\}$ be the opposite arrows.

We recall the definition and basic properties of quiver varieties following \cite{MR1604167}.
For the special case of the cyclic quiver, see papers by Varagnolo-Vasserot~\cite{MR1722361} and Nagao~\cite{MR2583334,MR2517812}.
Take ${\bf v}=(v_i), {\bf w}=(w_i) \in (\mathbb{Z}_{\geq 0})^I$ and $I$-graded vector spaces $V$, $W$ such that $\dim V_i = v_i$, $\dim W_i = w_i$.
We set
\begin{align*}
M({\bf v}, {\bf w}) = &\bigoplus_{i \in I}\Hom(V_i, V_{i+1}) \oplus \bigoplus_{i \in I}\Hom(V_i, V_{i-1}) \\
& \oplus \bigoplus_{i \in I}\Hom(W_i, V_i) \oplus \bigoplus_{i \in I}\Hom(V_i, W_i)
\end{align*}
and write an element of $M({\bf v}, {\bf w})$ as $(B_{\Omega}=(B_i), B_{\bar{\Omega}}=(\bar{B}_i), a=(a_i), b=(b_i))$.
The moment map $\mu \colon M({\bf v}, {\bf w}) \to \bigoplus_{i \in I} \End(V_i)$ is defined by
\begin{align*}
\mu((B_{\Omega}, B_{\bar{\Omega}}, a, b)) = \sum_{i \in I} \left( B_{i-1}\bar{B}_{i} - \bar{B}_{i+1}B_{i} \right) + \sum_{i \in I} a_i b_i.
\end{align*}
An element $(B_{\Omega}, B_{\bar{\Omega}}, a, b) \in \mu^{-1}(0)$ is said to be stable if there does not exist any nonzero $(B_{\Omega}, B_{\bar{\Omega}})$-invariant subspace of $V$ contained in $\Ker b$.
Put $\mu^{-1}(0)^{s}$ to be the subset of $\mu^{-1}(0)$ consisting of stable elements.
Then $G_{\bf v} = \prod_{i \in I} GL(V_i)$ acts on $\mu^{-1}(0)^{s}$ freely.
The quiver variety is defined as
\[
\mathfrak{M}({\bf v}, {\bf w}) = \mu^{-1}(0)^{s} / G_{\bf v}
\]
and known to be a nonsingular quasi-projective variety.
In this paper, we only consider ${\bf w}$ corresponding to the basic weight $\Lambda_0$ of $\hat{\mathfrak{sl}}_N$, that is,
\[
w_i =
\begin{cases}
1 \text{ if } i=0,\\
0 \text{ otherwise.}
\end{cases}
\]
We denote $\mathfrak{M}({\bf v}, {\bf w})$ simply by $\mathfrak{M}({\bf v}) $ from now on.
Each ${\bf v}$ is identified with the element $\sum_{i \in I} v_i \alpha_i$ of the root lattice of $\hat{\mathfrak{sl}}_N$.

The quiver variety $\mathfrak{M}({\bf v})$ admits the action of the torus $T = (\mathbb{C}^{\times})^2$ defined by
\[
(t_1, t_2) [B_{\Omega}, B_{\bar{\Omega}}, a, b] = [t_1 B_{\Omega}, t_2 B_{\bar{\Omega}}, t_1 t_2 a, b]\]
for $(t_1, t_2) \in T$.
In the sequel, the natural representations of $T$ are denoted by $t_1, t_2$ which are defined as the first and the second projections.
We use the same symbols for the corresponding $T$-equivariant line bundles on $\mathfrak{M}({\bf v})$.

We define the tautological vector bundles $\mathcal{V}_i$, $\mathcal{W}_i$ by
\begin{align*}
\mathcal{V}_i &= \mu^{-1}(0)^{s} \times^{G_{\bf v}} V_i, \\
\mathcal{W}_i &= \mathcal{O}_{\mathfrak{M}({\bf v}, {\bf w})} \otimes W_i.
\end{align*}
We regard them as $T$-equivariant vector bundles.
A $T$-equivariant structure on $\mathcal{V}_i$ is naturally induced from the $T$-action on $\mathfrak{M}({\bf v})$ and $\mathcal{W}_i$ admits the trivial $T$-equivariant structure.
There exists a complex of $T$-equivariant vector bundles
\[
(t_1 t_2)^{-1} \mathcal{V}_i \to t_2^{-1} \mathcal{V}_{i+1} \oplus t_1^{-1} \mathcal{V}_{i-1} \oplus (t_1 t_2)^{-1} \mathcal{W}_i \to \mathcal{V}_i.
\]
We denote by $\mathcal{C}_i({\bf v})$ the corresponding class
\[
(t_1 t_2)^{-1} (t_1 \mathcal{V}_{i+1} + t_2 \mathcal{V}_{i-1} - (t_1 t_2 + 1)\mathcal{V}_i + \mathcal{W}_i)
\]
in the Grothendieck group.

For a fixed ${\bf v}$ and $i \in I$, put ${\bf v}^1 = {\bf v} - \alpha_i$ and ${\bf v}^2 = {\bf v}$.
We define a subset $\mathfrak{P}_i({\bf v})$ of $\mathfrak{M}({\bf v}^1) \times \mathfrak{M}({\bf v}^2)$ as follows.
An element of $\mathfrak{P}_i({\bf v})$ is a pair $([B_{\Omega}^1, B_{\bar{\Omega}}^1, a^1, b^1], [B_{\Omega}^2, B_{\bar{\Omega}}^2, a^2, b^2])$ such that
\begin{itemize}
\setlength{\itemsep}{-10pt}
\item $V^1$ is a $(B_{\Omega}^2, B_{\bar{\Omega}}^2)$-invariant subspace of $V^2$,\\
\item $\Ima a^2 \subseteq V^1$,\\
\item $(B_{\Omega}^2, B_{\bar{\Omega}}^2)|_{V^1} = (B_{\Omega}^1, B_{\bar{\Omega}}^1)$,\\
\item $b^2 |_{V^1} = b^1$.
\end{itemize}
Then $\mathfrak{P}_i({\bf v})$ is known to be nonsingular.
We define the line bundle $\mathcal{L}_i({\bf v})$ on $\mathfrak{P}_i({\bf v})$ to be the quotient $p_2^* \mathcal{V}_i^2 / p_1^* \mathcal{V}_i^1$, where $p_a \colon \mathfrak{M}({\bf v}^1) \times \mathfrak{M}({\bf v}^2) \to \mathfrak{M}({\bf v}^a)$ is the $a$-th projection for $a=1,2$.

For a $T$-equivariant vector bundle $E$, we denote by $c_i(E)$ the $i$-th $T$-equivariant Chern class and put $c_{-1/u}(E) = \sum_{i} c_i(E) (- 1/u)^i.$
The $T$-equivariant Euler class of $E$ is denoted by $e(E)$.

\subsection{Torus fixed points}\label{subsec:torus}

We describe the $T$-fixed points of $\mathfrak{M}({\bf v})$ and the tangent space at each fixed point.
These are well known and can be derived from the fact that $\mathfrak{M}({\bf v})$ is realized in the $\mathbb{Z} / N\mathbb{Z}$-fixed point subvariety of the Hilbert scheme of points on $\mathbb{C}^2$, for example.
For a partition $\lambda \in \mathcal{P}$, denote the number of $i$-cells in $\lambda$ by $v_i(\lambda)$ and put ${\bf v}(\lambda) = (v_i(\lambda)) \in (\mathbb{Z}_{\geq 0})^I$.
The $T$-fixed points of $\mathfrak{M}({\bf v})$ are parametrized by $\{ \lambda \in \mathcal{P} \mid {\bf v}(\lambda) = {\bf v} \}$.
Hence
\[
\bigsqcup_{\bf v} \mathfrak{M}({\bf v})^T \cong \mathcal{P}
\]
and we use the same symbols for fixed points and partitions.
The $T$-module structure of the tangent space $T_{\lambda} \mathfrak{M}({\bf v})$ at a fixed point $\lambda$ is given by
\[
T_{\lambda} \mathfrak{M}({\bf v}) = \bigoplus_{\substack{s \in \lambda\\ h_{\lambda}(s) \equiv 0 \bmod N}} \left( t_1^{l_{\lambda}(s) + 1} t_2^{-a_{\lambda}(s)} \oplus t_1^{-l_{\lambda}(s)} t_2^{a_{\lambda}(s) + 1} \right).
\]

Let $H_*^T (\mathfrak{M}({\bf v}))$ be the $T$-equivariant Borel-Moore homology group with $\mathbb{C}$-coefficient.
It is a module over $H_{T}^*(\text{pt}) = \mathbb{C}[\varepsilon_1, \varepsilon_2]$.
We consider the sum of the localized equivariant homology $H= \bigoplus_{{\bf v}} H_*^{T}(\mathfrak{M}({\bf v})) \otimes_{\mathbb{C}[\varepsilon_1, \varepsilon_2]} \mathbb{C}(\varepsilon_1, \varepsilon_2)$.
Let $i \colon \mathfrak{M}({\bf v})^T \to \mathfrak{M}({\bf v})$ be the inclusion.
Define $[\lambda] = {i}_{*}(1_{\lambda}) \in H_*^{T}(\mathfrak{M}({\bf v}))$ to be the class of the fixed point.
Then $\{ [\lambda] \}_{\lambda \in \mathcal{P}}$ forms a $\mathbb{C}(\varepsilon_1, \varepsilon_2)$-basis of $H$ by the localization theorem.
Define a symmetric bilinear form $\langle \ ,\ \rangle_{H}$ on $H$ by
\[
\langle \alpha, \beta \rangle_{H} = (-1)^{(1/2) \dim \mathfrak{M}({\bf v})}p_{*} ( i_*^{-1}\alpha \cap i_*^{-1}\beta)
\]
for $\alpha, \beta \in H_*^T(\mathfrak{M}({\bf v})) \otimes_{\mathbb{C}[\varepsilon_1, \varepsilon_2]} \mathbb{C}(\varepsilon_1, \varepsilon_2)$ and to be $0$ for different components, where $p \colon \mathfrak{M}({\bf v})^T \to \{\text{pt}\}$ is the projection.

\begin{prop}
Let $\lambda, \mu \in \mathcal{P}$.
Then we have
\[
\langle [\lambda], [\mu] \rangle_{H} = \delta_{\lambda \mu} \displaystyle \prod_{\substack{ s \in \lambda\\ h_{\lambda}(s) \equiv 0 \bmod N}} \Big( \varepsilon_1 (l_{\lambda}(s) + 1) - \varepsilon_2 a_{\lambda}(s) \Big) \Big( \varepsilon_1 l_{\lambda}(s) - \varepsilon_2 (a_{\lambda}(s) + 1) \Big).
\]
\end{prop}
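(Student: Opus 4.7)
The plan is to apply the equivariant localization theorem to the bilinear form and then read off the Euler class from the given $T$-module structure of $T_\lambda \mathfrak{M}({\bf v})$.

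First I would handle the off-diagonal vanishing. Since the fixed points of $\mathfrak{M}({\bf v})$ are isolated and parametrized by $\{\lambda \mid {\bf v}(\lambda)={\bf v}\}$, the pushforward $i_*\colon H_*^T(\mathfrak{M}({\bf v})^T)\otimes\mathbb{C}(\varepsilon_1,\varepsilon_2) \to H_*^T(\mathfrak{M}({\bf v}))\otimes \mathbb{C}(\varepsilon_1,\varepsilon_2)$ is an isomorphism. By definition $[\lambda] = i_*(1_\lambda)$, so $i_*^{-1}[\lambda] = 1_\lambda$ is supported on the component $\{\lambda\}$ of $\mathfrak{M}({\bf v})^T$. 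For $\lambda \neq \mu$ the classes $1_\lambda$ and $1_\mu$ are supported on disjoint connected components, so $i_*^{-1}[\lambda]\cap i_*^{-1}[\mu] = 0$, hence $\langle [\lambda],[\mu]\rangle_H = 0$.

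Second, I would compute the diagonal entries via the self-intersection formula. The composition $i^*\circ i_*$ on $\mathfrak{M}({\bf v})^T$ is multiplication by the equivariant Euler class of the normal bundle, which for isolated fixed points is the full tangent space. Equivalently, $p_*(i_*^{-1}[\lambda]\cap i_*^{-1}[\lambda]) = e(T_\lambda \mathfrak{M}({\bf v}))$. Using the $T$-weights given in Subsection~\ref{subsec:torus}, the Euler class is
\[
e(T_\lambda \mathfrak{M}({\bf v})) = \prod_{\substack{s\in\lambda \\ h_\lambda(s)\equiv 0\bmod N}} \Big((l_\lambda(s)+1)\varepsilon_1 - a_\lambda(s)\varepsilon_2\Big)\Big(-l_\lambda(s)\varepsilon_1 + (a_\lambda(s)+1)\varepsilon_2\Big).
\]

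Finally, I would track the sign. The number of factors in the product above is exactly $(1/2)\dim \mathfrak{M}({\bf v})$, since $\dim_\mathbb{C} T_\lambda \mathfrak{M}({\bf v}) = 2\cdot\#\{s\in\lambda\mid h_\lambda(s)\equiv 0\bmod N\}$. Pulling a $-1$ out of each second factor produces a global sign $(-1)^{(1/2)\dim\mathfrak{M}({\bf v})}$, which cancels the overall prefactor and yields the stated symmetric form of the diagonal entry.

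The calculation is essentially routine given that the tangent weights are provided; the only thing requiring some care is the correct interpretation of $p_*(i_*^{-1}\alpha \cap i_*^{-1}\beta)$ through the localization formula so that the Euler class emerges with the correct sign convention. There is no substantial obstacle beyond this bookkeeping.
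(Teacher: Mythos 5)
Your proposal is correct and follows essentially the same route as the paper, which simply records $\langle [\lambda],[\mu]\rangle_H = \delta_{\lambda\mu}(-1)^{(1/2)\dim\mathfrak{M}({\bf v}(\lambda))}e(T_\lambda\mathfrak{M}({\bf v}(\lambda)))$ by the standard localization/self-intersection argument and then reads off the Euler class from the stated tangent weights. Your additional bookkeeping of the sign (one $-1$ per hook with $h_\lambda(s)\equiv 0\bmod N$, cancelling the prefactor) is exactly the intended computation.
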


\begin{proof}
We have 
\[
\langle [\lambda], [\mu] \rangle_{H} = \delta_{\lambda \mu} (-1)^{(1/2) \dim \mathfrak{M}({\bf v}(\lambda))} e(T_{\lambda} \mathfrak{M}({\bf v}(\lambda)))
\]
by a standard argument.
The assertion follows from the formula of the tangent space.
\end{proof}

This proposition together with the norm formula (\ref{eq:norm}) implies that the assignment
\[
\left( \displaystyle \prod_{\substack{s \in \lambda\\ h_{\lambda}(s) \equiv 0 \bmod N}} \dfrac{1}{\varepsilon_1 (l_{\lambda}(s) + 1) - \varepsilon_2 a_{\lambda}(s)} \right) [\lambda] \mapsto P_{\lambda}
\]
gives an isometry between the localized equivariant homology $H$ and the Fock space $F$.
However we note that giving an isometry via orthogonal bases has an ambiguity of sign.
We also remark that this correspondence can be considered without Yangian action since so is the definition of $P_{\lambda}$.
We should make an isometry $H \cong F$ more canonical by taking account of the Yangian action.

\section{Affine Yangian action on the Fock space}\label{sec:Fock}

\subsection{Affine Yangian}

\begin{dfn}
The affine Yangian $Y_{\varepsilon_1,\varepsilon_2}(\hat{\mathfrak{sl}}_N)$ is the algebra over $\mathbb{C}(\varepsilon_1,\varepsilon_2)$ generated by $x_{i,r}^{\pm}, h_{i,r}$ $(i \in \mathbb{Z} / N\mathbb{Z}, r \in \mathbb{Z}_{\geq 0})$ subject to the relations:
\begin{itemize}
\item if $N \geq 3$,
\begin{equation}
[h_{i,r}, h_{j,s}] = 0, \label{eq:1}
\end{equation}
\begin{equation}
[x_{i,r}^{+}, x_{j,s}^{-}] = \delta_{ij} h_{i, r+s}, \label{eq:2}
\end{equation}
\begin{equation}
[h_{i,0}, x_{j,r}^{\pm}] = \pm a_{ij} x_{j,r}^{\pm}, \label{eq:3}
\end{equation}
\begin{equation}
[h_{i, r+1}, x_{j, s}^{\pm}] - [h_{i, r}, x_{j, s+1}^{\pm}] = 0 \ (i \neq j, j \pm 1), \label{eq:4}
\end{equation}
\begin{equation}
[h_{i, r+1}, x_{i, s}^{\pm}] - [h_{i, r}, x_{i, s+1}^{\pm}] = \pm (\varepsilon_1 + \varepsilon_2) (h_{i, r} x_{i, s}^{\pm} + x_{i, s}^{\pm} h_{i, r}), \label{eq:5}
\end{equation}
\begin{equation}
[h_{i, r+1}, x_{i-1, s}^{+}] - [h_{i, r}, x_{i-1, s+1}^{+}] = - \varepsilon_1 h_{i, r} x_{i-1, s}^{+} - \varepsilon_2 x_{i-1, s}^{+} h_{i, r}, \label{eq:6}
\end{equation}
\begin{equation}
[h_{i, r+1}, x_{i+1, s}^{+}] - [h_{i, r}, x_{i+1, s+1}^{+}] = - \varepsilon_2 h_{i, r} x_{i+1, s}^{+} - \varepsilon_1 x_{i+1, s}^{+} h_{i, r}, \label{eq:7}
\end{equation}
\begin{equation}
[h_{i, r+1}, x_{i-1, s}^{-}] - [h_{i, r}, x_{i-1, s+1}^{-}] = \varepsilon_2 h_{i, r} x_{i-1, s}^{-} + \varepsilon_1 x_{i-1, s}^{-} h_{i, r}, \label{eq:8}
\end{equation}
\begin{equation}
[h_{i, r+1}, x_{i+1, s}^{-}] - [h_{i, r}, x_{i+1, s+1}^{-}] = \varepsilon_1 h_{i, r} x_{i+1, s}^{-} + \varepsilon_2 x_{i+1, s}^{-} h_{i, r}, \label{eq:9}
\end{equation}
\begin{equation}
[x_{i, r+1}^{\pm}, x_{j, s}^{\pm}] - [x_{i, r}^{\pm}, x_{j, s+1}^{\pm}] = 0 \ (i \neq j, j \pm 1), \label{eq:10}
\end{equation}
\begin{equation}
[x_{i, r+1}^{\pm}, x_{i, s}^{\pm}] - [x_{i, r}^{\pm}, x_{i, s+1}^{\pm}] = \pm (\varepsilon_1 + \varepsilon_2) (x_{i, r}^{\pm} x_{i, s}^{\pm} + x_{i, s}^{\pm} x_{i, r}^{\pm}), \label{eq:11}
\end{equation}
\begin{equation}
[x_{i, r+1}^{+}, x_{i-1, s}^{+}] - [x_{i, r}^{+}, x_{i-1, s+1}^{+}] = - \varepsilon_1 x_{i, r}^{+} x_{i-1, s}^{+} - \varepsilon_2 x_{i-1, s}^{+} x_{i, r}^{+}, \label{eq:12}
\end{equation}
\begin{equation}
[x_{i, r+1}^{+}, x_{i+1, s}^{+}] - [x_{i, r}^{+}, x_{i+1, s+1}^{+}] = - \varepsilon_2 x_{i, r}^{+} x_{i+1, s}^{+} - \varepsilon_1 x_{i+1, s}^{+} x_{i, r}^{+}, \label{eq:13}
\end{equation}
\begin{equation}
[x_{i, r+1}^{-}, x_{i-1, s}^{-}] - [x_{i, r}^{-}, x_{i-1, s+1}^{-}] = \varepsilon_2 x_{i, r}^{-} x_{i-1, s}^{-} + \varepsilon_1 x_{i-1, s}^{-} x_{i, r}^{-}, \label{eq:14}
\end{equation}
\begin{equation}
[x_{i, r+1}^{-}, x_{i+1, s}^{-}] - [x_{i, r}^{-}, x_{i+1, s+1}^{-}] = \varepsilon_1 x_{i, r}^{-} x_{i+1, s}^{-} + \varepsilon_2 x_{i+1, s}^{-} x_{i, r}^{-}, \label{eq:15}
\end{equation}
\begin{equation}
\sum_{w \in \mathfrak{S}_{1 - a_{ij}}}[x_{i,r_{w(1)}}^{\pm}, [x_{i,r_{w(2)}}^{\pm}, \dots, [x_{i,r_{w(1 - a_{ij})}}^{\pm}, x_{j,s}^{\pm}]\dots]] = 0 \ (i \neq j), \label{eq:16}
\end{equation}
where
\[
a_{ij} =
\begin{cases}
2  \text{ if } i=j, \\
-1 \text{ if } i=j \pm 1, \\
0  \text{ otherwise,}
\end{cases}
\]
\item if $N=2$,
\[
(\ref{eq:1}), (\ref{eq:2}), (\ref{eq:3}), (\ref{eq:5}), 
\]
\begin{align}
&[h_{i,r+2}, x_{i+1,s}^{\pm}] - 2[h_{i,r+1}, x_{i+1,s+1}^{\pm}] + [h_{i,r}, x_{i+1,s+2}^{\pm}] \pm (\varepsilon_1 + \varepsilon_2) ( h_{i,r+1} x_{i+1,s}^{\pm} + x_{i+1,s}^{\pm} h_{i,r+1} ) \nonumber \\
&\mp (\varepsilon_1 + \varepsilon_2) ( h_{i,r} x_{i+1,s+1}^{\pm} + x_{i+1,s+1}^{\pm} h_{i,r} ) + \varepsilon_1 \varepsilon_2 [h_{i,r}, x_{i+1,s}^{\pm}] = 0 , \label{eq:17}
\end{align}
\[
 (\ref{eq:11}),
\]
\begin{align}
&[x_{i,r+2}^{\pm}, x_{i+1,s}^{\pm}] - 2[x_{i,r+1}^{\pm}, x_{i+1,s+1}^{\pm}] + [x_{i,r}^{\pm}, x_{i+1,s+2}^{\pm}] \pm (\varepsilon_1 + \varepsilon_2) ( x_{i,r+1}^{\pm} x_{i+1,s}^{\pm} + x_{i+1,s}^{\pm} x_{i,r+1}^{\pm} ) \nonumber \\
&\mp (\varepsilon_1 + \varepsilon_2) ( x_{i,r}^{\pm} x_{i+1,s+1}^{\pm} + x_{i+1,s+1}^{\pm} x_{i,r}^{\pm} ) + \varepsilon_1 \varepsilon_2 [x_{i,r}^{\pm}, x_{i+1,s}^{\pm}] = 0, \label{eq:18}
\end{align}
\[
(\ref{eq:16}),
\]
where
\[
a_{ij} =
\begin{cases}
2  \text{ if } (i,j) = (0,0), (1,1), \\
-2 \text{ if } (i,j) = (0,1), (1,0). \\
\end{cases}
\]
\end{itemize}
\end{dfn}

We set
\begin{align*}
x_i^{\pm}(u) &= \hbar \sum_{r \geq 0} x_{i,r}^{\pm} u^{-r-1}, \\
h_i(u) &= 1 + \hbar \sum_{r \geq 0} h_{i,r} u^{-r-1}.
\end{align*}

\begin{rem}\label{rem:Guay}
The affine Yangian of type $A_{N-1}^{(1)}$ with $N \geq 3$ was introduced by Guay~\cite{MR2199856}.
The defining relations given in \cite{MR2199856} are slightly different from ours.
The algebra $\hat{Y}_{\lambda, \beta}$ in \cite[Definition~3.3 and Remark~3.4]{MR2199856} whose generators are denoted by $X_{i,r}^{\pm}, H_{i,r}$ $(i \in \mathbb{Z} / N\mathbb{Z}, r \in \mathbb{Z}_{\geq 0})$ is isomorphic to $Y_{\varepsilon_1,\varepsilon_2}(\hat{\mathfrak{sl}}_N)$ by
\begin{align*}
\hbar \sum_{r \geq 0} X_{i,r}^{\pm} u^{-r-1} &= x_i^{\pm}(u - \dfrac{1}{2}i(\varepsilon_1 - \varepsilon_2)), \\
1 + \hbar \sum_{r \geq 0} H_{i,r} u^{-r-1} &= h_i(u - \dfrac{1}{2}i(\varepsilon_1 - \varepsilon_2))
\end{align*}
for $i \neq 0$ and
\begin{align*}
\hbar \sum_{r \geq 0} X_{0,r}^{\pm} u^{-r-1} &= x_0^{\pm}(u - \dfrac{1}{4}N(\varepsilon_1 - \varepsilon_2)), \\
1 + \hbar \sum_{r \geq 0} H_{0,r} u^{-r-1} &= h_0(u - \dfrac{1}{4}N(\varepsilon_1 - \varepsilon_2))
\end{align*}
under $\lambda = \hbar$ and $\beta = \dfrac{1}{2}\hbar - \dfrac{1}{4}N(\varepsilon_1 - \varepsilon_2)$.
The relations for $X_{i,r}^{\pm}, H_{i,r}$ $(i \neq 0, r \in \mathbb{Z}_{\geq 0})$ in $\hat{Y}_{\lambda, \beta}$ are exactly the same as those in $Y_{\hbar}(\mathfrak{sl}_N)$.
This justifies that we use the same symbols $X_{i,r}^{\pm}, H_{i,r}$ both for elements of $Y_{\varepsilon_1,\varepsilon_2}(\hat{\mathfrak{sl}}_N) \cong \hat{Y}_{\lambda, \beta}$ and $Y_{\hbar}(\mathfrak{sl}_N)$.
Guay \cite[Corollary~6.1]{MR2323534} proved for $N \geq 4$ that the subalgebra of $\hat{Y}_{\lambda, \beta}$ generated by $X_{i,r}^{\pm}, H_{i,r}$ $(i \neq 0, r \in \mathbb{Z}_{\geq 0})$ is isomorphic to $Y_{\hbar}(\mathfrak{sl}_N)$.

When $N=2$, Boyarchenko-Levendorski{\u\i} \cite{MR1310291} defined the Yangian for $\hat{\mathfrak{sl}}_2$.
However the author does not know if our definition may or may not be equivalent to theirs when the parameters are specialized.
The relations above come from the consideration of the quiver variety so that Theorem~\ref{thm:Varagnolo} will hold.
As remarked in \cite[Remark~3.13]{MR2784748}, relations may be changed if we choose a different $T$-action.
If we define $X_{1,r}^{\pm}, H_{1,r}$ by
\begin{align*}
\hbar \sum_{r \geq 0} X_{1,r}^{\pm} u^{-r-1} &= x_1^{\pm}(u - \dfrac{1}{2}(\varepsilon_1 - \varepsilon_2)), \\
1 + \hbar \sum_{r \geq 0} H_{1,r} u^{-r-1} &= h_1(u - \dfrac{1}{2}(\varepsilon_1 - \varepsilon_2)),
\end{align*}
then they satisfy the defining relations of $Y_{\hbar}(\mathfrak{sl}_2)$.
\end{rem}

The elements $x_{i,0}^{\pm}, h_{i,0}$ $(i \in \mathbb{Z} / N \mathbb{Z})$ satisfy the defining relations of the affine Lie algebra $\hat{\mathfrak{sl}}_N$.
Hence any action of the affine Yangian $Y_{\varepsilon_1,\varepsilon_2}(\hat{\mathfrak{sl}}_N)$ restricts to actions of the Yangian $Y_{\hbar}(\mathfrak{sl}_N)$ and the affine Lie algebra $\hat{\mathfrak{sl}}_N$.

\subsection{Affine Yangian action on the homology of the quiver variety}

We recall an action of the affine Yangian $Y_{\varepsilon_1,\varepsilon_2}(\hat{\mathfrak{sl}}_N)$ on the localized equivariant homology $H$.
For each ${\bf v} \in (\mathbb{Z}_{\geq 0})^{I}$ and $i \in I$, let $p_a \colon \mathfrak{M}({\bf v}^1) \times \mathfrak{M}({\bf v}^2) \to \mathfrak{M}({\bf v}^a)$ denotes the $a$-th projection for $a=1,2$, where ${\bf v}^1 = {\bf v} - \alpha_i$ and ${\bf v}^2 = {\bf v}$.

\begin{thm}[Varagnolo~\cite{MR1818101}]\label{thm:Varagnolo}
The assignment
\begin{align*}
x_{i,r}^{+} &\mapsto \sum_{\bf v} (-1)^{\delta_{i,0} - (v_i - v_{i-1}) + 1} {p_1}_* (c_1(\mathcal{L}_i({\bf v}))^{r} \cap p_2^*(-)), \\
x_{i,r}^{-} &\mapsto \sum_{\bf v} (-1)^{v_i - v_{i+1}} {p_2}_* (c_1(\mathcal{L}_i({\bf v}))^{r} \cap p_1^*(-)), \\
h_i(u) &\mapsto \sum_{\bf v} \dfrac{c_{-1/u}(\mathcal{C}_i({\bf v}))}{c_{-1/u}(t_1 t_2 \mathcal{C}_i({\bf v}))} \cap (-)
\end{align*}
gives an action of the affine Yangian $Y_{\varepsilon_1,\varepsilon_2}(\hat{\mathfrak{sl}}_N)$ on the localized equivariant homology $H = \bigoplus_{{\bf v}} H_*^{T}(\mathfrak{M}({\bf v})) \otimes_{\mathbb{C}[\varepsilon_1, \varepsilon_2]} \mathbb{C}(\varepsilon_1, \varepsilon_2)$.
\end{thm}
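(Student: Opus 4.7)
The plan is to realize this as a direct application of Varagnolo's construction \cite{MR1818101} for symmetric Kac-Moody type, specialized to the cyclic quiver $A_{N-1}^{(1)}$. Her theorem produces operators on $\bigoplus_{\bf v} H_*^T(\mathfrak{M}({\bf v})) \otimes_{\mathbb{C}[\varepsilon_1,\varepsilon_2]} \mathbb{C}(\varepsilon_1,\varepsilon_2)$ out of the Hecke correspondences $\mathfrak{P}_i({\bf v})$ twisted by powers of $c_1(\mathcal{L}_i({\bf v}))$, together with Cartan series built from Chern polynomials of the tautological complex $\mathcal{C}_i({\bf v})$; under the identification of parameters in Remark~\ref{rem:Guay} the resulting Yangian is exactly $Y_{\varepsilon_1,\varepsilon_2}(\hat{\mathfrak{sl}}_N)$ in our presentation. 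What remains is to check that our formulas for $x_{i,r}^{\pm}$ and $h_i(u)$ coincide with Varagnolo's output up to the inserted signs.

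I would proceed in three steps. Step one: verify well-definedness, using smoothness of $\mathfrak{P}_i({\bf v})$, properness of the projections on each connected component, and $T$-equivariance. The Cartan operators $h_i(u)$ are diagonal on the fixed-point basis with eigenvalues read off from $\mathcal{C}_i({\bf v})|_{\lambda}$, which yields (\ref{eq:1}) and (\ref{eq:3}) at once. Step two: derive the commutators $[x_{i,r}^{+},x_{j,s}^{-}]=\delta_{ij}h_{i,r+s}$ (relation (\ref{eq:2})) from a base-change comparison of the two composites of $\mathfrak{P}_i({\bf v})$ with itself and an excess-intersection calculation matching the Chern class expression defining $h_i(u)$. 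Step three: verify relations (\ref{eq:4})--(\ref{eq:15}) (respectively (\ref{eq:17}), (\ref{eq:18}) when $N=2$) together with the Serre relations (\ref{eq:16}); these reflect the $T$-weights of the normal bundles of the nested correspondences $\mathfrak{P}_i({\bf v})\times_{\mathfrak{M}}\mathfrak{P}_j({\bf v}')$ across adjacent vertices, with the parameters $\varepsilon_1,\varepsilon_2$ entering through the weights of $\mathcal{L}_i({\bf v})$ and of the relative tangent directions. For $N=2$, the exotic quadratic relations (\ref{eq:17}), (\ref{eq:18}) reduce to the standard Yangian ones by the spectral shift of Remark~\ref{rem:Guay}, so the verification parallels the generic case.

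The main obstacle is the sign normalization $(-1)^{\delta_{i,0}-(v_i-v_{i-1})+1}$ and $(-1)^{v_i-v_{i+1}}$, which is not produced by Varagnolo's construction as stated and must be introduced so that the resulting action both satisfies our specific relations and, on the Chevalley generators, restricts to the standard $\hat{\mathfrak{sl}}_N$-action (\ref{eq:affine1})--(\ref{eq:affine3}) under the identification with the Fock space of Theorem~\ref{thm:main1}. Pinning these signs down requires a careful tracking of orientations of the fixed-point components of $\mathfrak{P}_i({\bf v})$ and of the relative signs of the two projections $p_1,p_2$. I would postpone the detailed accounting to the explicit fixed-point computation in Section~\ref{sec:Fock}, where the action is written out on $\{[\lambda]\}_{\lambda\in\mathcal{P}}$ and cross-checked against the formulas of Theorem~\ref{thm:main1}; this is precisely the sign-normalization discussion promised in the introduction.
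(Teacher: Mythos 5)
Your overall strategy --- reduce to Varagnolo's construction for the cyclic quiver, with the two-dimensional torus replacing her one-parameter torus, and then verify the relations by fixed-point/excess-intersection arguments --- is the same as the paper's, which simply cites \cite{MR1818101}, remarks that the passage from $\varepsilon_1=\varepsilon_2$ to two parameters is straightforward, and supplies an appendix for the one genuinely new point. But your handling of that point, the case $N=2$, contains a real error. You claim that the relations (\ref{eq:17}) and (\ref{eq:18}) ``reduce to the standard Yangian ones by the spectral shift of Remark~\ref{rem:Guay}.'' They do not. The shift in Remark~\ref{rem:Guay} for $N=2$ concerns only the generators $x_{1,r}^{\pm}, h_{1,r}$ at the single vertex $i=1$ and yields the $Y_{\hbar}(\mathfrak{sl}_2)$ relations \emph{among those generators}; the relations (\ref{eq:17}), (\ref{eq:18}) couple the two vertices joined by the doubled edge of the $A_1^{(1)}$ quiver ($a_{01}=-2$) and are second-order difference relations in the spectral parameter. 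In generating-function form they encode the ratio $\dfrac{(u-c-\varepsilon_1)(u-c-\varepsilon_2)}{(u-c+\varepsilon_1)(u-c+\varepsilon_2)}$, with quadratic numerator and denominator, and no affine substitution $u \mapsto u - \mathrm{const}$ can convert such a relation into the first-order relations (\ref{eq:6})--(\ref{eq:9}), (\ref{eq:12})--(\ref{eq:15}) of the $N\geq 3$ case. This is exactly why the paper verifies them directly: for (\ref{eq:17}) one sets $X = c_{-1/u}\bigl((t_1+t_2-t_1^{-1}-t_2^{-1})\mathcal{L}_{i+1}({\bf v}^2)\bigr)$ and reduces to the identity $(X-1)(u-c_{i+1})^2 + (\varepsilon_1+\varepsilon_2)(X+1)(u-c_{i+1}) + \varepsilon_1\varepsilon_2(X-1)=0$, and for (\ref{eq:18}) one compares the Euler/Chern classes of the bundles $\mathcal{E}_{i,i+1}$ and $\mathcal{E}_{i+1,i}$ on the nested correspondence to get $(c_i-c_{i+1}+\varepsilon_1)(c_i-c_{i+1}+\varepsilon_2)\,x_{i,r}^{+}x_{i+1,s}^{+} = (c_{i+1}-c_i+\varepsilon_1)(c_{i+1}-c_i+\varepsilon_2)\,x_{i+1,s}^{+}x_{i,r}^{+}$. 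Without an argument of this kind your proof does not establish the theorem for $N=2$.

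A secondary concern: you propose to pin down the signs $(-1)^{\delta_{i,0}-(v_i-v_{i-1})+1}$ and $(-1)^{v_i-v_{i+1}}$ by cross-checking against the Fock-space formulas of Section~\ref{sec:Fock}. That is circular as stated, since Proposition~\ref{prop:formula} and Theorems~\ref{thm:formula} and \ref{thm:main2} presuppose Theorem~\ref{thm:Varagnolo}. These signs, which depend only on $({\bf v},i)$, are part of the Nakajima--Varagnolo normalization needed already for the relation $[x_{i,r}^{+},x_{j,s}^{-}]=\delta_{ij}h_{i,r+s}$ and for the quadratic relations (where the two orderings of $x_i^{\pm}x_j^{\pm}$ pick up different products of component-dependent signs); they must be checked within the geometric argument itself, not imported afterwards from the Fock-space identification.
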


\begin{rem}
In \cite{MR1818101}, the defining relations are checked only in the case $\varepsilon_1 = \varepsilon_2$ but a modification to the two-parameter case is straightforward.
We give a sketch of the proof for the case $N=2$ in the appendix.
\end{rem}

We can derive a formula for the actions of the generators of the affine Yangian $Y_{\varepsilon_1, \varepsilon_2}(\hat{\mathfrak{sl}}_N)$ on the fixed point basis in a way similar to the one used by Varagnolo-Vasserot~\cite{MR1722361} and Nagao~\cite{MR2583334} in the case of the equivariant $K$-group.
We give a proof for the completeness.

\begin{prop}\label{prop:formula}
The action of the affine Yangian $Y_{\varepsilon_1, \varepsilon_2}(\hat{\mathfrak{sl}}_N)$ on the localized equivariant homology $H$ is given by
\begin{align*}
& x_{i,r}^+ [\lambda] \\
&= \sum_{(x,y) \in R_{\lambda,i}}(-1)^{v_i(\lambda) - v_{i+1}(\lambda) + \# A_{\lambda,i} - \# R_{\lambda \setminus (x,y),i}} \left( \varepsilon_1 x + \varepsilon_2 y \right)^{r} \\
& \dfrac{\displaystyle\prod_{(x',y') \in A_{\lambda,i}} \varepsilon_1 (x - x' + 1) + \varepsilon_2 (y - y' + 1)}{\displaystyle\prod_{(x',y') \in R_{\lambda \setminus (x,y),i}} \varepsilon_1 (x - x') + \varepsilon_2 (y - y')} [\lambda \setminus (x,y)],
\end{align*}
\begin{align*}
& x_{i,r}^- [\mu] \\
&= \sum_{(x,y) \in A_{\mu,i}} (-1)^{v_i(\mu) - v_{i+1}(\mu) + 1 + \# A_{\mu \cup (x,y),i} - \# R_{\mu,i}} \left( \varepsilon_1 x + \varepsilon_2 y \right)^{r} \\
& \dfrac{\displaystyle\prod_{(x',y') \in R_{\mu,i}} \varepsilon_1 (x - x' - 1) + \varepsilon_2 (y - y' - 1)}{\displaystyle\prod_{(x',y') \in A_{\mu \cup (x,y),i}} \varepsilon_1 (x - x') + \varepsilon_2 (y - y')} [\mu \cup (x,y)],
\end{align*}
\begin{align*}
& h_i(u) [\lambda]\\
&= \prod_{(x,y) \in A_{\lambda,i}}\dfrac{u - (\varepsilon_1(x - 1) + \varepsilon_2(y - 1))}{u - (\varepsilon_1 x + \varepsilon_2 y)} \prod_{(x,y) \in R_{\lambda,i}}\dfrac{u - (\varepsilon_1 (x + 1) + \varepsilon_2 (y + 1))}{u - (\varepsilon_1 x + \varepsilon_2  y)} [\lambda].
\end{align*}
\end{prop}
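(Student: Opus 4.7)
The plan is to apply equivariant localization to the correspondence formulas of Theorem~\ref{thm:Varagnolo}. Since $\mathfrak{M}({\bf v})$ is smooth with isolated $T$-fixed points parametrized by partitions $\lambda$ satisfying ${\bf v}(\lambda) = {\bf v}$, the classes $\{[\lambda]\}$ form a basis of $H$ after localization, and the action of any generator on $[\lambda]$ is determined by the restrictions of the relevant equivariant characteristic classes to these fixed points.

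For $h_i(u)$ I would first record the standard character formula
\[
\mathcal{V}_j\big|_{\lambda} = \sum_{\substack{s=(x,y) \in \lambda\\ y - x \equiv j \bmod N}} t_1^{x} t_2^{y},
\]
together with $\mathcal{W}_0|_\lambda = 1$, which follows from the realization of $\mathfrak{M}({\bf v})$ inside the $\mathbb{Z}/N\mathbb{Z}$-fixed locus of the Hilbert scheme of points on $\mathbb{C}^2$. Substituting into $\mathcal{C}_i({\bf v}) = (t_1 t_2)^{-1}(t_1 \mathcal{V}_{i+1} + t_2 \mathcal{V}_{i-1} - (t_1 t_2 + 1)\mathcal{V}_i + \mathcal{W}_i)$ and collecting terms cell by cell, a direct telescoping identifies the virtual character $\mathcal{C}_i({\bf v})|_\lambda$ with $\sum_{(x,y) \in A_{\lambda,i}} t_1^{x-1} t_2^{y-1} - \sum_{(x,y) \in R_{\lambda,i}} t_1^{x} t_2^{y}$, i.e.\ a sum of weights supported precisely on boundary cells. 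Since $h_i(u)$ acts on $[\lambda]$ as multiplication by the ratio $c_{-1/u}(\mathcal{C}_i|_\lambda)/c_{-1/u}(t_1 t_2 \mathcal{C}_i|_\lambda)$, and each weight $t_1^a t_2^b$ contributes a factor $(u - \varepsilon_1 a - \varepsilon_2 b)/(u - \varepsilon_1(a+1) - \varepsilon_2(b+1))$, the product over $A_{\lambda,i} \cup R_{\lambda,i}$ stated in the proposition follows immediately.

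For $x_{i,r}^{\pm}$ I would localize on the correspondence $\mathfrak{P}_i({\bf v})$. Its $T$-fixed points are pairs $(\lambda\setminus s, \lambda)$ indexed by removable $i$-cells $s = (x,y) \in R_{\lambda,i}$, and the line bundle $\mathcal{L}_i = p_2^* \mathcal{V}_i^2 / p_1^* \mathcal{V}_i^1$ restricts at such a point to the one-dimensional $T$-representation of weight $t_1^x t_2^y$, so $c_1(\mathcal{L}_i)$ acts as multiplication by $\varepsilon_1 x + \varepsilon_2 y$. Applying equivariant localization to the pushforward ${p_1}_*(c_1(\mathcal{L}_i)^r \cap p_2^*[\lambda])$, only the fixed points of $\mathfrak{P}_i({\bf v})$ mapping to $\lambda$ under $p_2$ contribute, each producing $(\varepsilon_1 x + \varepsilon_2 y)^r$ times the Euler-class ratio $e(T_\lambda \mathfrak{M}({\bf v}^2))/e(T_{(\lambda\setminus s,\lambda)}\mathfrak{P}_i({\bf v}))$ as the coefficient of $[\lambda\setminus s]$. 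An entirely analogous computation, with the roles of $p_1$ and $p_2$ swapped, handles $x_{i,r}^-$.

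The main obstacle is computing the tangent character $T_{(\mu,\lambda)}\mathfrak{P}_i({\bf v})$ as a $T$-module precisely enough that the Euler-class quotient collapses into the clean products over $A_{\lambda,i}$ and $R_{\mu,i}$ appearing in the proposition. I would extract it from the deformation complex of the nested quiver datum defining $\mathfrak{P}_i({\bf v})$, following Nakajima's treatment of Hecke correspondences for quiver varieties: the weights of $T_{(\mu,\lambda)}\mathfrak{P}_i({\bf v})$ differ from those of $T_\lambda \mathfrak{M}({\bf v}^2)$ only in contributions indexed by boundary cells in the same row or column as $s$, and those differences produce exactly the required products after cancellation of common factors. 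Finally, pinning down the sign $(-1)^{v_i(\lambda) - v_{i+1}(\lambda) + \# A_{\lambda,i} - \# R_{\lambda \setminus (x,y),i}}$ requires combining the sign in Theorem~\ref{thm:Varagnolo} with the parity produced by the Euler-class ratio; this bookkeeping is delicate but is most cleanly settled by specializing to $r = 0$ and matching against the standard $\hat{\mathfrak{sl}}_N$-action on $\{[\lambda]\}$, after which higher $r$ only rescales by $(\varepsilon_1 x + \varepsilon_2 y)^r$ and leaves the sign unchanged.
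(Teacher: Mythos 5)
Your overall strategy---equivariant localization applied to the correspondence formulas of Theorem~\ref{thm:Varagnolo}---is exactly the paper's. The treatment of $h_i(u)$ is correct and essentially identical to the paper's (the paper obtains the character identity for $\bigl((t_1+t_2-t_1t_2-1)V_\lambda+W\bigr)_i$ from Varagnolo--Vasserot rather than by "direct telescoping," but that is the same computation). For $x_{i,r}^{\pm}$, your Euler-class ratio $e(T_\lambda)/e(T_{(\mu,\lambda)}\mathfrak{P}_i({\bf v}))$ is equivalent to the paper's $e(N_{\mu\lambda})/e(T_\mu)$ via $T_{(\mu,\lambda)}\mathfrak{P}_i({\bf v})\oplus N_{\mu\lambda}=T_\mu\oplus T_\lambda$, and the paper likewise imports the explicit $T$-module formulas for $N_{\mu\lambda}$, $T_\mu$, $T_\lambda$ from the literature. (One inaccuracy in your sketch: the weights by which these characters differ are indexed by \emph{all} addable and removable $i$-cells of $\lambda$ and $\mu$, not only by boundary cells in the same row or column as $s$.)

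The genuine gap is your treatment of the sign. You propose to determine it by "specializing to $r=0$ and matching against the standard $\hat{\mathfrak{sl}}_N$-action on $\{[\lambda]\}$," but no such independent formula exists prior to this proposition: the fixed point classes $[\lambda]$ are not a priori identified with any basis on which the $\hat{\mathfrak{sl}}_N$-action is known, and that identification is precisely what Theorem~\ref{thm:main2} establishes \emph{using} this proposition. Worse, the signs in the statement are genuinely nontrivial---$x_{i,0}^{\pm}$ acting on $[\lambda]$ does \emph{not} reproduce the sign-free standard Fock space action (\ref{eq:affine1})--(\ref{eq:affine3}); that is exactly why the paper must introduce the renormalization $b_\lambda=(-1)^{\varepsilon_\lambda}b_\lambda'$ and Lemma~\ref{lem:sign} afterwards. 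So a matching argument would either be circular or would yield the wrong answer. In fact there is no sign ambiguity to resolve: the localization computation already produces the sign. Concretely, $e(N_{\mu\lambda})/e(T_\mu)$ is a manifestly positive product over $A_{\lambda,i}$ and $R_{\mu,i}$, while $e(N_{\mu\lambda})/e(T_\lambda)$ carries the factor $(-1)^{\#A_{\lambda,i}-\#R_{\mu,i}}$; combining these with the prefactors $(-1)^{\delta_{i,0}-(v_i-v_{i-1})+1}$ and $(-1)^{v_i-v_{i+1}}$ from Theorem~\ref{thm:Varagnolo} and the dimension-count identity $2v_i(\lambda)-v_{i+1}(\lambda)-v_{i-1}(\lambda)=\delta_{i,0}-\#A_{\lambda,i}+\#R_{\lambda,i}$ gives the stated exponents directly. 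This last identity, which you do not mention, is the missing ingredient.
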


\begin{proof}
Let $N_{\mu \lambda}$ be the fiber at $(\mu, \lambda)$ of the normal bundle to $\mathfrak{P}_i({\bf v}) \subseteq \mathfrak{M}({\bf v}^1) \times \mathfrak{M}({\bf v}^2)$.
We put $T_{\lambda} = T_{\lambda}\mathfrak{M}({\bf v})$.
The coefficient of $[\mu]$ in $x_{i,r}^+[\lambda]$ is given by
\begin{align*}
\dfrac{\langle x_{i,r}^+[\lambda], [\mu] \rangle_{H}}{\langle [\mu], [\mu] \rangle_{H}} =
\delta((\mu, \lambda) \in \mathfrak{P}_i({\bf v}))(-1)^{\delta_{i,0} - (v_i(\lambda) - v_{i-1}(\lambda)) + 1} c_1(\mathcal{L}_i({\bf v}))^r \dfrac{e(N_{\mu \lambda})}{e(T_{\mu})}.
\end{align*}
Similarly the coefficient of $[\lambda]$ in $x_{i,r}^-[\mu]$ is given by
\begin{align*}
\dfrac{\langle x_{i,r}^-[\mu], [\lambda] \rangle_{H}}{\langle [\lambda], [\lambda] \rangle_{H}} =
\delta((\mu, \lambda) \in \mathfrak{P}_i({\bf v})) (-1)^{v_i(\lambda) - v_{i+1}(\lambda)} c_1(\mathcal{L}_i({\bf v}))^r \dfrac{e(N_{\mu \lambda})}{e(T_{\lambda})}.
\end{align*}
Note that $(\mu,\lambda) \in \mathfrak{P}_i({\bf v})$ if and only if $\mu$ is obtained from $\lambda$ by removing an $i$-cell with ${\bf v}^1 = {\bf v}(\mu)$ and ${\bf v}^2 = {\bf v}(\lambda)$.

For a $T$-module $E = \bigoplus_{a,b} t_1^a t_2^b$, we define $E_i$ by
\[
E_i = \bigoplus_{b-a \equiv i \bmod N} t_1^a t_2^b.
\]
For $\lambda \in \mathcal{P}$, put
\[
V_{\lambda} = \bigoplus_{(x,y) \in \lambda} t_1^x t_2^y.
\]
Then the fiber at $\lambda$ of the tautological bundle $\mathcal{V}_i$ is isomorphic to $(V_{\lambda})_i$ as a $T$-module.
We regard $W$ as the trivial one-dimensional $T$-module.
In the sequel, equalities of $T$-modules are considered in the Grothendieck group.

Suppose $(x,y) \in R_{\lambda,i}$ and $\mu = \lambda \setminus (x,y)$.
Then we have
\[
V_{\lambda} - V_{\mu} = t_1^x t_2^y
\]
and
\[
c_1(\mathcal{L}_i({\bf v})) = \varepsilon_1 x + \varepsilon_2 y.
\]
We use equalities proved by Varagnolo-Vasserot \cite[Lemma 7 and 8]{MR1722361}:
\[
\bigl( (t_1 + t_2 - t_1 t_2 - 1)V_{\lambda} + W \bigr)_i = \sum_{(x',y') \in A_{\lambda,i}} t_1^{x'} t_2^{y'} - \sum_{(x',y') \in R_{\lambda,i}} t_1^{x'+1} t_2^{y'+1},
\]
\[
\bigl( (t_1 + t_2 - t_1 t_2 - 1)V_{\lambda}^* + t_1 t_2 W^* \bigr)_{-i} = \sum_{(x',y') \in A_{\lambda,i}} t_1^{-x'+1} t_2^{-y'+1} - \sum_{(x',y') \in R_{\lambda,i}} t_1^{-x'} t_2^{-y'},
\]
\[
N_{\mu \lambda} = \left( (t_1 + t_2 - t_1 t_2 - 1) V_{\mu}^* V_{\lambda} +  t_1 t_2 W^* V_{\lambda} + V_{\mu}^* W - t_1 t_2 \right)_0,
\]
\[
T_{\mu} = \bigl( (t_1 + t_2 - t_1 t_2 - 1) V_{\mu}^* V_{\mu} +  t_1 t_2 W^* V_{\mu} + V_{\mu}^* W \bigr)_0,
\]
\[
T_{\lambda} = \bigl( (t_1 + t_2 - t_1 t_2 - 1) V_{\lambda}^* V_{\lambda} +  t_1 t_2 W^* V_{\lambda} + V_{\lambda}^* W \bigr)_0.
\]
Using above formulas, we calculate $N_{\mu \lambda} - T_{\mu}$ and $N_{\mu \lambda} - T_{\lambda}$ as follows:
\begin{align*}
N_{\mu \lambda} - T_{\mu} &= \left( (t_1 + t_2 - t_1 t_2 - 1) V_{\mu}^* (V_{\lambda} - V_{\mu}) + t_1 t_2 W^* (V_{\lambda} - V_{\mu}) - t_1 t_2 \right)_0 \\
&= t_1^x t_2^y \left( (t_1 + t_2 - t_1 t_2 - 1) V_{\mu}^* + t_1 t_2 W^* \right)_{-i} - t_1 t_2 \\
&= t_1^x t_2^y \left( \sum_{(x',y') \in A_{\mu,i}} t_1^{-x'+1} t_2^{-y'+1} - \sum_{(x',y') \in R_{\mu,i}} t_1^{-x'} t_2^{-y'} \right) - t_ 1 t_2 \\
&= \sum_{(x',y') \in A_{\lambda,i}} t_1^{x-x'+1} t_2^{y-y'+1} - \sum_{(x',y') \in R_{\mu,i}} t_1^{x-x'} t_2^{y-y'},
\end{align*}
\begin{align*}
N_{\mu \lambda} - T_{\lambda} &= \left( (t_1 + t_2 - t_1 t_2 - 1) V_{\lambda} (V_{\mu}^* - V_{\lambda}^*) + W (V_{\mu}^* - V_{\lambda}^*) - t_1 t_2 \right)_0 \\
&= - t_1^{-x} t_2^{-y} \bigl( (t_1 + t_2 - t_1 t_2 - 1)V_{\lambda} + W \bigr)_i - t_1 t_2 \\
&= - t_1^{-x} t_2^{-y} \left( \sum_{(x',y') \in A_{\lambda,i}} t_1^{x'} t_2^{y'} - \sum_{(x',y') \in R_{\lambda,i}} t_1^{x'+1} t_2^{y'+1} \right) - t_ 1 t_2 \\
&= - \sum_{(x',y') \in A_{\lambda,i}} t_1^{-x+x'} t_2^{-y+y'} + \sum_{(x',y') \in R_{\mu,i}} t_1^{-x+x'+1} t_2^{-y+y'+1}.
\end{align*}
Hence we have
\[
\dfrac{e(N_{\mu \lambda})}{e(T_{\mu})} = \dfrac{\displaystyle\prod_{(x',y') \in A_{\lambda,i}} \varepsilon_1 (x - x' + 1) + \varepsilon_2 (y - y' + 1)}{\displaystyle\prod_{(x',y') \in R_{\mu,i}} \varepsilon_1 (x - x') + \varepsilon_2 (y - y')}
\]
and
\[
\dfrac{e(N_{\mu \lambda})}{e(T_{\lambda})} = (-1)^{\# A_{\lambda,i} - \# R_{\mu,i}} \dfrac{\displaystyle\prod_{(x',y') \in R_{\mu,i}} \varepsilon_1 (x - x' - 1) + \varepsilon_2 (y - y' - 1)}{\displaystyle\prod_{(x',y') \in A_{\lambda,i}} \varepsilon_1 (x - x') + \varepsilon_2 (y - y')},
\]
which complete the proof for the actions of $x_{i,r}^{\pm}$ with the following easy observation:
\[
2v_i(\lambda) - v_{i+1}(\lambda) - v_{i-1}(\lambda) = \delta_{i,0} - \# A_{\lambda,i} + \# R_{\lambda,i}.
\]

For the action of $h_{i}(u)$, we see that
\begin{align*}
\mathcal{C}_i({\bf v}) |_{\lambda} &= (t_1 t_2)^{-1} \bigl( (t_1 + t_2 - t_1 t_2 - 1)V_{\lambda} + W \bigr)_i \\
&= \sum_{(x,y) \in A_{\lambda,i}} t_1^{x-1} t_2^{y-1} - \sum_{(x,y) \in R_{\lambda,i}} t_1^{x} t_2^{y},
\end{align*}
which completes the proof.
\end{proof}

Put
\[
b_{\lambda}' = \left( \displaystyle \prod_{\substack{s \in \lambda\\ h_{\lambda}(s) \equiv 0 \bmod N}} \dfrac{1}{\varepsilon_1 (l_{\lambda}(s) + 1) - \varepsilon_2 a_{\lambda}(s)} \right) [\lambda].
\]
Then by Lemma~\ref{lem:ratio} (ii) and Proposition~\ref{prop:formula} we obtain the following formula:
\begin{align*}
& x_{i,r}^+ b_{\lambda}' \\
&= \sum_{(x,y) \in R_{\lambda,i}} (-1)^{v_{i}(\lambda) - v_{i+1}(\lambda) + \# A_{\lambda,i, (x,y)}^{l} - \# R_{\lambda \setminus (x,y),i, (x,y)}^{l}} \left( \varepsilon_1 x + \varepsilon_2 y \right)^{r} \\
& \displaystyle\prod_{(x',y') \in A_{\lambda,i, (x,y)}^{r}} \dfrac{\varepsilon_1 (x - x' + 1) + \varepsilon_2 (y - y' + 1)}{\varepsilon_1 (x - x') + \varepsilon_2 (y - y')} \displaystyle\prod_{(x',y') \in R_{\lambda \setminus (x,y),i,(x,y)}^{r}} \dfrac{\varepsilon_1 (x - x' - 1) + \varepsilon_2 (y - y' - 1)}{\varepsilon_1 (x - x') + \varepsilon_2 (y - y')} b_{\lambda \setminus (x,y)}',
\end{align*}
\begin{align*}
& x_{i,r}^- b_{\mu}' \\
&= \sum_{(x,y) \in A_{\mu,i}} (-1)^{v_{i}(\mu) - v_{i+1}(\mu) + 1 + \# A_{\mu \cup (x,y),i, (x,y)}^{l} - \# R_{\mu,i, (x,y)}^{l}} \left( \varepsilon_1 x + \varepsilon_2 y \right)^{r} \\
& \displaystyle\prod_{(x',y') \in A_{\mu \cup (x,y),i, (x,y)}^{l}} \dfrac{\varepsilon_1 (x - x' + 1) + \varepsilon_2 (y - y' + 1)}{\varepsilon_1 (x - x') + \varepsilon_2 (y - y')} \displaystyle\prod_{(x',y') \in R_{\mu,i,(x,y)}^{l}} \dfrac{\varepsilon_1 (x - x' - 1) + \varepsilon_2 (y - y' - 1)}{\varepsilon_1 (x - x') + \varepsilon_2 (y - y')} b_{\mu \cup (x,y)}'.
\end{align*}

We shall renormalize $b_{\lambda}'$ so that the signs in the above formulas are all $1$.
For each $\lambda \in \mathcal{P}$, let $d$ be the minimum odd integer such that $l(\lambda) < Nd$ and put
\[
\varepsilon_{\lambda} = \# \{ (a,b) \mid 1 \leq a < b \leq Nd,\ j(\lambda)_a  \geq j(\lambda)_b \}.
\]

\begin{lem}\label{lem:sign}
Let $\lambda, \mu \in \mathcal{P}$ such that $\mu$ is obtained from $\lambda$ by removing an $i$-cell.
Then we have
\[
\varepsilon_{\lambda} - \varepsilon_{\mu} \equiv v_i(\lambda) - v_{i+1}(\lambda) + \# A_{\lambda,i}^{l} - \# R_{\mu,i}^{l} \bmod 2.
\]
\end{lem}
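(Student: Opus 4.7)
The plan is to establish this parity identity by a direct computation via the parametrization $\lambda \leftrightarrow (j(\lambda)_a, m(\lambda)_a)_{a \ge 1}$ recalled in Subsection~\ref{subsection:preliminaries}, matching the two sides case by case. Set $a_0 = x + 1$, so that the removed $i$-cell is $(a_0 - 1, \lambda_{a_0} - 1)$ and $j(\lambda)_{a_0} = i + 1$. Since only $\lambda_{a_0}$ changes under the removal, the pair $(j_a, m_a)$ is unchanged for $a \ne a_0$; at $a_0$, either $j_{a_0}$ drops from $i+1$ to $i$ (when $i \ne 0$, with $m_{a_0}$ unchanged) or $j_{a_0}$ jumps from $1$ to $N$ (when $i = 0$, with $m_{a_0}$ incremented by one).

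First I would compute $\varepsilon_\lambda - \varepsilon_\mu \bmod 2$ by isolating those pairs $(a, b)$ whose contribution to the inversion count changes. When $d(\lambda) = d(\mu)$ these are exactly the pairs involving $a_0$, and one is left with a signed sum of counts of indices on one side of $a_0$ with a prescribed value of $j(\lambda)_a$. When $d(\lambda) > d(\mu)$---which happens precisely when the removal drops $l(\lambda)$ below a multiple of $N$---one must also track the contribution of the $2N$ ``tail'' positions in $\{N d(\mu) + 1, \ldots, N d(\lambda)\}$, present in $\varepsilon_\lambda$ but not in $\varepsilon_\mu$; the parity of this extra contribution is controlled by the choice of $d$ as the \emph{minimum odd} integer.

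Next I would rewrite the right-hand side using \eqref{eq:SetR} and \eqref{eq:SetA}: $\# A_{\lambda, i, (x, y)}^{l}$ and $\# R_{\mu, i, (x, y)}^{l}$ are the numbers of indices $a < a_0$ with $j_a$ of the appropriate residue modulo $N$ and satisfying the respective neighbor conditions on consecutive $m$'s. The term $v_i(\lambda) - v_{i+1}(\lambda)$ is unpacked via the cell-residue description, combined with the identity
\[
\# A_{\lambda, i} - \# R_{\lambda, i} = v_{i+1}(\lambda) + v_{i-1}(\lambda) - 2 v_i(\lambda) + \delta_{i, 0}
\]
coming from the $\hat{\mathfrak{sl}}_N$ weight formula. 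Matching modulo $2$ is then based on the observation that the discrepancy between the raw $j_a$-count and the constrained $\# A^l$ or $\# R^l$ count consists of indices that pair off between the two types of conditions.

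The hardest step will be the simultaneous handling of the $i = 0$ wrap-around, where the jump $j_{a_0}: 1 \mapsto N$ flips the indicator of many inversion pairs at once, together with the drop $d(\lambda) - d(\mu) = 2$ that can occur when the last row disappears. The minimum-odd-$d$ normalization in the definition of $\varepsilon_\lambda$ is exactly what ensures that these ``many flips'' and the ``vanished tail'' contribute equal amounts modulo $2$, so that the identity holds uniformly across all cases without an ad hoc case split in the final answer.
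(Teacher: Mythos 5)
Your proposal follows essentially the same route as the paper's own proof: isolate the single index $a_0$ where $(j_a,m_a)$ changes, compute $\varepsilon_{\lambda}-\varepsilon_{\mu}$ as the signed count of inversion pairs involving $a_0$, and match it modulo $2$ against $v_i(\lambda)-v_{i+1}(\lambda)$ (evaluated by the telescoping cell-residue count as $\#\{b \leq Nd \mid j_b = i+1\} - d$) plus $\# A_{\lambda,i}^{l}-\# R_{\mu,i}^{l}$ (reduced to raw $j$-counts by pairing off non-addable positions with $j_b=i$ against non-removable ones with $j_{b}=i+1$), the oddness of $d$ absorbing the leftover constant. Your extra care with the $i=0$ wrap-around (where $j_{a_0}$ jumps from $1$ to $N$ and $m_{a_0}$ increments) and with the possible drop of $d$ by $2$ is warranted and goes slightly beyond the published argument, which tacitly treats only the case where $j_{a_0}$ decreases from $i+1$ to $i$ with $d$ unchanged.
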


\begin{proof}
By the assumption there exists a unique $a$ such that $j(\lambda)_a = i+1$, $j(\mu)_a = i$ and $j(\lambda)_b = j(\mu)_b$ if $b \neq a$.
Put $j_b = j(\lambda)_b$ for every $b$.
Then we easily see that
\[
\varepsilon_{\lambda} - \varepsilon_{\mu} = - \# \{ b \mid 1 \leq b < a,\ j_{b} = i\} + \# \{ b \mid a < b \leq Nd,\ j_{b} = i+1\}.
\]
We have
\begin{align*}
v_i(\lambda) - v_{i+1}(\lambda) &= \# \{ b \mid 1 \leq b \leq l(\lambda),\ j_{b} = i+1 \} - \# \{ b \mid 1 \leq b \leq l(\lambda),\ - (b-1) \equiv i+1 \bmod N \} \\
&= \# \{ b \mid 1 \leq b \leq Nd,\ j_{b} = i+1 \} - \# \{ b \mid 1 \leq b \leq Nd,\ - (b-1) \equiv i+1 \bmod N \} \\
&= \# \{ b \mid 1 \leq b \leq Nd,\ j_{b} = i+1 \} - d
\end{align*}
and
\begin{align*}
\# A_{\lambda,i}^{l} - \# R_{\mu,i}^{l} = \# \{ b \mid 1 \leq b < a,\ j_{b} = i \} - \# \{ b \mid 1 \leq b < a,\ j_{b} = i+1 \},
\end{align*}
hence
\begin{align*}
& v_i(\lambda) - v_{i+1}(\lambda) + \# A_{\lambda,i}^{l} - \# R_{\mu,i}^{l} \\
&= \# \{ b \mid 1 \leq b < a,\ j_{b} = i \} + \# \{ b \mid a \leq b \leq Nd,\ j_{b} = i+1 \} - d \\
& \equiv \# \{ b \mid 1 \leq b < a,\ j_{b} = i \} + \# \{ b \mid a < b \leq Nd,\ j_{b} = i+1 \}
\end{align*}
since $d$ is taken to be odd.
\end{proof}

Define
\[
b_{\lambda} = (-1)^{\varepsilon_{\lambda}} b_{\lambda}'.
\]
Then we obtain the following formula by Lemma~\ref{lem:sign}.

\begin{thm}\label{thm:formula}
The action of the affine Yangian $Y_{\varepsilon_1, \varepsilon_2}(\hat{\mathfrak{sl}}_N)$ on the localized equivariant homology $H$ is given by
\begin{align*}
& x_{i,r}^+ b_{\lambda} \\
&= \sum_{(x,y) \in R_{\lambda,i}} \left( \varepsilon_1 x + \varepsilon_2 y \right)^{r} \\
& \displaystyle\prod_{(x',y') \in A_{\lambda,i, (x,y)}^{r}} \dfrac{\varepsilon_1 (x - x' + 1) + \varepsilon_2 (y - y' + 1)}{\varepsilon_1 (x - x') + \varepsilon_2 (y - y')} \displaystyle\prod_{(x',y') \in R_{\lambda \setminus (x,y),i,(x,y)}^{r}} \dfrac{\varepsilon_1 (x - x' - 1) + \varepsilon_2 (y - y' - 1)}{\varepsilon_1 (x - x') + \varepsilon_2 (y - y')} b_{\lambda \setminus (x,y)},
\end{align*}
\begin{align*}
& x_{i,r}^- b_{\mu} \\
&= \sum_{(x,y) \in A_{\mu,i}} \left( \varepsilon_1 x + \varepsilon_2 y \right)^{r} \\
& \displaystyle\prod_{(x',y') \in A_{\mu \cup (x,y),i, (x,y)}^{l}} \dfrac{\varepsilon_1 (x - x' + 1) + \varepsilon_2 (y - y' + 1)}{\varepsilon_1 (x - x') + \varepsilon_2 (y - y')} \displaystyle\prod_{(x',y') \in R_{\mu,i,(x,y)}^{l}} \dfrac{\varepsilon_1 (x - x' - 1) + \varepsilon_2 (y - y' - 1)}{\varepsilon_1 (x - x') + \varepsilon_2 (y - y')} b_{\mu \cup (x,y)},
\end{align*}
\begin{align*}
& h_i(u) b_{\lambda}\\
&= \prod_{(x,y) \in A_{\lambda,i}}\dfrac{u - (\varepsilon_1(x - 1) + \varepsilon_2(y - 1))}{u - (\varepsilon_1 x + \varepsilon_2 y)} \prod_{(x,y) \in R_{\lambda,i}}\dfrac{u - (\varepsilon_1 (x + 1) + \varepsilon_2 (y + 1))}{u - (\varepsilon_1 x + \varepsilon_2  y)} b_{\lambda}.
\end{align*}
\end{thm}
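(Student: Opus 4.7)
The plan is to derive the formula by two successive changes of basis starting from Proposition~\ref{prop:formula}. First, to pass from $[\lambda]$ to $b'_\lambda$, one multiplies each coefficient of $[\mu]$ in $x_{i,r}^+[\lambda]$ by the ratio of the normalizing factors $\prod_{h_\lambda(s)\equiv 0}(\varepsilon_1(l_\lambda(s)+1)-\varepsilon_2 a_\lambda(s))^{-1}$ for $\lambda$ and its $\mu$-analogue. Lemma~\ref{lem:ratio}(ii) expresses this ratio as a product over the ``left'' sets $A^l_{\lambda,i,(x,y)}$ and $R^l_{\mu,i,(x,y)}$ divided by the corresponding products over the full sets $A_{\lambda,i}$ and $R_{\mu,i}$ that appear in the tangent-bundle quotient $e(N_{\mu\lambda})/e(T_\mu)$ of Proposition~\ref{prop:formula}. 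After cancellation, only the ``right'' products survive, together with an explicit sign of the form $(-1)^{v_i(\lambda)-v_{i+1}(\lambda)+\#A^l_{\lambda,i,(x,y)}-\#R^l_{\mu,i,(x,y)}}$; this is the intermediate formula on the $b'$-basis displayed just before Lemma~\ref{lem:sign}.

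Second, passing from $b'_\lambda = (-1)^{-\varepsilon_\lambda} b_\lambda$ to $b_\lambda$ multiplies the coefficient of $b_\mu$ by $(-1)^{\varepsilon_\lambda - \varepsilon_\mu}$. By Lemma~\ref{lem:sign}, this parity is exactly the sign above, so the two cancel and yield the sign-free formula in the statement. The $x_{i,r}^-$ case is entirely symmetric: one exchanges the roles of $\lambda$ and $\mu$ and applies Lemma~\ref{lem:ratio}(ii) in the reverse direction so that the surviving products are now over the ``left'' subsets, and the same sign identity closes the argument. The $h_i(u)$ action is diagonal on $[\lambda]$, hence diagonal on $b_\lambda$ with unchanged eigenvalue, so that formula transports verbatim from Proposition~\ref{prop:formula}.

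The main obstacle is not analytic but combinatorial: all substantive content lies in Lemma~\ref{lem:sign}, which must align two a priori unrelated-looking expressions for a sign (one defined via the total number of inversions of the sequence $j(\lambda)$, the other via local counts around the specific cell being added or removed). The only additional check needed is that $\varepsilon_\lambda$ is independent of the choice of odd integer $d$ with $Nd > l(\lambda)$: replacing $d$ by $d+2$ appends $2N$ indices whose $j$-values form two concatenated copies of $(N, N-1, \ldots, 1)$, and a direct count shows that the resulting change in the number of inversions is even. Once Lemma~\ref{lem:sign} is in hand, the theorem is then a direct substitution with no further calculation.
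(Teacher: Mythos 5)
Your proposal is correct and takes essentially the same route as the paper: Proposition~\ref{prop:formula} combined with Lemma~\ref{lem:ratio}(ii) gives the intermediate formula on the $b'_{\lambda}$-basis with the residual sign $(-1)^{v_i(\lambda)-v_{i+1}(\lambda)+\#A^{l}_{\lambda,i,(x,y)}-\#R^{l}_{\mu,i,(x,y)}}$, and Lemma~\ref{lem:sign} shows that the renormalization $b_{\lambda}=(-1)^{\varepsilon_{\lambda}}b'_{\lambda}$ absorbs it, with the $x^{-}$ and $h_i(u)$ cases handled exactly as you describe. Your extra check that the inversion count $\varepsilon_{\lambda}$ is unchanged mod $2$ when $d$ is replaced by $d+2$ is a worthwhile detail the paper leaves implicit (it is needed since the minimal odd $d$ for $\lambda$ and for $\mu=\lambda\setminus(x,y)$ may differ).
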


\subsection{Isomorphism}

The following is the main result of this paper.

\begin{thm}\label{thm:main2}
The assignment
\[
b_{\lambda} \mapsto P_{\lambda}
\]
gives a $\mathbb{C}(\varepsilon_1, \varepsilon_2)$-linear isometry and an isomorphism of modules over the Yangian $Y_{\hbar}(\mathfrak{sl}_N)$ and the affine Lie algebra $\hat{\mathfrak{sl}}_N$ between the localized equivariant homology $H$ and the Fock space $F$.
\end{thm}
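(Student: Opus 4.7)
The plan is to verify the three asserted properties of the map $\phi \colon b_\lambda \mapsto P_\lambda$---$\mathbb{C}(\varepsilon_1,\varepsilon_2)$-linear isometry, $Y_\hbar(\mathfrak{sl}_N)$-equivariance, and $\hat{\mathfrak{sl}}_N$-equivariance---in turn, exploiting the explicit formulas already derived on both sides together with a final rigidity argument for the affine simple root. For the isometry, both bases $\{b_\lambda\}$ and $\{P_\lambda\}$ are orthogonal for their respective forms, so it suffices to check $\langle b_\lambda, b_\lambda\rangle_H = \langle P_\lambda, P_\lambda\rangle_F$: the norm of $[\lambda]$ is given in the proposition of Subsection~\ref{subsec:torus} from the tangent-space Euler class as $\prod_{h_\lambda(s)\equiv 0\bmod N} \bigl(\varepsilon_1(l_\lambda(s)+1) - \varepsilon_2 a_\lambda(s)\bigr)\bigl(\varepsilon_1 l_\lambda(s) - \varepsilon_2(a_\lambda(s)+1)\bigr)$, and after dividing by the square of the scalar defining $b_\lambda$ (the sign $(-1)^{\varepsilon_\lambda}$ squares out) this reduces precisely to the ratio in the norm formula (\ref{eq:norm}).

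The $Y_\hbar(\mathfrak{sl}_N)$-equivariance follows by term-by-term comparison of Theorem~\ref{thm:main1}, giving the action of $X_{i,r}^\pm$ and $H_i(u)$ on $P_\lambda$, with Theorem~\ref{thm:formula}, giving the action of $x_{i,r}^\pm$ and $h_i(u)$ on $b_\lambda$: the two sets of formulas agree after the substitution $u \mapsto u - \tfrac{1}{2}i(\varepsilon_1 - \varepsilon_2)$, equivalently after replacing the cell eigenvalue $\varepsilon_1 x + \varepsilon_2 y$ on the affine Yangian side by $\varepsilon_1(x+\tfrac{1}{2}i) + \varepsilon_2(y-\tfrac{1}{2}i)$ on the Yangian side. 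This is exactly the embedding of $Y_\hbar(\mathfrak{sl}_N)$ into $Y_{\varepsilon_1,\varepsilon_2}(\hat{\mathfrak{sl}}_N)$ fixed in Remark~\ref{rem:Guay}, so the matrix coefficients of corresponding generators agree identically.

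For the $\hat{\mathfrak{sl}}_N$-equivariance, the Chevalley generators with $i = 1, \ldots, N-1$ lie in $\mathfrak{sl}_N \subset Y_\hbar(\mathfrak{sl}_N)$, so these cases are the $r = 0$ specialization of the Yangian step, combined with the fact (noted after (\ref{eq:wedge-Schur})) that Uglov's Yangian action on $F$ restricts on $\mathfrak{sl}_N$ to the standard affine action. The remaining case of the affine simple root $i = 0$ is the main obstacle, since the standard action on $F$ is prescribed on the Schur basis $\{s_\lambda\}$ rather than on $\{P_\lambda\}$, so no direct formula comparison is available. Here the plan is to argue by rigidity: both $F$ (with standard action) and $H$ (with Varagnolo's action) are irreducible level $1$ $\hat{\mathfrak{sl}}_N$-modules of highest weight $\Lambda_0$, so any $\hat{\mathfrak{sl}}_N$-intertwiner is unique up to scalar, determined by the image of the highest weight vector. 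The map $\phi$ sends $b_\emptyset \mapsto P_\emptyset$, is $\mathfrak{sl}_N$-equivariant, and by the Yangian step already intertwines the Gelfand-Zetlin subalgebra $A_{\hbar'}(\mathfrak{gl}_N)$, whose joint eigenlines in both modules are one-dimensional by Nazarov-Tarasov. Matching these commuting-algebra eigenbases cell-by-cell forces $\phi$ to coincide with the unique $\hat{\mathfrak{sl}}_N$-intertwiner, delivering $i = 0$ compatibility; making this last rigidity step fully precise---in particular ensuring that the uniqueness of the intertwiner propagates through all $Y_\hbar(\mathfrak{sl}_N)$-simple summands $F_{\bf m}$ in the decomposition of $F$---is the delicate point of the argument.
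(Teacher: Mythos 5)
Your first two steps are sound and coincide with the paper's: the isometry is exactly the norm comparison between the Euler-class formula for $\langle[\lambda],[\lambda]\rangle_H$ and (\ref{eq:norm}), and the $Y_{\hbar}(\mathfrak{sl}_N)$-equivariance is the term-by-term match of Theorem~\ref{thm:main1} with Theorem~\ref{thm:formula} under the shift $u\mapsto u-\tfrac{1}{2}i(\varepsilon_1-\varepsilon_2)$ of Remark~\ref{rem:Guay}. The $i=1,\ldots,N-1$ part of the $\hat{\mathfrak{sl}}_N$-statement is likewise the $r=0$ case of that comparison. The problem is your treatment of $i=0$, and the gap you flag as ``the delicate point'' is not a technicality but a genuine circularity. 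Uniqueness of the $\hat{\mathfrak{sl}}_N$-intertwiner $\psi$ between two copies of the basic representation only identifies $\phi$ with $\psi$ if you can verify independently either that $\phi$ is $\hat{\mathfrak{sl}}_N$-equivariant (which is what is being proved) or that $\psi$ preserves the Gelfand--Zetlin eigenlines. The latter does not follow from anything you have: $A_{\hbar'}(\mathfrak{gl}_N)$ does not commute with $X_0^{\pm}$, so there is no reason an $\hat{\mathfrak{sl}}_N$-intertwiner respects its eigenbasis. Conversely, the properties you do establish for $\phi$ ($\mathfrak{sl}_N$-equivariance, eigenline preservation, $b_{\emptyset}\mapsto P_{\emptyset}$, isometry) pin $\phi$ down only up to a sign on each $Y_{\hbar}(\mathfrak{sl}_N)$-simple summand $F_{\bf m}$, and the summands $F_{\bf m}$ are connected to one another only by $X_0^{\pm}$ --- precisely the operator whose compatibility is in question. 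So the rigidity argument cannot close.

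The paper resolves $i=0$ quite differently, and you have overlooked the available tool when you say ``no direct formula comparison is available'': Theorem~\ref{thm:formula} is a formula for $x_{i,r}^{\pm}b_{\lambda}$ for \emph{all} $i\in\mathbb{Z}/N\mathbb{Z}$, including $i=0$, coming from the geometry rather than from the Gelfand--Zetlin machinery. The paper then argues by specialization: the subalgebra $\langle x_{i,0}^{\pm},h_{i,0}\rangle\cong U(\hat{\mathfrak{sl}}_N)$ is defined over $\mathbb{C}[\varepsilon_1,\varepsilon_2]$ and survives the specialization $\varepsilon_1=-\varepsilon_2$ (i.e.\ $\hbar=0$); at that point every matrix coefficient in Theorem~\ref{thm:formula} with $r=0$ degenerates to $1$ (since distinct addable/removable $i$-cells lie on distinct diagonals, each factor $\bigl(\varepsilon_1(x-x'\pm1)+\varepsilon_2(y-y'\pm1)\bigr)/\bigl(\varepsilon_1(x-x')+\varepsilon_2(y-y')\bigr)$ becomes $1$), while $P_{\lambda}$ degenerates to $s_{\lambda}$, so the formulas reproduce the standard action (\ref{eq:affine1})--(\ref{eq:affine3}). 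This is the same device already used in the proof of Theorem~\ref{thm:main1} to fix the signs $c_{\lambda\mu}^{(0)}$. If you want to salvage your outline, the scalar ambiguities $c_{\bf m}$ you identified must be killed by exactly such an explicit $i=0$ computation combined with the degeneration at $\hbar=0$; no amount of abstract uniqueness will substitute for it.
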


\begin{proof}
We have already mentioned the isometry in Subsection~\ref{subsec:torus}.

Recall Remark~\ref{rem:Guay} that the inclusion $Y_{\hbar}(\mathfrak{sl}_{N}) \subset Y_{\varepsilon_1, \varepsilon_2}(\hat{\mathfrak{sl}}_N)$ is given by
\begin{align*}
\hbar \sum_{r \geq 0} X_{i,r}^{\pm} u^{-r-1} &= x_i^{\pm}(u - \dfrac{1}{2}i(\varepsilon_1 - \varepsilon_2)), \\
1 + \hbar \sum_{r \geq 0} H_{i,r} u^{-r-1} &= h_i(u - \dfrac{1}{2}i(\varepsilon_1 - \varepsilon_2)).
\end{align*}
Hence Theorem~\ref{thm:main1} and \ref{thm:formula} imply that the above assignment gives a $Y_{\hbar}(\mathfrak{sl}_N)$-module isomorphism.

To see that this assignment gives an $\hat{\mathfrak{sl}}_N$-module isomorphism, we check that the actions of $x_{i,0}^{\pm}, h_{i,0}$ on the bases of both sides coincide.
We can define the affine Yangian $Y_{\varepsilon_1, \varepsilon_2}(\hat{\mathfrak{sl}}_N)$ over $\mathbb{C}[\varepsilon_1, \varepsilon_2]$ and its subalgebra $\langle x_{i,0}^{\pm}, h_{i,0} \mid i \in \mathbb{Z} / N \mathbb{Z} \rangle$ does not change after the specialization $\varepsilon_1 = - \varepsilon_2$.
Hence a formula for the actions of $x_{i,0}^{\pm}, h_{i,0}$ on the Schur symmetric functions $s_{\lambda}$ is obtained from the formula in Theorem~\ref{thm:formula} by putting $\varepsilon_1 = - \varepsilon_2$.
The result coincides with the original action of $\hat{\mathfrak{sl}}_N$ on the Fock space, as desired.
\end{proof}

\begin{cor}\label{cor:cor}
The actions of the Yangian $Y_{\hbar}(\mathfrak{sl}_N)$ and the affine Lie algebra $\hat{\mathfrak{sl}}_N$ on the Fock space $F$ can be uniquely extended to an action of the affine Yangian $Y_{\varepsilon_1, \varepsilon_2}(\hat{\mathfrak{sl}}_N)$.
\end{cor}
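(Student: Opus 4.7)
The plan is to derive Corollary~\ref{cor:cor} directly from Theorem~\ref{thm:main2}, splitting the argument into an existence part and a uniqueness part. Existence will come from transport of structure, while uniqueness will reduce to a generation statement inside the affine Yangian.

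First, for the existence of the extension, I would invoke Theorem~\ref{thm:Varagnolo}, which endows $H$ with an action of $Y_{\varepsilon_1,\varepsilon_2}(\hat{\mathfrak{sl}}_N)$, and transport this action to $F$ along the $\mathbb{C}(\varepsilon_1,\varepsilon_2)$-linear bijection $b_\lambda \mapsto P_\lambda$ of Theorem~\ref{thm:main2}. Because that bijection is an intertwiner both for $Y_{\hbar}(\mathfrak{sl}_N)$ and for $\hat{\mathfrak{sl}}_N$ by the same theorem, the transported action on $F$ automatically restricts to the two prescribed actions, giving the desired extension.

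For the uniqueness part, I would verify that $Y_{\varepsilon_1,\varepsilon_2}(\hat{\mathfrak{sl}}_N)$ is generated as an algebra by the union of its subalgebras $Y_{\hbar}(\mathfrak{sl}_N)$ (in the embedding of Remark~\ref{rem:Guay}) and $\hat{\mathfrak{sl}}_N$. Together these two subalgebras contain $x_{i,r}^{\pm}$ and $h_{i,r}$ for all $i \neq 0$ and all $r \geq 0$, together with $x_{0,0}^{\pm}$ and $h_{0,0}$, so the task reduces to producing $x_{0,r}^{\pm}$ and $h_{0,r}$ for $r \geq 1$. For $N \geq 3$, I would combine relation~(\ref{eq:6}) at $i = 1$ with the identity $[h_{1,0}, x_{0,s+1}^{+}] = - x_{0,s+1}^{+}$ coming from (\ref{eq:3}) to express $x_{0,s+1}^{+}$ inductively in terms of $x_{0,s}^{+}$ and elements of $Y_{\hbar}(\mathfrak{sl}_N)$; the analogous manipulation of (\ref{eq:8}) produces $x_{0,r}^{-}$, and then $h_{0,r}$ arises from (\ref{eq:2}) as $[x_{0,r}^{+}, x_{0,0}^{-}]$. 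The case $N = 2$ is handled in the same spirit with relation~(\ref{eq:17}) in place of (\ref{eq:6})--(\ref{eq:9}). Once this generation statement is in hand, any two extensions must agree on generators and hence coincide.

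The only mildly delicate point is verifying at each inductive step that the coefficient of $x_{0,s+1}^{\pm}$ in the relevant defining relation is nonzero; this is automatic because that coefficient is $\pm 1$ (read off from (\ref{eq:3}) and its sign variants), so no real obstacle arises and the extension is uniquely determined.
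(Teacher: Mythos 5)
Your argument is essentially the paper's own proof: existence is obtained by transporting the $Y_{\varepsilon_1,\varepsilon_2}(\hat{\mathfrak{sl}}_N)$-action on $H$ (Theorem~\ref{thm:Varagnolo}) to $F$ through the isomorphism of Theorem~\ref{thm:main2}, and uniqueness is reduced to the statement that $Y_{\varepsilon_1,\varepsilon_2}(\hat{\mathfrak{sl}}_N)$ is generated by the subalgebras $\langle x_{i,r}^{\pm}, h_{i,r} \mid i \neq 0,\ r \in \mathbb{Z}_{\geq 0}\rangle$ and $\langle x_{i,0}^{\pm}, h_{i,0} \mid i \in \mathbb{Z}/N\mathbb{Z}\rangle$, which the paper simply asserts as a fact.

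The only point worth flagging concerns the extra detail you supply for that generation statement. Your induction via relations (\ref{eq:6}), (\ref{eq:8}) and (\ref{eq:3}) is correct for $N \geq 3$: the coefficient $a_{1,0} = -1$ lets you solve for $x_{0,s+1}^{\pm}$ at each step. For $N = 2$, however, relation (\ref{eq:17}) is a \emph{second-order} recursion in the degree: isolating the term $[h_{1,0}, x_{0,s+2}^{\pm}] = \mp 2\, x_{0,s+2}^{\pm}$ expresses $x_{0,s+2}^{\pm}$ in terms of both $x_{0,s}^{\pm}$ and $x_{0,s+1}^{\pm}$ (the latter appearing inside $[h_{1,1}, x_{0,s+1}^{\pm}]$ and the anticommutator with $h_{1,0}$, from which it cannot be isolated). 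So the induction needs the two seeds $x_{0,0}^{\pm}$ and $x_{0,1}^{\pm}$, and producing $x_{0,1}^{\pm}$ from the two given subalgebras requires an additional argument that your sketch does not provide. Since the paper itself does not prove the generation fact, this does not put you behind its proof, but if you intend to justify that fact you should either restrict the claim to $N \geq 3$ or supply the missing step for $N = 2$.
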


\begin{proof}
The actions of $Y_{\hbar}(\mathfrak{sl}_N)$ and $\hat{\mathfrak{sl}}_N$ can be extended to that of $Y_{\varepsilon_1, \varepsilon_2}(\hat{\mathfrak{sl}}_N)$ by Theorem~\ref{thm:main2}.
Uniqueness follows from the fact that $Y_{\varepsilon_1, \varepsilon_2}(\hat{\mathfrak{sl}}_N)$ is generated by its subalgebras 
$\langle x_{i,r}^{\pm}, h_{i,r} \mid i \neq 0, r \in \mathbb{Z}_{\geq 0} \rangle$ and 
$\langle x_{i,0}^{\pm}, h_{i,0} \mid i \in \mathbb{Z} / N \mathbb{Z} \rangle$.
\end{proof}

\appendix
\section*{Appendix}

This appendix is devoted to give a sketch of the proof of Theorem~\ref{thm:Varagnolo} for the case $N=2$ following arguments in \cite[Section~5]{MR1818101}.
In \cite[Section~4 Remark]{MR1818101}, relations for the Yangian associated with a general symmetric Kac-Moody Lie algebra are given.
Since a $T$-action used there is different from ours, those relations for $\hat{\mathfrak{sl}}_2$ do not coincide with ours even if the parameters are specialized to $\varepsilon_1 = \varepsilon_2$.

\begin{proof}
Modifications are needed only for the relations (\ref{eq:17}) and (\ref{eq:18}).
Thus we put $k=i$, $l=i+1$ in \cite[Section~5]{MR1818101}.
We prove only the $+$-cases.

For (\ref{eq:17}), we take ${\bf v}^1, {\bf v}^2$ such that ${\bf v}^2 = {\bf v}^1 + \alpha_{i+1}$.
Set
\[
c_{i+1} = c_1(\mathcal{L}_{i+1}({\bf v}^2))
\]
and
\begin{align*}
X &= c_{-1/u}\left( ( \mathcal{C}_i({\bf v}^1) - t_1 t_2 \mathcal{C}_i({\bf v}^1)) - ( \mathcal{C}_i({\bf v}^2) - t_1 t_2 \mathcal{C}_i({\bf v}^2) ) \right) \\
&= c_{-1/u}\left( (t_1 + t_2 - t_1^{-1} - t_2^{-1}) \mathcal{L}_{i+1}({\bf v}^2) \right) \\
&= \dfrac{(1 - u^{-1}( c_{i+1} + \varepsilon_1 ))(1 - u^{-1}( c_{i+1} + \varepsilon_2 ))}{(1 - u^{-1}( c_{i+1} - \varepsilon_1 ))(1 - u^{-1}( c_{i+1} - \varepsilon_2 ))}.
\end{align*}
Then, by an argument similar to the one given in \cite[p.\ 278 Relation (1.2)]{MR1818101}, a proof of the relation~(\ref{eq:17}) is reduced to the identity
\[
(X - 1)(u - c_{i+1})^2 + (\varepsilon_1 + \varepsilon_2)(X + 1)(u - c_{i+1}) + \varepsilon_1 \varepsilon_2 (X - 1) = 0.
\]
This identity can be checked in a straightforward way.

For (\ref{eq:18}), we take ${\bf v}^1, {\bf v}^3$ such that ${\bf v}^3 = {\bf v}^1 + \alpha_{i} + \alpha_{i+1}$.
We have the corresponding vector bundles $\mathcal{V}_{i}^1$, $\mathcal{V}_{i+1}^1$, $\mathcal{V}_{i}^3$, and $\mathcal{V}_{i+1}^3$.
Set
\begin{align*}
\mathcal{E}_{i,i+1} &= (t_1 + t_2) \Hom(\mathcal{V}_{i+1}^3 / \mathcal{V}_{i+1}^1, \mathcal{V}_{i}^3 / \mathcal{V}_{i}^1), \\
\mathcal{E}_{i+1,i} &= (t_1 + t_2) \Hom(\mathcal{V}_{i}^3 / \mathcal{V}_{i}^1, \mathcal{V}_{i+1}^3 / \mathcal{V}_{i+1}^1).
\end{align*}
By an argument similar to the one given in \cite[p.\ 279 Relation (1.4) with $k \neq l$]{MR1818101}, we have
\[
c_1(\mathcal{E}_{i,i+1}) x_{i,r}^+ x_{i+1,s}^+ = c_1(\mathcal{E}_{i+1,i}) x_{i+1,s}^+ x_{i,r}^+.
\]
Put
\[
c_{i} = c_1(\mathcal{V}_i^3 / \mathcal{V}_i^1), \ c_{i+1} = c_1(\mathcal{V}_{i+1}^3 / \mathcal{V}_{i+1}^1).
\]
Then we obtain the identity
\begin{align*}
( c_{i} - c_{i+1} + \varepsilon_1 )( c_{i} - c_{i+1} + \varepsilon_2 ) x_{i,r}^+ x_{i+1,s}^+ = ( c_{i+1} - c_{i} + \varepsilon_1 )( c_{i+1} - c_{i} + \varepsilon_2 ) x_{i+1,s}^+ x_{i,r}^+.
\end{align*}
This is exactly the relation (\ref{eq:18}).
\end{proof}

\def\cprime{$'$}
\providecommand{\bysame}{\leavevmode\hbox to3em{\hrulefill}\thinspace}
\providecommand{\MR}{\relax\ifhmode\unskip\space\fi MR }
\providecommand{\MRhref}[2]{%
  \href{http://www.ams.org/mathscinet-getitem?mr=#1}{#2}
}
\providecommand{\href}[2]{#2}


\begin{thebibliography}{FFNR}

\bibitem[A]{MR1706920}
Tomoyuki Arakawa, \emph{Drinfeld functor and finite-dimensional representations  of {Y}angian}, Comm. Math. Phys. \textbf{205} (1999), no.~1, 1--18.

\bibitem[BBT]{MR3046746}
Alexander~A. Belavin, Mikhail~A. Bershtein, and Grigory~M. Tarnopolsky, \emph{Bases in coset conformal field theory from {AGT} correspondence and {M}acdonald polynomials at the roots of unity}, J. High Energy Phys. (2013), no.~3, 019, front matter+35pp.

\bibitem[BL]{MR1310291}
Svetlana I. Boyarchenko and Sergei Z. Levendorski{\u\i}, \emph{On affine {Y}angians}, Lett. Math. Phys. \textbf{32} (1994), no.~4, 269--274.

\bibitem[D1]{MR802128}
Vladimir~G. Drinfel{\cprime}d, \emph{Hopf algebras and the quantum {Y}ang-{B}axter equation}, Soviet Math. Dokl. \textbf{32} (1985), no.~1, 254--258.

\bibitem[D2]{MR831053}
\bysame, \emph{Degenerate affine {H}ecke algebras and {Y}angians}, Functional Anal. Appl. \textbf{20} (1986), no.~1, 62--64.

\bibitem[D3]{MR914215}
\bysame, \emph{A new realization of {Y}angians and of quantum affine algebras}, Soviet Math. Dokl. \textbf{36} (1988), no.~2, 212--216.

\bibitem[FFNR]{MR2827177}
Boris Feigin, Michael Finkelberg, Andrei Negut, and Leonid Rybnikov, \emph{Yangians and cohomology rings of {L}aumon spaces}, Selecta Math. (N.S.) \textbf{17} (2011), no.~3, 573--607.

\bibitem[G1]{MR2199856}
Nicolas Guay, \emph{Cherednik algebras and {Y}angians}, Int. Math. Res. Not. (2005), no.~57, 3551--3593.

\bibitem[G2]{MR2323534}
\bysame, \emph{Affine {Y}angians and deformed double current algebras in type {A}}, Adv. Math. \textbf{211} (2007), no.~2, 436--484.

\bibitem[KN]{MR1075769}
Peter~B. Kronheimer and Hiraku Nakajima, \emph{Yang-{M}ills instantons on {ALE} gravitational instantons}, Math. Ann. \textbf{288} (1990), no.~2, 263--307.

\bibitem[Ma]{MR1354144}
Ian G. Macdonald, \emph{Symmetric functions and {H}all polynomials}, second ed., Oxford Mathematical Monographs, The Clarendon Press, Oxford University Press, New York, 1995, With contributions by A. Zelevinsky, Oxford Science Publications.
\bibitem[Mo]{MR2355506}
Alexander Molev, \emph{Yangians and classical {L}ie algebras}, Mathematical Surveys and Monographs, vol. 143, American Mathematical Society, Providence, RI, 2007.

\bibitem[Nag1]{MR2583334}
Kentaro Nagao, \emph{{$K$}-theory of quiver varieties, {$q$}-{F}ock space and nonsymmetric {M}acdonald polynomials}, Osaka J. Math. \textbf{46} (2009), no.~3, 877--907.

\bibitem[Nag2]{MR2517812}
\bysame, \emph{Quiver varieties and {F}renkel-{K}ac construction}, J. Algebra \textbf{321} (2009), no.~12, 3764--3789.

\bibitem[Nak1]{MR1302318}
Hiraku Nakajima, \emph{Instantons on {ALE} spaces, quiver varieties, and {K}ac-{M}oody algebras}, Duke Math. J. \textbf{76} (1994), no.~2, 365--416.

\bibitem[Nak2]{MR1604167}
\bysame, \emph{Quiver varieties and {K}ac-{M}oody algebras}, Duke Math. J. \textbf{91} (1998), no.~3, 515--560.

\bibitem[Nak3]{MR2784748}
\bysame, \emph{Quiver varieties and cluster algebras}, Kyoto J. Math. \textbf{51} (2011), no.~1, 71--126.

\bibitem[NT]{MR1605817}
Maxim Nazarov and Vitaly Tarasov, \emph{Representations of {Y}angians with {G}elfand-{Z}etlin bases}, J. Reine Angew. Math. \textbf{496} (1998), 181--212.

\bibitem[O]{MR1805058}
Eric~M. Opdam, \emph{Lecture notes on {D}unkl operators for real and complex reflection groups}, MSJ Memoirs, vol.~8, Math. Soc. Japan, Tokyo, 2000.

\bibitem[STU]{MR1603798}
Yoshihisa Saito, Kouichi Takemura, and Denis Uglov, \emph{Toroidal actions on level {$1$} modules of {$U_q(\widehat{\mathfrak{sl}}_n)$}}, Transform. Groups \textbf{3} (1998), no.~1, 75--102.

\bibitem[TU1]{MR1457972}
Kouichi Takemura and Denis Uglov, \emph{The orthogonal eigenbasis and norms of eigenvectors in the spin {C}alogero-{S}utherland model}, J. Phys. A \textbf{30} (1997), no.~10, 3685--3717.

\bibitem[TU2]{MR1600311}
\bysame, \emph{Level-{$0$} action of {$U_q(\widehat{\mathfrak{sl}}_n)$} on the {$q$}-deformed {F}ock spaces}, Comm. Math. Phys. \textbf{190} (1998), no.~3, 549--583.

\bibitem[U1]{MR1608527}
Denis Uglov, \emph{Yangian {G}elfand-{Z}etlin bases, {$\mathfrak{gl}_N$}-{J}ack polynomials and computation of dynamical correlation functions in the spin {C}alogero-{S}utherland model}, Comm. Math. Phys. \textbf{191} (1998), no.~3, 663--696.

\bibitem[U2]{MR1724950}
\bysame, \emph{Symmetric functions and the {Y}angian decomposition of the {F}ock and basic modules of the affine {L}ie algebra {$\hat{\mathfrak{sl}}_N$}}, Quantum many-body problems and representation theory, MSJ Memoirs, vol.~1, Math. Soc. Japan, Tokyo, 1998, pp.~183--241.

\bibitem[V]{MR1818101}
Michela Varagnolo, \emph{Quiver varieties and {Y}angians}, Lett. Math. Phys. \textbf{53} (2000), no.~4, 273--283.

\bibitem[VV1]{MR1626481}
Michela Varagnolo and Eric Vasserot, \emph{Double-loop algebras and the {F}ock space}, Invent. Math. \textbf{133} (1998), no.~1, 133--159.

\bibitem[VV2]{MR1722361}
\bysame, \emph{On the {$K$}-theory of the cyclic quiver variety}, Int. Math. Res. Not. (1999), no.~18, 1005--1028.
\end{thebibliography}

\end{document}